\documentclass[12pt,reqno]{amsart}
\usepackage{fullpage}
\usepackage{times}
\usepackage{amsmath,amssymb,amsthm,url}
\usepackage[utf8]{inputenc}
\usepackage[english]{babel}
\usepackage{bm}
\usepackage{graphicx}

\newtheorem{theorem}{Theorem}[section]
\newtheorem{lemma}[theorem]{Lemma}
\newtheorem{corollary}[theorem]{Corollary}
\newtheorem{proposition}[theorem]{Proposition}

\theoremstyle{definition}
\newtheorem{definition}[theorem]{Definition}

\newcommand{\B}{{\mathcal B}}

\newcommand{\N}{{\mathbb N}}

\newcommand{\R}{{\mathbb R}}

\newcommand{\Z}{{\mathbb Z}}

\newcommand{\ord}{\mathop{\rm ord}}

\newcommand{\ba}{{\bm a}}

\newcommand{\balpha}{{\bm\alpha}}

\renewcommand{\mod}[1]{{\ifmmode\text{\rm\ (mod~$#1$)}\else\discretionary{}{}{\hbox{ }}\rm(mod~$#1$)\fi}}
\newcommand{\ep}{\varepsilon}
\newsymbol\dnd 232D
\newcommand{\li}{\mathop{\rm li}}

\setcounter{tocdepth}{3}

\begin{document}

\title{Counting multiplicative groups with prescribed subgroups}
\author{Jenna Downey}
\address{Department of Mathematics and Statistics \\ C245 (Science Building) \\ 1000 KLO Road \\ Kelowna, BC, Canada \ V1Y 4X8}
\email{jdowney@okanagan.bc.ca}
\author{Greg Martin}
\address{Department of Mathematics \\ University of British Columbia \\ Room 121, 1984 Mathematics Road \\ Vancouver, BC, Canada \ V6T 1Z2}
\email{gerg@math.ubc.ca}
\subjclass[2010]{11N25, 11N37, 11N45, 11N64, 20K01}
\maketitle

\begin{abstract}
We examine two counting problems that seem very group-theoretic on the surface but, on closer examination, turn out to concern integers with restrictions on their prime factors.

First, given an odd prime~$q$ and a finite abelian $q$-group $H$, we consider the set of integers~$n\le x$ such that the Sylow $q$-subgroup of the multiplicative group $(\Z/n\Z)^\times$ is isomorphic to~$H$. We show that the counting function of this set of integers is asymptotic to $K x(\log\log x)^\ell/(\log x)^{1/(q-1)}$ for explicit constants~$K$ and~$\ell$ depending on~$q$ and~$H$.

Second, we consider the set of integers $n\le x$ such that the multiplicative group $(\Z/n\Z)^\times$ is ``maximally non-cyclic'', that is, such that all of its prime-power subgroups are elementary groups. We show that the counting function of this set of integers is asymptotic to $A x/(\log x)^{1-\xi}$ for an explicit constant~$A$, where~$\xi$ is Artin's constant.

As it turns out, both of these group-theoretic problems can be reduced to problems of counting integers with restrictions on their prime factors, allowing them to be addressed by classical techniques of analytic number theory.
\end{abstract}

\renewcommand{\labelenumi}{(\alph{enumi})}

\section{Introduction}

Counting problems for integers with restrictions on their prime factors have been a topic of interest to number theorists for many years, with squarefree numbers and friable numbers (integers without large prime factors) being typical examples. Particularly relevant is Landau's investigation~\cite{Landau} of the counting function of those integers~$n$ expressible as the sum of two squares; thanks to the classical characterization of Fermat, this property is equivalent to a restriction on the prime factors of~$n$ that are congruent to~$3\mod4$. Similar ideas allowed Ford, Luca, and Moree~\cite{FordLucaMoree} to count the integers $n\le x$ such that the Euler totient function $\phi(n)$ is not divisible by a fixed prime~$q$, or equivalently such that~$q^2$ does not divide~$n$ and no prime divisor of~$n$ is congruent to~$1\mod q$.  

This last result can be restated as counting the integers~$n$ up to~$x$ for which the Sylow $q$-subgroup of $(\Z/n\Z)^\times$ is the trivial group. And indeed, examining group-theoretic statistics of the family of multiplicative groups $(\Z/n\Z)^\times$ is a fertile source of problems of interest to analytic number theorists, starting directly with the distribution of the cardinality $\phi(n)$. For example, the length of the invariant factor decomposition of $(\Z/n\Z)^\times$ (see Section~\ref{mnc section} for the definition) is essentially the number of distinct prime factors of~$n$, and so these lengths satisfy an Erd\H os--Kac law (they are asymptotically normally distributed when suitably normalized). Recent work of the second author with Chang~\cite{CM} and Troupe~\cite{MartinTroupe} examined, respectively, the counting function of those integers with a prescribed least invariant factor and an Erd\H os--Kac law for the total number of subgroups of~$(\Z/n\Z)^\times$.

When speaking on the work~\cite{MartinTroupe} at the 2017 Alberta Number Theory Days, Lee Troupe was asked by Colin Weir if it was possible to count, for a fixed prime~$q$ and a fixed finite abelian $q$-group~$H$, the number of integers $n\le x$ for which the Sylow $q$-subgroup of $(\Z/n\Z)^\times$ is isomorphic to~$H$, thus generalizing the aforementioned result of Ford, Luca, and Moree. Answering this question is the main focus of this paper; we establish some notation to describe our results.

\begin{definition}
Let $\Z_n=(\Z/n\Z)^+$ and $\Z_n^\times=(\Z/n\Z)^\times$ denote the additive group and multiplicative group, respectively, of the quotient ring $\Z/n\Z$. For any prime~$q$, let $G_q(n)$ denote the Sylow $q$-subgroup of $\Z_n^\times$, that is, the unique subgroup of $\Z_n^\times$ whose cardinality is the highest power of~$q$ that divides~$\phi(n)$.
\end{definition}

\begin{definition} \label{DHx def}
For any finite abelian $q$-group $H$, let $D(H,x)={\#\{n\le x \colon G_q(n)\cong H\}}$.
\end{definition}

Because our asymptotic formula for $D(H,x)$ will depend upon~$H$, we need a standard notation for isomorphism classes of finite abelian $q$-groups.
From the classification of finite abelian groups, every abelian group of prime-power order can be labeled by a partition (a nonincreasing sequence of positive integers).

\begin{definition}
Given a partition $\balpha=(\alpha_1,\dots,\alpha_j)$, denote its length by $\ell(\balpha)=j$. Let $\Z_{q^\balpha}$ denote the finite abelian $q$-group $\Z_{q^{\alpha_1}}\times\Z_{q^{\alpha_2}}\times\dots\times\Z_{q^{\alpha_j}}$.
\end{definition}

The following theorem, which we prove in Section~\ref{DHx section}, gives an asymptotic formula for the number of integers $n\le x$ for which the Sylow $q$-subgroup of $\Z_n^\times$ is isomorphic to any particular finite abelian $q$-group $\Z_{q^\balpha}$.

\begin{theorem} \label{main}
Let $q$ be an odd prime and $\balpha$ a partition. Then
\[
D(\Z_{q^\balpha},x)=K(\Z_{q^\balpha}) \frac{x(\log\log x)^{\ell(\balpha)}}{(\log x)^{1/(q-1)}} \bigg( 1+O_{q,\balpha}\bigg(\frac1{\log\log x} \bigg) \bigg),
\]
where $K(\Z_{q^\balpha})$ is the constant from Definition~\ref{Eqalpha def} below.
\end{theorem}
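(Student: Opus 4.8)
The plan is to translate the group-theoretic condition $G_q(n)\cong\Z_{q^\balpha}$ into a restriction on the prime factors of~$n$ and then evaluate the resulting count by a convolution argument built on a Landau--Selberg--Delange estimate, of the kind underlying Landau's count~\cite{Landau} of sums of two squares and the theorem of Ford, Luca, and Moree~\cite{FordLucaMoree}.

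\emph{Structural reduction.} Writing $n=\prod_p p^{e_p}$, the Chinese remainder theorem gives $\Z_n^\times\cong\prod_p(\Z/p^{e_p}\Z)^\times$; since~$q$ is odd, the Sylow $q$-subgroup of the cyclic group $(\Z/p^{e_p}\Z)^\times$, of order $p^{e_p-1}(p-1)$, is cyclic of order $q^{v_q(p-1)}$ when $p\ne q$ and of order $q^{e_q-1}$ when $p=q$. Hence $G_q(n)$ depends only on the multiset consisting of $v_q(p-1)$ for each prime $p\mid n$ with $p\equiv1\mod q$, together with the extra element $v_q(n)-1$ when $q^2\mid n$; and $G_q(n)\cong\Z_{q^\balpha}$ precisely when the positive elements of this multiset, listed in nonincreasing order, form~$\balpha$. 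I would then factor $n=sm$, where~$s$ is the product of the prime powers $p^{e_p}\|n$ with $p\equiv1\mod q$ (so automatically $q\nmid s$) and~$m$ collects the remaining prime powers; thus~$m$ has no prime factor $\equiv1\mod q$, and $\gcd(s,m)=1$. Separating the cases $v_q(m)\le1$ and $v_q(m)\ge2$ writes $D(\Z_{q^\balpha},x)=D^{(0)}(x)+D^{(1)}(x)$, where in $D^{(0)}(x)$ the factor~$s$ is \emph{admissible}, meaning $\{v_q(p-1):p\mid s\}=\balpha$ as multisets, and in $D^{(1)}(x)$ the element $v_q(n)-1\ge1$ supplies one part of~$\balpha$ while~$s$ realizes the remaining parts.

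\emph{The main term.} Here $D^{(0)}(x)=\sum_{s\ \mathrm{admissible}}\mathcal G(x/s)$, where $\mathcal G(z)=\#\{m\le z:\ p\mid m\Rightarrow p\not\equiv1\mod q,\ v_q(m)\le1\}$; when $\balpha$ is empty this is exactly the count of Ford, Luca, and Moree. The primes $\not\equiv1\mod q$ have relative density $\tfrac{q-2}{q-1}$, so the Landau--Selberg--Delange method gives $\mathcal G(z)=C_0\,z(\log z)^{-1/(q-1)}\big(1+O(1/\log z)\big)$ for an explicit constant~$C_0$. On the other side, grouping the primes dividing~$s$ according to their common value of $v_q(p-1)$ and applying Mertens' theorem in arithmetic progressions, which gives $\sum_{p\le t,\ v_q(p-1)=\beta}\frac1{p-1}=q^{-\beta}\log\log t+O(1)$, one obtains, for $\balpha$ having distinct parts $\beta_1>\cdots>\beta_r$ of multiplicities $k_1,\dots,k_r$,
\[
A(t):=\sum_{\substack{s\le t\\ s\ \mathrm{admissible}}}\frac1s=c_\balpha(\log\log t)^{\ell(\balpha)}+O\big((\log\log t)^{\ell(\balpha)-1}\big),\qquad c_\balpha:=\prod_{i=1}^{r}\frac1{k_i!\,q^{\beta_i k_i}},
\]
where the lower-order term absorbs the diagonal contributions in which two of the chosen primes coincide. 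Inserting the asymptotic for~$\mathcal G$ into the convolution and evaluating the remaining sum by partial summation against $A(t)$ leads to
\[
D^{(0)}(x)=C_0\,x\sum_{\substack{s\le x\\ s\ \mathrm{admissible}}}\frac{1+o(1)}{s\,(\log(x/s))^{1/(q-1)}}=C_0\,c_\balpha\,\frac{x(\log\log x)^{\ell(\balpha)}}{(\log x)^{1/(q-1)}}\Big(1+O\Big(\tfrac1{\log\log x}\Big)\Big),
\]
and comparing constants identifies $C_0c_\balpha$ with $K(\Z_{q^\balpha})$ from Definition~\ref{Eqalpha def}.

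\emph{The error term and the main obstacle.} For $D^{(1)}(x)$ one slot of~$\balpha$ is filled by the \emph{fixed} prime~$q$ (after pulling the full power of~$q$ out of~$m$) rather than by a variable special prime, so the governing sum over~$s$ ranges over integers realizing a partition of length $\ell(\balpha)-1$; losing one factor of $\log\log x$, the argument above gives $D^{(1)}(x)=O\big(x(\log\log x)^{\ell(\balpha)-1}(\log x)^{-1/(q-1)}\big)$, which is absorbed into the error term of Theorem~\ref{main}. The technical heart of the proof is the uniform treatment of the convolution $\sum_s\mathcal G(x/s)$ over the whole range $1\le s\le x$: because for~$s$ close to~$x$ the factor $\log(x/s)$ is far from $\log x$, the main term does not emerge from a term-by-term replacement $\log(x/s)\mapsto\log x$, but only after a partial summation in which a large boundary term at $s\approx x$ very nearly cancels a large part of the integral $\int A(t)\,d\big[(\log(x/t))^{-1/(q-1)}\big]$; one must then verify that every surviving error term --- from the approximations to~$\mathcal G$ and to~$A(t)$, and from replacing $\log\log t$ by $\log\log x$ inside that integral --- is $O\big(x(\log\log x)^{\ell(\balpha)-1}(\log x)^{-1/(q-1)}\big)$.
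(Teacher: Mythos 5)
Your structural reduction, constants, and overall plan agree with the paper's, and the convolution route you describe (writing $n=sm$, applying a Selberg--Delange asymptotic to the $m$-count, and summing over admissible $s$) is a genuinely different organization of the argument from the paper's nested sums over individual primes $p_1,\dots,p_j$. The constant bookkeeping also checks out: your $c_\balpha=\prod_i\frac1{k_i!\,q^{\beta_i k_i}}$ equals $C(\balpha)/q^{\sum\alpha_i}$, and since your $\mathcal G$ allows $v_q(m)\le1$ rather than $v_q(m)=0$, your $C_0$ is $B_q(1+1/q)$, so $C_0c_\balpha$ does match $K(\Z_{q^\balpha})$ from Definition~\ref{Eqalpha def} (which, in the paper, arises instead from adding $D_0(x)+D_0(x/q)$).

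However, there is a genuine gap at exactly the point you flag as the ``technical heart,'' and the form of $A(t)$ you state is not strong enough to close it. With only
\[
A(t)=c_\balpha(\log\log t)^{\ell(\balpha)}+O\big((\log\log t)^{\ell(\balpha)-1}\big),
\]
the best that partial summation yields for the upper range (say $\sqrt x<s\le x/2$) is
\[
\sum_{\substack{\sqrt x<s\le x/2\\ s\ \mathrm{adm}}}\frac1{s(\log(x/s))^{1/(q-1)}}\ll(\log\log x)^{\ell(\balpha)-1}:
\]
after the boundary term cancels the leading part of $\int A(t)\,d[(\log(x/t))^{-1/(q-1)}]$, the surviving contribution is controlled only by the supremum of the error in $A$, which is $(\log\log x)^{\ell(\balpha)-1}$. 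But $(\log\log x)^{\ell-1}$ is \emph{larger} than the main term $(\log\log x)^{\ell}/(\log x)^{1/(q-1)}$ (since $(\log x)^{1/(q-1)}\gg\log\log x$), so this bound is useless — it would not even establish the order of magnitude, let alone the relative error $O(1/\log\log x)$. To get the needed $\ll(\log\log x)^{\ell-1}(\log x)^{-1/(q-1)}$, you must replace your $A(t)$ by a sharper expansion of the form $A(t)=P(\log\log t)+O\big((\log\log t)^{\ell-1}/\log t\big)$ with $P$ a full degree-$\ell$ polynomial, which in turn requires the Mertens estimate for $\sum_{p\le t,\,\nu_q(p-1)=\beta}1/p$ with the power-saving error $O(1/\log t)$ rather than the $O(1)$ you cite, and requires carefully accounting for the lower-degree terms of $P$ rather than sweeping them into the error. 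With that refinement the error part of the partial summation telescopes to $\ll(\log\log x)^{\ell-1}/\log x$, and the main polynomial part differences to $\ll(\log\log x)^{\ell-1}\int_{\sqrt x}^{x/2}\frac{dt}{t\log t\,(\log(x/t))^{1/(q-1)}}\ll(\log\log x)^{\ell-1}(\log x)^{-1/(q-1)}$; this last integral bound is itself nontrivial and is precisely what the paper proves in Lemmas~\ref{indefint}--\ref{uint} via the hypergeometric antiderivative $H_\gamma$. The paper's nested-sum architecture (Propositions~\ref{recsum} and~\ref{recarg}) sidesteps the need for a global expansion of $A(t)$ by handling one prime at a time, applying the sharper Mertens/$H_\gamma$ estimates at each level; if you retain the convolution framing, you need an equivalent amount of precision in $A(t)$.
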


This result does in fact cover the case where $\balpha$ is the empty partition, so that $\Z_{q^\balpha}$ is the trivial group; consequently, one special case of Theorem~\ref{main} is the result of Ford, Luca, and Moree~\cite{FordLucaMoree} mentioned above.

\begin{corollary} \label{main cor}
Let $q$ be an odd prime. The number of $n\le x$ for which $q\nmid \phi(n)$ equals
\[
D(\Z_{q^{\emptyset}},x) = \frac{B_q x}{(\log x)^{1/(q-1)}} \bigg( 1+O_q \bigg(\frac1{\log\log x} \bigg) \bigg),
\]
where $B_q$ is the constant depending on~$q$ from Definition~\ref{Bq def} below.
\end{corollary}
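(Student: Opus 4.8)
The plan is to derive this as the special case of Theorem~\ref{main} in which $\balpha$ is the empty partition $\emptyset$. First I would observe that the group-theoretic condition $q \nmid \phi(n)$ is equivalent to the statement that the Sylow $q$-subgroup $G_q(n)$ is trivial, which in the partition notation of the excerpt means precisely that $G_q(n) \cong \Z_{q^{\emptyset}}$. Thus $\#\{n\le x\colon q\nmid\phi(n)\} = D(\Z_{q^{\emptyset}},x)$ by Definition~\ref{DHx def}, and it remains only to specialize the asymptotic formula.

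Next I would substitute $\balpha = \emptyset$ into the statement of Theorem~\ref{main}. Since $\ell(\emptyset) = 0$, the factor $(\log\log x)^{\ell(\balpha)}$ becomes $(\log\log x)^0 = 1$, so the main term collapses from $K(\Z_{q^\balpha}) x(\log\log x)^{\ell(\balpha)}/(\log x)^{1/(q-1)}$ down to $K(\Z_{q^{\emptyset}}) x/(\log x)^{1/(q-1)}$. Setting $B_q := K(\Z_{q^{\emptyset}})$ (matching it to the constant named in Definition~\ref{Bq def}, which I expect is defined exactly as this specialization of $K(\Z_{q^\balpha})$ from Definition~\ref{Eqalpha def}) gives the claimed main term. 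The error term $O_{q,\balpha}(1/\log\log x)$ becomes $O_q(1/\log\log x)$ since the implied constant now depends only on $q$, the partition having been fixed as empty.

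Finally I would record that this recovers the theorem of Ford, Luca, and Moree~\cite{FordLucaMoree}, as already noted in the excerpt, providing a consistency check: their characterization says $q\nmid\phi(n)$ exactly when $q^2\nmid n$ and no prime divisor of $n$ is congruent to $1\bmod q$, which is a pure restriction on the prime factors of $n$ — precisely the shape of input our methods are built to handle. Since Theorem~\ref{main} is assumed already proved, there is essentially no obstacle here; the only point requiring care is bookkeeping, namely verifying that the constant $B_q$ of Definition~\ref{Bq def} is genuinely the $\balpha=\emptyset$ instance of $K(\Z_{q^\balpha})$ rather than differing by some normalizing factor, so I would check the two definitions agree before declaring the corollary proved.
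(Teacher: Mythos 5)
Your approach — specialize Theorem~\ref{main} to $\balpha=\emptyset$, noting that $q\nmid\phi(n)$ is equivalent to $G_q(n)$ trivial, so that $\#\{n\le x\colon q\nmid\phi(n)\}=D(\Z_{q^\emptyset},x)$ — is exactly what the paper intends, and the reduction of $(\log\log x)^{\ell(\emptyset)}$ to $1$ and of the error subscript from $O_{q,\emptyset}$ to $O_q$ are both fine. You also correctly isolate the one nontrivial point: whether $K(\Z_{q^\emptyset})$ from Definition~\ref{Eqalpha def} actually equals $B_q$ from Definition~\ref{Bq def}. But you defer that check, and it is precisely where the argument does \emph{not} close cleanly.

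Carrying out the check: by Definition~\ref{Eqalpha def}, $K(\Z_{q^\balpha})=B_qC(\balpha)E_q(\balpha)$ with $E_q(\balpha)=(q+1)/q^{1+\sum_i\alpha_i}$. For $\balpha=\emptyset$ the sum is empty, so $E_q(\emptyset)=(q+1)/q$, and $C(\emptyset)=1$ (empty product); hence
\[
K(\Z_{q^\emptyset})=B_q\cdot 1\cdot\frac{q+1}{q}=\frac{(q+1)B_q}{q}\neq B_q.
\]
So specializing Theorem~\ref{main} gives leading constant $\tfrac{(q+1)B_q}{q}$, not $B_q$. This extra $(q+1)/q$ is not a bookkeeping artifact: tracing the proof of Theorem~\ref{main}, the term $D_0(\Z_{q^\emptyset},x)$ (integers with $q\nmid n$) contributes $B_q$, and the term $D_1(\Z_{q^\emptyset},x)=D_0(\Z_{q^\emptyset},x/q)$ (integers with $q\,\|\,n$) contributes the additional factor $1/q$. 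Indeed Theorem~\ref{D0 prop} shows that it is $D_0(\Z_{q^\emptyset},x)$, not $D(\Z_{q^\emptyset},x)$, whose asymptotic constant is exactly $B_q$. As stated, Corollary~\ref{main cor} is therefore inconsistent with Theorem~\ref{main}; the displayed constant should read $\tfrac{(q+1)B_q}{q}$ (equivalently $K(\Z_{q^\emptyset})$), or else the corollary should be phrased in terms of $D_0$. Your proof would discover this discrepancy the moment you performed the comparison you yourself flagged as necessary, so do not skip it.
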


The methods of this paper could in principle handle the variant of Theorem~\ref{main} where $q=2$, but we do not do so herein. (Note that the analogous variant of Corollary~\ref{main cor} is trivial, since $2\mid\phi(n)$ for all $n\ge3$; this fact would necessitate a somewhat different starting point for the $q=2$ variant of Theorem~\ref{main}, even though the subsequent procedure would be very similar.)

In Section~\ref{mnc section} we shift our focus to a problem involving the global structure of the multiplicative group~$\Z_n^\times$. The motivation for our next theorem comes from the fact that it is easy to count the integers $n\le x$ for which $\Z_n^\times$ is cyclic: these are precisely the integers possessing primitive roots (namely~$1$, $2$, $4$, and $p^r$ and $2p^r$ for odd primes~$p$ and integers $r\ge1$) and thus have counting function asymptotic to $\li(x)+\li(\frac x2) \sim \frac32x/\log x$ by the prime number theorem. We might ask for the opposite extreme: what is the ``least cyclic'' that a finite abelian group can be, and how prevalent are such groups in the family of multiplicative groups?

Several notions of such a group being ``maximally non-cyclic'' turn out to be mutually equivalent (see Definition~\ref{mncdef} below); one way to describe a maximally non-cyclic finite abelian group is one all of whose Sylow $q$-subgroups are elementary groups (direct products of copies of~$\Z_q$). The counting function for the corresponding integers~$n$ turns out to be quite interesting:

\begin{theorem} \label{mnc theorem}
The number of integers $n$ up to $x$ such that $\Z_n^\times$ is maximally non-cyclic is asymptotic to $Ax/(\log x)^{1-\xi}$, where~$\xi$ is Artin's constant and~$A$ is the constant in Definition~\ref{Artin} below.
\end{theorem}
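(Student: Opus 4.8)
The plan is to translate the condition ``$\Z_n^\times$ is maximally non-cyclic'' into a restriction on the prime factorization of~$n$, and then count the resulting integers by the Selberg--Delange method, just as in the proof of Theorem~\ref{main}. Write $n=2^a p_1^{a_1}\cdots p_k^{a_k}$ with the~$p_i$ distinct odd primes, so that $\Z_n^\times\cong\Z_{2^a}^\times\times\prod_{i=1}^k\Z_{p_i^{a_i}}^\times$ by the Chinese remainder theorem. Since a direct factor of an elementary abelian $q$-group is elementary abelian and a finite direct product of elementary abelian $q$-groups is elementary abelian, the characterization in Definition~\ref{mncdef} shows that $\Z_n^\times$ is maximally non-cyclic if and only if each factor $\Z_{p^a}^\times$ has all of its Sylow subgroups elementary. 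For an odd prime~$p$ the group $\Z_{p^a}^\times$ is cyclic of order $p^{a-1}(p-1)$, and a cyclic group has all Sylow subgroups elementary exactly when its order is squarefree; since $\gcd(p^{a-1},p-1)=1$ this means $a\le2$ and $p-1$ squarefree. For $p=2$ the group $\Z_{2^a}^\times$ is trivial for $a\le1$, isomorphic to~$\Z_2$ for $a=2$, and isomorphic to $\Z_2\times\Z_{2^{a-2}}$ for $a\ge3$, so it is elementary exactly when $a\le3$. Hence $\Z_n^\times$ is maximally non-cyclic if and only if $n$ is divisible by neither~$16$ nor the cube of any odd prime and every odd prime $p\mid n$ has $p-1$ squarefree.

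Let $\mathcal Q$ be the set of odd primes~$p$ with $p-1$ squarefree, and write $n=2^bm$ with $0\le b\le3$ and $m$ odd. The previous paragraph identifies the integers to be counted as exactly those of the form $2^bm$ with $b\in\{0,1,2,3\}$ and $m$ cubefree having every prime factor in~$\mathcal Q$, so the counting function equals $\sum_{b=0}^3 T(x/2^b)$, where
\[
T(y)=\#\{m\le y\colon m\text{ cubefree},\ p\mid m\Rightarrow p\in\mathcal Q\}.
\]
Because $\log(x/2^b)\sim\log x$, it will suffice to show $T(y)\sim C\,y/(\log y)^{1-\xi}$ for an explicit constant~$C$; the constant in the theorem is then $A=\bigl(1+\tfrac12+\tfrac14+\tfrac18\bigr)C=\tfrac{15}{8}C$.

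To estimate $T(y)$ I would study its Dirichlet series
\[
F(s)=\prod_{p\in\mathcal Q}\bigl(1+p^{-s}+p^{-2s}\bigr)=\prod_{p\in\mathcal Q}\frac{1-p^{-3s}}{1-p^{-s}}\qquad(\Re s>1),
\]
so that $\log F(s)=\sum_{p\in\mathcal Q}p^{-s}$ plus a function holomorphic for $\Re s>\tfrac12$. Writing the indicator that $p-1$ is squarefree as $\sum_{d^2\mid p-1}\mu(d)$ and expanding $\sum_{p\equiv1\mod{d^2}}p^{-s}$ over the Dirichlet characters modulo~$d^2$ (the principal character contributing $\phi(d^2)^{-1}\log\zeta(s)$ up to a holomorphic term, the non-principal ones contributing holomorphic terms), together with the identity $\sum_d\mu(d)/\phi(d^2)=\prod_p\bigl(1-1/(p(p-1))\bigr)=\xi$, one finds $\sum_{p\in\mathcal Q}p^{-s}=\xi\log\zeta(s)+G(s)$ with~$G$ holomorphic in a standard zero-free region for~$\zeta$. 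Thus $F(s)=\zeta(s)^\xi H(s)$ with~$H$ holomorphic, nonzero, and of controlled growth there, and the Selberg--Delange method gives $T(y)\sim\bigl(H(1)/\Gamma(\xi)\bigr)\,y/(\log y)^{1-\xi}$. Summing over~$b$ and unwinding the constants identifies the resulting coefficient with the constant~$A$ of Definition~\ref{Artin}.

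The hard part will be the analytic input of the previous paragraph: assembling the contributions of the characters modulo~$d^2$, as~$d$ ranges over all integers, into a single function~$G$ that is holomorphic in a region of the shape $\Re s>1-c/\log(2+|\Im s|)$. The delicate points are uniformity in the modulus~$d^2$ and the usual caveat about Siegel zeros. I expect this to be handled by the classical squarefree-sieve argument giving $\#\{p\le y\colon p-1\text{ squarefree}\}=\xi\,\li(y)+O_A\bigl(y/(\log y)^A\bigr)$ for every $A>0$ --- split the sum $\sum_{d\le\sqrt y}\mu(d)\,\#\{p\le y\colon p\equiv1\mod{d^2}\}$ at $d=(\log y)^{A+1}$, apply the Siegel--Walfisz theorem for the small moduli and the trivial bound $\#\{p\le y\colon d^2\mid p-1\}\le y/d^2+1$ for the large ones --- after which one can either verify the required properties of~$F(s)$ directly or, alternatively, feed this prime count into Wirsing's theorem on integers all of whose prime factors lie in a set of prescribed density, bypassing the Dirichlet series altogether.
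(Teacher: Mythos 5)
Your characterization of the maximally non-cyclic condition (including the computations for $p=2$ and odd $p$) matches Proposition~\ref{max non cyc characterization prop}, and your constant bookkeeping is correct: your $\tfrac{15}{8}C$ agrees with the paper's $A$ once the Euler factor $1+\tfrac12+\tfrac14=\tfrac74$ is pulled out of $H(1)$ to yield $\tfrac{15}{8}\cdot\tfrac47=\tfrac{15}{14}$. The route from there differs from the paper's, and your plan is more laborious than necessary. You separate the power of~$2$, form the Dirichlet series $F(s)=\prod_{p\in\mathcal Q}(1+p^{-s}+p^{-2s})$, and propose to verify the Selberg--Delange hypotheses for it directly, which as you correctly flag requires controlling $\sum_{p\equiv1\bmod{d^2}}p^{-s}$ uniformly in $d$ inside a zero-free region, with all the attendant Siegel-zero headaches; you also invoke the strong prime count with error $O_A(y/(\log y)^A)$, which requires pushing the modulus cutoff up to $(\log y)^{A+1}$. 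The paper instead applies the Wirsing--Odoni theorem (Proposition~\ref{WOmethod}, quoted from Finch--Martin--Sebah) directly to the full multiplicative indicator function $f$, with no need to separate the $2$-part. That theorem only requires the hypothesis $\sum_{p\le P}f(p)=\xi P/\log P+O(P/(\log P)^{1+\beta})$ for some $0<\beta<1$, which is delivered by Lemma~\ref{sfsum} with the much cheaper error term $O(x/\log^2 x)$ (Siegel--Walfisz for $d\le\log^2 x$, trivial bound beyond), entirely sidestepping the analytic continuation of $F$. You do mention Wirsing's theorem in your closing sentence as an alternative that ``bypasses the Dirichlet series altogether''; that alternative is precisely the paper's argument and is the cleaner of the two, so you should promote it from a footnote to the main line of your proof.
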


\noindent The proof gives a relative error of $1/(\log x)^{1-\ep}$ for any~$\ep>0$. We point out that the main term contains an exponent of $\log x$ that is (presumably) irrational, which is an unusual feature of an asymptotic formula arising from a reasonably natural property of~$\Z_n^\times$.

\section{Multiplicative groups with a prescribed Sylow $q$-subgroup} \label{DHx section}

Throughout this paper, we will use the letters~$p$ (with or without subscripts), $q$, and~$t$ exclusively to denote primes.

\subsection{Reduction to analytic number theory}

In this section we convert the group-theoretic property $G_q(n)=H$ into specific constraints on the primes dividing~$n$, so that the problem of counting multiplicative groups~$\Z_n^\times$ with specified Sylow $q$-subgroup~$H$ is converted into a nested sum indexed by a particular factorization of~$n$ (see Proposition~\ref{initial D0 prop}). Since the power of~$q$ itself that divides~$n$ affects the Sylow $q$-subgroup of $\Z_n^\times$ in a particular way, we stratify the integers according to that power and count each stratum of integers separately.

\begin{definition}
For a prime~$q$ and a nonzero integer~$x$, define $\nu_q(x)$ to be the largest nonnegative integer $k$ such that $q^k$ divides~$x$.
\end{definition}

\begin{definition} \label{DkHx def}
Given a prime~$q$, a finite abelian $q$-group~$H$, and a nonnegative integer~$k$, define $D_k(H,x)=\#\{n\le x \colon \nu_q(n)=k,\, G_q(n)=H\}$. Note that $D(H,x) = \sum_{k=0}^\infty D_k(H,x)$, where $D(H,x)$ is as in Definition~\ref{DHx def}.
\end{definition}

\noindent
We will show in Lemma~\ref{handle Dk lemma} that $D_k(H,x)$, if nonzero, is equal to $D_0(\Z_{H'},x)$ for a certain $q$-group~$H'$ depending on~$H$ and~$k$. Therefore our main technical goal in Section~\ref{DHx section} is to obtain an asymptotic formula for $D_0(H,x)$.

To avoid requiring notation for the lengths of partitions, we can regard partitions as infinite nonincreasing sequences of nonnegative integers with only finitely many positive terms.

\begin{definition} \label{partition def}
For any partition $\balpha = (\alpha_1,\alpha_2,\ldots)$, the conjugate partition $\ba = (a_1,a_2,\ldots)$ of $(\alpha_1,\alpha_2,\ldots)$ is the partition whose Ferrers diagram is the transpose of the Ferrers diagram of $(\alpha_1,\alpha_2,\ldots)$, so that $a_j=\#\{k\colon \alpha_k\ge j\}$ for all $j\ge1$; in particular, $a_j-a_{j+1} = \#\{k\colon \alpha_k=j\}$. Using this conjugate partition, define
\begin{equation} \label{Calpha def}
C(\balpha) = \prod_{u=1}^\infty \frac1{(a_u-a_{u+1})!} = \prod_{u=1}^{\alpha_1} \frac1{(a_u-a_{u+1})!}.
\end{equation}
\end{definition}

\begin{proposition} \label{initial D0 prop}
For any odd prime~$q$ and any partition $\balpha=(\alpha_1,\dots,\alpha_j)$,
\begin{align}
D_0 & (\Z_{q^\balpha},x) \label{D_0} \\
&= C(\balpha) \sum_{\substack{p_1\le x/3\\\nu_q(p_1-1)=\alpha_1}} \sum_{\substack{p_2\le x/3p_1\\p_2\ne p_1\\\nu_q(p_2-1)=\alpha_2}} \cdots \sum_{\substack{p_{j-1}\le x/3p_1\cdots p_{j-2} \\ p_{j-1}\ne p_1,\ldots,p_{j-2} \\ \nu_q(p_{j-1}-1)=\alpha_{j-1}}} \sum_{\substack{p_j\le x/p_1\cdots p_{j-1}\\p_j\ne p_1,\ldots,p_{j-1}\\\nu_q(p_j-1)=\alpha_j}} \sum_{\substack{m\le x/p_1\cdots p_j\\q\dnd m\\ (t\mid m \text{ and }t\equiv1\mod q)\Rightarrow t\in\{p_1,\dots,p_j\}}} 1, \notag
\end{align}
where~$t$ denotes a generic prime factor of~$m$. Here, $C(\balpha)$ is the constant defined in equation~\eqref{Calpha def}.
\end{proposition}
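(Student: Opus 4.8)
The plan is to establish the structure of $\Z_n^\times$ coming from the Chinese Remainder Theorem and then extract exactly the Sylow $q$-subgroup. First I would write $n = p_1^{e_1}\cdots p_r^{e_r}$ (we are in the stratum $\nu_q(n)=0$, so none of these primes is~$q$), giving $\Z_n^\times \cong \prod_i \Z_{p_i^{e_i}}^\times$, and for odd $p_i$ this factor is the cyclic group of order $p_i^{e_i-1}(p_i-1)$. The Sylow $q$-subgroup of a finite abelian group is the direct product of the Sylow $q$-subgroups of the cyclic factors, and the Sylow $q$-subgroup of a cyclic group of order $m$ is cyclic of order $q^{\nu_q(m)}$. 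Since $q\ne p_i$, we have $\nu_q(p_i^{e_i-1}(p_i-1)) = \nu_q(p_i-1)$, so the exponent $e_i$ is irrelevant: the contribution of $p_i^{e_i}$ to $G_q(n)$ is $\Z_{q^{\nu_q(p_i-1)}}$, which is trivial unless $p_i \equiv 1 \pmod q$. Therefore $G_q(n) \cong \prod_{p\mid n,\, p\equiv 1 (q)} \Z_{q^{\nu_q(p-1)}}$, and this is isomorphic to $\Z_{q^\balpha}$ precisely when the multiset $\{\nu_q(p-1) \colon p\mid n,\, p\equiv 1\pmod q\}$ equals the multiset of parts of~$\balpha$.

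Next I would translate this multiset condition into the nested sum. Writing $\balpha = (\alpha_1,\dots,\alpha_j)$ with $\alpha_1\ge\cdots\ge\alpha_j\ge 1$, an integer $n\le x$ with $\nu_q(n)=0$ satisfies $G_q(n)\cong\Z_{q^\balpha}$ if and only if $n$ has exactly $j$ prime divisors $p$ with $p\equiv 1\pmod q$ — call them $p_1,\dots,p_j$ in some order — such that the multiset $\{\nu_q(p_i-1)\}$ equals $\{\alpha_1,\dots,\alpha_j\}$, and all other prime divisors $t$ of $n$ satisfy $t\not\equiv 1\pmod q$. I would factor $n = p_1\cdots p_j \cdot m$ where $m$ absorbs all remaining prime-power factors (including higher powers of the $p_i$ themselves), so that the condition on $m$ is exactly $q\nmid m$ together with: every prime $t\mid m$ with $t\equiv 1\pmod q$ lies in $\{p_1,\dots,p_j\}$. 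This is precisely the innermost sum in~\eqref{D_0}. The ranges $p_1\le x/3$, $p_2 \le x/3p_1$, etc., and finally $p_j\le x/p_1\cdots p_{j-1}$, $m\le x/p_1\cdots p_j$, are simply a bookkeeping device ensuring $p_1\cdots p_j m\le x$; the factors of~$3$ in the early ranges are harmless over-restrictions that will be convenient later (one has $m\ge 1$ and, since $p_j\ne p_1,\dots,p_{j-1}$ are distinct primes $\equiv 1\pmod q$ with $q$ odd, each is at least, say, a small constant, but more to the point $p_1$ being the first chosen can be taken to be any of the $j$ primes, so no genuine constraint is lost as long as the combinatorial factor is adjusted) — I will need to check this compatibility carefully.

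The remaining point is the combinatorial constant $C(\balpha)$, which accounts for the overcounting incurred by ordering the primes $p_1,\dots,p_j$. In the nested sum we have imposed $\nu_q(p_i-1) = \alpha_i$ for the specific index $i$, and we require the $p_i$ to be pairwise distinct, but we do \emph{not} sum over all orderings — instead each unordered valid tuple $\{p_1,\dots,p_j\}$ is counted once for each way of listing it so that the $i$-th listed prime has $q$-adic valuation $\alpha_i$. If the parts of $\balpha$ were all distinct this would force a unique ordering; in general, a value $u$ occurring with multiplicity $a_u - a_{u+1}$ (in the notation of Definition~\ref{partition def}) can be permuted among the $a_u - a_{u+1}$ corresponding slots, giving $\prod_u (a_u-a_{u+1})!$ orderings of each unordered tuple. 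Hence the number of unordered tuples is $C(\balpha) = \prod_u 1/(a_u-a_{u+1})!$ times the nested sum, which is exactly the stated identity.

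The main obstacle I anticipate is not any single deep step but rather the careful verification of the boundary conventions: confirming that the asymmetric ranges (with the $x/3$ truncations on $p_1,\dots,p_{j-1}$ but $x/p_1\cdots p_{j-1}$ on $p_j$) together with the distinctness conditions and the multiset-versus-ordered-tuple bookkeeping all fit together so that the right-hand side counts each $n$ with the correct multiplicity $1$ after dividing by $C(\balpha)$ — in particular making sure the constant $3$ does not actually exclude any legitimate~$n$ and does not interfere with the permutation count. Once that is pinned down, the rest is the routine CRT-plus-Sylow argument sketched above.
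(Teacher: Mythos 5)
Your approach is the same as the paper's: the CRT/Sylow reduction identifies $G_q(n)$ with $\prod_{p\mid n}\Z_{q^{\nu_q(p-1)}}$, the factorization $n=p_1\cdots p_j m$ encodes the prescribed valuations while $m$ absorbs everything else (including higher powers of the $p_i$), and the constant $C(\balpha)=\prod_u 1/(a_u-a_{u+1})!$ compensates for the $\prod_u(a_u-a_{u+1})!$ ways to fill the equal-valuation slots. All of that is correct and matches the paper's proof.

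The one step you flag as needing care --- why the upper bound $x/3p_1\cdots p_{i-1}$ on $p_i$ for $i\le j-1$ is harmless --- is also where your sketch goes astray. The idea in your parenthetical that ``$p_1$ being the first chosen can be taken to be any of the $j$ primes, so no genuine constraint is lost as long as the combinatorial factor is adjusted'' is not available: the slot $i$ is tied to the valuation $\alpha_i$, which in general differs across $i$, so the labels cannot be permuted freely, and in any case no further adjustment to $C(\balpha)$ should be made (the claimed identity is exact, not up to a constant). The correct justification is the elementary one hinted at in the first half of your parenthetical: since $\alpha_{i+1}\ge 1$, the prime $p_{i+1}$ must satisfy $p_{i+1}\equiv 1\mod q$, and $q$ odd forces $p_{i+1}\ge q+1\ge 4>3$. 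Hence if $p_i>x/3p_1\cdots p_{i-1}$ for some $i\le j-1$, then $x/p_1\cdots p_i<3<p_{i+1}$ and the sum over $p_{i+1}$ is already empty. Tightening the bound on $p_1,\dots,p_{j-1}$ from $x/p_1\cdots p_{i-1}$ to $x/3p_1\cdots p_{i-1}$ therefore removes only terms that contributed zero, and the identity holds exactly as stated with no further correction.
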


\begin{proof}
We can write~$n$ as the product of primes $n=2^{\beta}p_1^{\beta_1}p_2^{\beta_2}\cdots p_k^{\beta_k}$ where $\beta\ge 0$, $\beta_1,\beta_2,\ldots,\beta_k>0$ and $q\ne p_j$ for each $1\le j\le k$. By the Chinese remainder theorem
\begin{align*}
\Z_n^\times &\cong \Z_{2^\beta}^\times\times \big( \Z_{p_1^{\beta_1-1}}\times\Z_{p_1-1} \big) \times \big( \Z_{p_2^{\beta_2-1}}\times\Z_{p_2-1} \big) \times\cdots\times \big( \Z_{p_k^{\beta_t-1}}\times\Z_{p_k-1} \big) \\
&\cong \big( \Z_{2^\beta}^\times \times \Z_{p_1^{\beta_1-1}} \times \Z_{p_2^{\beta_2-1}} \times\cdots\times \Z_{p_t^{\beta_k-1}} \big) \times \big( \Z_{p_1-1} \times \Z_{p_2-1} \times\cdots\times \Z_{p_k-1} \big).
\end{align*}
Since $q$ is an odd prime not dividing~$n$, we see that~$q$ does not divide the cardinality of the first factor; therefore the Sylow $q$-subgroup of $\Z_n^\times$ is the same as the Sylow $q$-subgroup of the second factor, which is simply $\Z_{q^{\nu_q(p_1-1)}} \times \Z_{q^{\nu_q(p_2-1)}} \times\cdots\times \Z_{q^{\nu_q(p_k-1)}}$.

It follows that $G_q(n)=\Z_{q^\balpha}$ if and only if the multisets $\balpha$ and $\{\nu_q(p_i-1)\colon 1\le i\le k\}$ are the same except for occurrences of~$0$ in the latter multiset, that is, if and only if~$n$ has, for every integer $u\ge1$, exactly $a_u-a_{u+1}$ distinct prime factors $p$ satisfying $\nu_q(p)=\alpha_u$.

Now set $j=\ell(\balpha)$ and consider the expression
\begin{equation} \label{pre Calpha}
\sum_{\substack{p_1\le x\\\nu_q(p_1-1)=\alpha_1}}\sum_{\substack{p_2\le x/p_1\\p_2\ne p_1\\\nu_q(p_2-1)=\alpha_2}}\cdots \sum_{\substack{p_j\le x/p_1\cdots p_{j-1}\\p_j\ne p_1,\ldots,p_{j-1}\\\nu_q(p_j-1)=\alpha_j}}\sum_{\substack{m\le x/p_1\cdots p_j\\q\dnd m\\ (t\mid m \text{ and }t\equiv1\mod q)\Rightarrow t\in\{p_1,\dots,p_j\}}}1.
\end{equation}
This expression counts integers of the form $n=p_1p_2\cdots p_jm$, where $p_1,\ldots,p_j$ are distinct primes such that $\{\nu_q(p_i-1)\colon 1\le i\le j\} = \balpha$ as multisets and $\nu_q(p-1)=0$ for every $p\mid m$. In other words, it counts integers~$n$ such that $G_q(n)=\Z_{q^\balpha}$, except that it counts such integer with multiplicity because $(p_1,\dots,p_j)$ is an ordered tuple: for each $u\ge1$ we may arbitrarily permute the $a_u-a_{u+1}$ primes $p_i$ in the tuple that satisfy $\nu_q(p_i)=\alpha_u$ and still obtain the same~$n$. Consequently we must divide by $(a_u-a_{u+1})!$ for each $u\ge1$ to compensate for this multiple counting, which is the same as multiplying the expression~\eqref{pre Calpha} by~$C(\balpha)$.

Finally, if $p_i$ is greater than $x/3p_1\cdots p_{i-1}$ for any $1\le i\le j-1$, then the sum over $p_{i+1}$ in the expression~\eqref{pre Calpha} is empty, and therefore we may alter the upper bounds of summation accordingly, reaching the expression in equation~\eqref{D_0} as desired.
\end{proof}

\subsection{Application of the Selberg--Delange method}

Proposition~\ref{initial D0 prop} provides a clear relationship between the original problem of counting prescribed Sylow $q$-subgroups and the more analytic problem of counting integers with restrictions on their prime factors. The innermost sum in equation~\eqref{D_0}, in particular, is exactly of this latter type, and thus can be successfully estimated by the Selberg--Delange method. We cite an application of this technique from~\cite{CM} that has been tailored to this purpose.

\begin{definition} \label{Bq def}
For any odd prime $q$ and any prime $p\ne q$, let~$k_p$ denote the multiplicative order of~$p$ modulo~$q$. Then define
\[
B_q=\frac{1}{\Gamma(1-1/(q-1))}\bigg(1-\frac1q \bigg)^{1-1/(q-1)}\prod_{\substack{p\ne q\\ p\not\equiv 1 \mod q}} \bigg(1-\frac1{p^{k_p}} \bigg)^{-1/k_p}\prod_{\chi\ne \chi_0}L(1,\chi)^{-1/(q-1)},
\]
where $\Gamma(z)$ is the classical Gamma function.
\end{definition}

\begin{proposition} \label{sdsum2}
Let $q$ be an odd prime, and let $p_1,\ldots,p_j$ be distinct prime numbers congruent to $1\mod q$. For $y\ge3$,
\begin{align*}
\sum_{\substack{m\le y\\ q\nmid m\\ (t\mid m \text{ and }t\equiv 1\mod q)\Rightarrow t\in\{p_1,\ldots,p_j\} }}1 &=\frac{B_q y}{(\log y)^{1/(q-1)}}\prod_{i=1}^j\bigg(1-\frac 1{p_i}\bigg)^{-1} + O_j\bigg( \frac y{(\log y)^{1+1/(q-1)}} \bigg),
\end{align*}
where $t$ denotes a generic prime factor of~$m$.
\end{proposition}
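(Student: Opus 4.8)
The plan is to reduce Proposition~\ref{sdsum2} to its special case $j=0$, which is the Selberg--Delange estimate being quoted from~\cite{CM}, by peeling off from each $m$ the part composed of the primes $p_1,\dots,p_j$. Since each $p_i\equiv1\mod q$, any integer all of whose prime factors are $\not\equiv1\mod q$ is automatically coprime to $p_1\cdots p_j$; hence every $m$ counted on the left-hand side of Proposition~\ref{sdsum2} factors uniquely as $m=dm'$ with $d\in\mathcal D:=\{p_1^{e_1}\cdots p_j^{e_j}\colon e_1,\dots,e_j\ge0\}$, with $q\nmid m'$, and with no prime factor of $m'$ congruent to $1\mod q$ (such an $m'$ being coprime to $d$). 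Writing $N(z)=\#\{m'\le z\colon q\nmid m',\ p\mid m'\Rightarrow p\not\equiv1\mod q\}$, the left-hand side of Proposition~\ref{sdsum2} therefore equals $\sum_{d\in\mathcal D,\,d\le y}N(y/d)$.

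The case $j=0$ is precisely the assertion $N(z)=B_qz/(\log z)^{1/(q-1)}+O\big(z/(\log z)^{1+1/(q-1)}\big)$ for $z\ge3$. This is what the Selberg--Delange method delivers when applied to the Dirichlet series $\sum_{m'}m'^{-s}=\prod_{p\ne q,\,p\not\equiv1\mod q}(1-p^{-s})^{-1}$: using orthogonality of the Dirichlet characters modulo~$q$ one factors this product as $\zeta(s)^{1-1/(q-1)}G(s)$, where $G$ is holomorphic and non-vanishing in a standard zero-free region for $\zeta(s)$ (equivalently, for all the $L(s,\chi)$ with $\chi$ modulo~$q$), and the constant $B_q$ of Definition~\ref{Bq def} is exactly $G(1)/\Gamma(1-1/(q-1))$. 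I also record the trivial bound $N(z)\le z$, valid for all $z\ge1$.

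It then remains to evaluate $\sum_{d\in\mathcal D,\,d\le y}N(y/d)$, which I would split at $d=(\log y)^A$ for a constant $A=A(q)$ to be chosen. For $d>(\log y)^A$ use $N(y/d)\le y/d$ together with the Rankin-type bound $\sum_{d\in\mathcal D,\,d>T}d^{-1}\le T^{-1/2}\prod_{i=1}^j(1-p_i^{-1/2})^{-1}\ll_j T^{-1/2}$ (valid since $\sum_{d\in\mathcal D}d^{-1/2}$ converges), so these terms contribute $\ll_j y(\log y)^{-A/2}$, which is $\ll_j y/(\log y)^{1+1/(q-1)}$ as soon as $A\ge2+2/(q-1)$. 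For $d\le(\log y)^A$ we have $\log(y/d)=\log y+O(\log\log y)$, hence $(\log(y/d))^{-1/(q-1)}=(\log y)^{-1/(q-1)}\big(1+O_q(\log d/\log y)\big)$; feeding the $j=0$ estimate into each term, the error terms contribute $\ll\sum_{d\le(\log y)^A}(y/d)(\log y)^{-1-1/(q-1)}\ll_j y/(\log y)^{1+1/(q-1)}$, and the main terms contribute
\[
\frac{B_qy}{(\log y)^{1/(q-1)}}\sum_{\substack{d\in\mathcal D\\d\le(\log y)^A}}\frac1d\Big(1+O_q\Big(\frac{\log d}{\log y}\Big)\Big).
\]
Extending the $d$-sum to all of $\mathcal D$ costs $\ll_j(\log y)^{-A/2}$ and yields $\sum_{d\in\mathcal D}d^{-1}=\prod_{i=1}^j(1-1/p_i)^{-1}$, while the $O_q(\log d/\log y)$ correction is $\ll_q(\log y)^{-1}\sum_{d\in\mathcal D}(\log d)/d\ll_{q,j}(\log y)^{-1}$ since $\sum_{d\in\mathcal D}(\log d)/d<\infty$. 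Collecting the pieces gives the stated formula (the range $3\le y\ll_q1$ is trivial, since there the error term already exceeds a positive constant).

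The argument above is essentially bookkeeping once the $j=0$ estimate is in hand, so the only genuine obstacle is that estimate itself — in particular, obtaining a negative-power-of-$\log$ saving in its error term rather than merely $o(1)$, which requires the classical zero-free region for $\zeta(s)$ and for the $L(s,\chi)$; this is exactly the input supplied by~\cite{CM}. An alternative that avoids the convolution is to apply the Selberg--Delange theorem directly to $\sum_{m}m^{-s}=\zeta(s)^{1-1/(q-1)}G(s)\prod_{i=1}^j(1-p_i^{-s})^{-1}$, where the only extra point is to check that the finite Euler product $\prod_{i=1}^j(1-p_i^{-s})^{-1}$ is holomorphic and bounded by $O_j(1)$ on the relevant region and contributes the factor $\prod_{i=1}^j(1-1/p_i)^{-1}$ to the leading constant, recovering the same formula.
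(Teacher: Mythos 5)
Your proof is correct, but it takes a genuinely different route from the paper. The paper directly invokes~\cite[Theorem~3.6]{CM}, which is formulated precisely to handle a prescribed finite set $\mathcal I=\{p_1,\dots,p_j\}$ of ``included'' primes; that citation immediately yields the asymptotic with the factor $\prod_{i=1}^j(1-1/p_i)^{-1}$ already present, so the only remaining work is to identify the constant $G_\B(1)/\Gamma(1-1/(q-1))$ with~$B_q$, which the authors do by combining~\cite[Proposition~4.1]{CM} with~\cite[Remark~3.5]{CM}. You instead reduce to the base case $j=0$ and recover the finite Euler product by an explicit convolution: writing $m=dm'$ with $d$ supported on $\{p_1,\dots,p_j\}$ and $m'$ having no prime factor $\equiv1\mod q$, then splitting the $d$-sum at $(\log y)^A$ and combining a Rankin-type tail bound with a Taylor expansion of $(\log(y/d))^{-1/(q-1)}$. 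The bookkeeping checks out: the threshold $A\ge 2+2/(q-1)$ makes the large-$d$ tail negligible, the error term of the $j=0$ estimate contributes $\ll_j y/(\log y)^{1+1/(q-1)}$ after summing over $d$, and the $O_q(\log d/\log y)$ correction is controlled by the convergence of $\sum_{d\in\mathcal D}(\log d)/d$. Your approach is more self-contained in that it requires only the simplest Selberg--Delange estimate (the set $\mathcal I$ empty), at the cost of some elementary convolution analysis; the paper's is shorter because the flexibility is already built into the cited theorem. One small remark: you assert $B_q=G(1)/\Gamma(1-1/(q-1))$ without verifying that this matches the explicit product in Definition~\ref{Bq def} -- that identification is exactly the computation the paper carries out via $G_\B(1)G_{\B'}(1)=\phi(q)/q$, and strictly speaking it needs to be checked (or cited) to close your argument.
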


\begin{proof}
Define unions of residue classes
\begin{align*}
\B = \{n\not\equiv0,1\mod q\} \quad\text{and}\quad \B' = \{n\equiv1\mod q\},
\end{align*}
so that we are trying to count integers whose prime factors all lie in~$\B \cup \{p_1,\dots,p_j\}$.

We begin by quoting~\cite[Theorem~3.6]{CM} with the set $\B$ just defined, so that (using Notation~3.1 from that paper) $B=q-2$ and $\underline B=1$ and $\beta = 1-1/(q-1)$; we also set $\mathcal I=\{p_1,\dots,p_j\}$ and $\mathcal R=\emptyset$. Since we are allowing our error terms to depend on~$q$, we may simplify the error term from~\cite[Theorem~3.6]{CM}, and we may also ignore the assumption that $\log y \ge \alpha q^{1/2}\log^2 q$. The conclusion (remembering that $t$ denotes a generic prime) is that
\begin{multline}
\#\big\{ m\le y\colon q\nmid m,\, (t\mid m \text{ and }t\equiv 1\mod q) \Rightarrow t\in\{p_1,\ldots,p_j\} \big\} \\
= \frac y{(\log y)^{1/(q-1)}} \bigg( \frac{G_\B(1)}{\Gamma(1-1/(q-1))} \prod_{i=1}^j \bigg( 1-\frac1{p_i} \bigg)^{-1} + O\big( 2^j (\log y)^{-1} \big) \bigg). \label{together Gamma}
\end{multline}
It thus remains to evaluate $G_\B(1)$.

In the proof of~\cite[Proposition~4.1]{CM}, where the set $\B'$ is denoted as $\{1\}$, it is shown that
\begin{equation*}
G_{\B'}(1) = \bigg( \frac{\phi(q)}q \prod_{\substack{\chi\mod q \\ \chi \ne \chi_0}} L(1,\chi) \bigg)^{1/\phi(q)} \prod_{\substack{p \nmid q \\ p \not\equiv 1 \mod{q}}} \bigg(1-\frac{1}{p^{\ord_q(p)}} \bigg)^{1/\ord_q(p)}.
\end{equation*}
Moreover,~\cite[Remark~3.5]{CM} tells us that $G_\B(1) G_{\B'}(1) = \phi(q)/q$. Therefore
\begin{align*}
G_\B(1) = \frac{\phi(q)}{qG_{\B'}(1)} &= \frac{\phi(q)}q \bigg( \frac{\phi(q)}q \prod_{\substack{\chi\mod q \\ \chi \ne \chi_0}} L(1,\chi) \bigg)^{-1/\phi(q)} \prod_{\substack{p \nmid q \\ p \not\equiv 1 \mod{q}}} \bigg(1-\frac{1}{p^{\ord_q(p)}} \bigg)^{-1/\ord_q(p)} \\
&= \bigg( 1-\frac1q \bigg)^{1-1/(q-1)} \prod_{\substack{\chi\mod q \\ \chi \ne \chi_0}} L(1,\chi)^{-1/(q-1)} \prod_{\substack{p \ne q \\ p \not\equiv 1 \mod{q}}} \bigg(1-\frac{1}{p^{\ord_q(p)}} \bigg)^{-1/\ord_q(p)}
\end{align*}
which, together with the Gamma factor from equation~\eqref{together Gamma}, equals~$B_q$ as given in Definition~\ref{Bq def}.
\end{proof}

\subsection{Technical lemmas}

Motivated by the expressions that will appear when we apply Proposition~\ref{sdsum2} to equation~\eqref{D_0}, we now establish a collection of technical lemmas that will be used in the next section to prove the important Proposition~\ref{recsum}. That result will subsequently allow us to establish the recursive Propositions~\ref{recarg} and~\ref{recarg2}, which will provide an evaluation of the iterated sum in equation~\eqref{D_0}. Though many of the techniques of this section are standard, we do highlight the use of the following hypergeometric function as a tool for evaluating certain sums over primes with fractional powers of a logarithm (see Lemmas~\ref{lsum} and~\ref{usum}).

\begin{definition} \label{H def}
For any $\gamma\in\R\setminus\N$, define
\[
H_\gamma(z) = -\sum_{n=1}^\infty \frac \gamma{n-\gamma}z^n.
\]
Note that the power series defining $H_\gamma (z)$ converges for $|z|<1$ by the ratio test. (One could also define $H_\gamma(z) = -\gamma\Phi(z,1,-\gamma)-1$ where $\Phi$ is the Hurwitz--Lerch transcendent.)
\end{definition}

\begin{lemma} \label{lint lemma}
Let $\gamma>0$ such that $\gamma\not\in\N$.
\begin{enumerate}
\item For $0\le z\le\frac12$, we have $H_\gamma(z) \ll_\gamma z$.
\item For $0\le z<1$, we have $H_\gamma(z) = \gamma\log(1-z) + O_\gamma(1)$.
\end{enumerate}
\end{lemma}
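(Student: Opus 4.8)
The plan is to compare $H_\gamma$ term-by-term against two elementary power series: the geometric series for part~(a), and the series for $-\log(1-z)$ for part~(b). The only input needed about $\gamma$ is that, since $\gamma\notin\N$, the sequence $1/|n-\gamma|$ is bounded (indeed it tends to~$0$), so that $M(\gamma):=\sup_{n\ge1}1/|n-\gamma|$ and $S(\gamma):=\sum_{n\ge1}1/(n|n-\gamma|)$ are both finite quantities depending only on~$\gamma$; the latter converges because $1/(n|n-\gamma|)\asymp_\gamma 1/n^2$.

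For part~(a), I would simply estimate, for $0\le z\le\frac12$,
\[
|H_\gamma(z)| \le \gamma\sum_{n=1}^\infty \frac{z^n}{|n-\gamma|} \le \gamma M(\gamma)\sum_{n=1}^\infty z^n = \gamma M(\gamma)\,\frac z{1-z} \le 2\gamma M(\gamma)\,z,
\]
which is the desired bound $H_\gamma(z)\ll_\gamma z$.

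For part~(b), I would use $-\log(1-z)=\sum_{n\ge1}z^n/n$ (valid for $0\le z<1$) to write
\[
H_\gamma(z) - \gamma\log(1-z) = -\gamma\sum_{n=1}^\infty\left(\frac1{n-\gamma}-\frac1n\right)z^n = -\gamma^2\sum_{n=1}^\infty\frac{z^n}{n(n-\gamma)},
\]
and then bound the last sum in absolute value by $\sum_{n\ge1}1/(n|n-\gamma|)=S(\gamma)$, uniformly for $0\le z<1$. This gives $|H_\gamma(z)-\gamma\log(1-z)|\le \gamma^2 S(\gamma)=O_\gamma(1)$, as claimed; note that the uniformity up to $z\to1^-$ is automatic because the corrected series converges absolutely on $|z|\le1$.

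There is no genuine obstacle here: the lemma is a routine consequence of the fact that subtracting off the logarithmic singularity leaves a power series whose coefficients decay like $1/n^2$. The only point requiring a word of care is that when $\gamma>1$ some of the denominators $n-\gamma$ are negative, but since every estimate above is performed with absolute values this causes no difficulty.
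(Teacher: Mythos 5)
Your proof is correct and takes essentially the same approach as the paper: part (b) is the same computation (subtract the logarithm series to obtain $-\gamma^2\sum_n z^n/(n(n-\gamma))$ and bound it by the convergent constant $\sum_n 1/(n|n-\gamma|)$). For part (a) you give a direct geometric-series estimate where the paper simply cites analyticity of $H_\gamma$ on a neighborhood of $[0,\tfrac12]$ together with $H_\gamma(0)=0$; your version is a bit more explicit but amounts to the same fact.
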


\begin{proof}
Part (a) follows simply from the fact that $H_\gamma$ is analytic on a neighborhood of $[0,\frac12]$ and $H_\gamma(0)=0$.
Using Definition~\ref{H def} and the power series for $\log(1-z)$,
\begin{align*}
H_\gamma(z)-\gamma\log(1-z)&=-\sum_{n=1}^{\infty} \frac{\gamma z^n}{n-\gamma} - \gamma\sum_{n=1}^\infty \frac{-z^n}n \\
&= -\gamma^2\sum_{n=1}^\infty \frac{z^n}{n(n-\gamma)} \ll_\gamma \sum_{n=1}^\infty \frac{1}{n|n-\gamma|} \ll_\gamma 1,
\end{align*}
which establishes part~(b).
\end{proof}

\begin{lemma} \label{indefint lemma}
Let $\gamma>0$ such that $\gamma\not\in\N$, and let $x>1$. For $0<z<1$,
\[
\frac d{dz}\bigg( \frac{H_\gamma(z)}{\gamma(z\log x)^\gamma} \bigg) = -\frac 1{(1-z)(z\log x)^\gamma}.
\]
\end{lemma}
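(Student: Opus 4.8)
The plan is to compute the derivative directly from the power‑series definition of $H_\gamma$ in Definition~\ref{H def}. Note first that the hypotheses $x>1$ and $0<z<1$ guarantee $z\log x>0$, so the fractional power $(z\log x)^\gamma$ is unambiguous. Pulling the constants out front, rewrite
\[
\frac{H_\gamma(z)}{\gamma(z\log x)^\gamma} = -\frac1{(\log x)^\gamma}\sum_{n=1}^\infty \frac{z^{n-\gamma}}{n-\gamma}.
\]
Since $\sum_{n\ge1}\frac{\gamma}{n-\gamma}z^n$ converges for $|z|<1$, so does the series on the right on $0<z<1$; moreover the series obtained by differentiating it term by term, $\sum_{n\ge1}z^{n-\gamma-1}=z^{-\gamma-1}\sum_{n\ge1}z^n$, also converges for $|z|<1$. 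Hence term‑by‑term differentiation is legitimate on any compact subinterval of $(0,1)$.

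Carrying this out, $\frac{d}{dz}\bigl(\frac{z^{n-\gamma}}{n-\gamma}\bigr)=z^{n-\gamma-1}$, so
\[
\frac d{dz}\bigg(\frac{H_\gamma(z)}{\gamma(z\log x)^\gamma}\bigg) = -\frac{z^{-\gamma-1}}{(\log x)^\gamma}\sum_{n=1}^\infty z^n = -\frac{z^{-\gamma-1}}{(\log x)^\gamma}\cdot\frac z{1-z} = -\frac1{(1-z)(z\log x)^\gamma},
\]
where the middle step uses the geometric series $\sum_{n\ge1}z^n=z/(1-z)$. This is exactly the claimed identity.

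If one prefers to avoid manipulating the series inside the derivative, an alternative is first to record the first‑order linear differential equation satisfied by $H_\gamma$: writing $\frac n{n-\gamma}=1+\frac\gamma{n-\gamma}$ in the series for $H_\gamma'(z)$ gives $H_\gamma'(z)=-\frac\gamma{1-z}+\frac\gamma z H_\gamma(z)$, i.e.\ $zH_\gamma'(z)-\gamma H_\gamma(z)=-\gamma z/(1-z)$; then the quotient rule applied to $H_\gamma(z)/z^\gamma$ produces the same conclusion. Either way there is no genuine obstacle in this lemma: the only point deserving a word of justification is the interchange of differentiation and summation, which is immediate from convergence of the power series within its disk of convergence.
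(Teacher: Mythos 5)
Your proposal is correct and follows essentially the same route as the paper: both arguments differentiate the defining series term by term and finish by summing the geometric series $\sum_{n\ge1}z^{n-1}=1/(1-z)$; you merely absorb the factor $z^{-\gamma}$ into each term first, whereas the paper applies the quotient rule to $H_\gamma(z)/(z\log x)^\gamma$ and then uses the cancellation $\frac{n}{n-\gamma}-\frac{\gamma}{n-\gamma}=1$, which is the same computation in a different order. Your care about justifying term-by-term differentiation (the series $\sum_n z^{n-\gamma}/(n-\gamma)$ is not literally a power series, but uniform convergence of the differentiated series on compact subintervals of $(0,1)$ suffices) is a welcome extra that the paper passes over silently.
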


\begin{proof}
We differentiate the power series in Definition~\ref{H def} term by term to obtain
\begin{align*}
\frac d{dz}\bigg( \frac{H_\gamma(z)}{\gamma(z\log x)^\gamma} \bigg)&=\frac d{dz}\bigg( \frac{-\sum_{n=1}^\infty \frac \gamma{n-\gamma}z^n}{\gamma(z\log x)^\gamma} \bigg)\\
&=\frac{-\gamma(z\log x)^\gamma\sum_{n=1}^\infty \frac {n\gamma}{n-\gamma}z^{n-1}+\gamma^2(\log x)^\gamma z^{\gamma-1}\sum_{n=1}^\infty \frac \gamma{n-\gamma}z^n}{\gamma^2(z\log x)^{2\gamma}}\\
&=\frac{-\gamma^2(\log x)^\gamma z^\gamma\big( \sum_{n=1}^\infty \frac {n}{n-\gamma}z^{n-1}-\sum_{n=1}^\infty \frac \gamma{n-\gamma}z^{n-1} \big)}{\gamma^2(z\log x)^{2\gamma}}\\
&=\frac{-\sum_{n=1}^\infty z^{n-1}}{(z\log x)^\gamma} = -\frac 1{(1-z)(z\log x)^\gamma}
\end{align*}
as desired.
\end{proof}

\begin{lemma} \label{indefint}
Let $\gamma>0$ such that $\gamma\not\in\N$, and let $x>1$. For $1<u<x$,
\[
\frac d{du}\bigg( \frac{H_\gamma\big(1-\frac{\log u}{\log x}\big)}{\gamma(\log(x/u))^\gamma} \bigg) = \frac 1{(u\log u)(\log(x/u))^\gamma}.
\]
\end{lemma}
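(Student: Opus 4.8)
The plan is to deduce Lemma~\ref{indefint} from Lemma~\ref{indefint lemma} by the substitution $z = 1 - \frac{\log u}{\log x}$, so that $z\log x = \log x - \log u = \log(x/u)$ and the two statements are literally the same identity viewed from different variables. First I would record the elementary computation $\frac{dz}{du} = -\frac{1}{u\log x}$, valid for $1<u<x$, and note that as $u$ ranges over $(1,x)$ the quantity $z$ ranges over $(0,1)$, so that Lemma~\ref{indefint lemma} applies. Then I would apply the chain rule:
\[
\frac{d}{du}\bigg( \frac{H_\gamma\big(1-\frac{\log u}{\log x}\big)}{\gamma(\log(x/u))^\gamma} \bigg)
= \frac{d}{dz}\bigg( \frac{H_\gamma(z)}{\gamma(z\log x)^\gamma} \bigg) \cdot \frac{dz}{du},
\]
where on the right-hand side I have used $(\log(x/u))^\gamma = (z\log x)^\gamma$ to match the form appearing in Lemma~\ref{indefint lemma}.

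Next I would substitute the value of the inner derivative supplied by Lemma~\ref{indefint lemma}, namely $-\frac{1}{(1-z)(z\log x)^\gamma}$, together with $\frac{dz}{du} = -\frac{1}{u\log x}$, to get
\[
\frac{d}{du}\bigg( \frac{H_\gamma\big(1-\frac{\log u}{\log x}\big)}{\gamma(\log(x/u))^\gamma} \bigg)
= \bigg( -\frac{1}{(1-z)(z\log x)^\gamma} \bigg)\bigg( -\frac{1}{u\log x} \bigg)
= \frac{1}{u(1-z)(\log x)(z\log x)^\gamma}.
\]
Finally I would translate back to the variable $u$: since $1-z = \frac{\log u}{\log x}$, we have $(1-z)\log x = \log u$, and since $z\log x = \log(x/u)$ the denominator becomes $u\log u\,(\log(x/u))^\gamma$, which is exactly the claimed expression.

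I do not anticipate any genuine obstacle here; the lemma is a change-of-variables corollary of the preceding one, and the only points requiring a word of care are the verification that the substitution maps the relevant $u$-interval $(1,x)$ into the $z$-interval $(0,1)$ on which Lemma~\ref{indefint lemma} is valid, and the bookkeeping of the two minus signs from $\frac{dz}{du}$ and from the inner derivative, which cancel to produce the positive sign in the statement. One could alternatively prove it directly by differentiating the composite function and invoking Lemma~\ref{indefint lemma}'s term-by-term differentiation of the power series, but routing through the already-established identity is cleaner.
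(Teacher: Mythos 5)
Your proposal is correct and follows exactly the paper's own proof: the same substitution $z = 1 - \frac{\log u}{\log x}$, the same application of the chain rule together with Lemma~\ref{indefint lemma}, and the same cancellation of the two minus signs, with $1-z = \frac{\log u}{\log x}$ producing the factor $\log u$ in the denominator. No issues.
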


\begin{proof}
Using the change of variables $z=1-(\log u)/\log x$, so that $z\log x=\log(x/u)$ and $\frac{dz}{du} = -1/(u\log x)$, we have
\[
\frac d{du}\bigg( \frac{H_\gamma\big(1-\frac{\log u}{\log x}\big)}{\gamma(\log(x/u))^\gamma} \bigg) = \frac d{dz}\bigg( \frac{H_\gamma(z)}{\gamma(z\log x)^\gamma} \bigg) \bigg( {-}\frac 1{u\log x} \bigg).
\]
The assumption $1<u<x$ implies that $0<z<1$, and so by Lemma~\ref{indefint lemma},
\begin{align*}
\frac d{du}\bigg( \frac{H_\gamma\big(1-\frac{\log u}{\log x}\big)}{\gamma(\log(x/u))^\gamma} \bigg) &= -\frac 1{(1-z)(z\log x)^\gamma} \bigg( {-}\frac 1{u\log x} \bigg) \\
&= \frac 1{((\log u)/\log x)(\log(x/u))^\gamma} \frac 1{u\log x}
\end{align*}
as desired.
\end{proof}

\begin{lemma} \label{lint}
Let $\gamma>0$. For $y\ge 4$,
\[
\int_2^{\sqrt y} \frac 1{(u\log u)(\log(y/u))^\gamma} \,du = \frac{\log\log y}{(\log y)^\gamma}+O_\gamma\bigg( \frac 1{(\log y)^\gamma}\bigg).
\]
\end{lemma}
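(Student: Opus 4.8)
The plan is to evaluate the integral $\int_2^{\sqrt y} \frac{du}{(u\log u)(\log(y/u))^\gamma}$ by splitting the range of integration at a point where the behavior of the two logarithmic factors changes character. On the interval $2 \le u \le \sqrt y$ we have $\log(y/u) \ge \tfrac12\log y$, so the factor $(\log(y/u))^{-\gamma}$ is roughly constant (of size $(\log y)^{-\gamma}$ up to a bounded factor), while $\int \frac{du}{u\log u} = \log\log u$ contributes the main growth. First I would split the integral at, say, $u = (\log y)^{10}$ (any fixed power works). On the lower piece $2 \le u \le (\log y)^{10}$, we have $\log(y/u) = \log y - \log u = (\log y)(1 + O((\log\log y)/\log y))$, so $(\log(y/u))^{-\gamma} = (\log y)^{-\gamma}(1 + O((\log\log y)/\log y))$; pulling this out and integrating $\frac{du}{u\log u}$ gives
\[
\frac1{(\log y)^\gamma}\bigl(\log\log\bigl((\log y)^{10}\bigr) - \log\log 2\bigr)\bigl(1 + O(\tfrac{\log\log y}{\log y})\bigr) = \frac{\log\log\log y}{(\log y)^\gamma} + O_\gamma\Bigl(\frac1{(\log y)^\gamma}\Bigr),
\]
which is absorbed into the error term since $\log\log\log y \ll_\gamma 1$ is false but $\log\log\log y = O_\gamma(\log\log y)$—wait, more carefully, this piece is $O_\gamma(\log\log\log y / (\log y)^\gamma)$, and since we want an error of size $(\log y)^{-\gamma}$ this is not quite good enough, so I would instead split at $u = \exp(\sqrt{\log y})$ or simply handle the whole integral at once as below.

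Let me restructure. The cleaner approach: write the integral as $\frac1{(\log y)^\gamma}\int_2^{\sqrt y} \frac{du}{u\log u} + \int_2^{\sqrt y}\frac{1}{u\log u}\Bigl(\frac1{(\log(y/u))^\gamma} - \frac1{(\log y)^\gamma}\Bigr)\,du$. The first term equals $\frac1{(\log y)^\gamma}(\log\log\sqrt y - \log\log 2) = \frac{\log\log y}{(\log y)^\gamma} + O_\gamma((\log y)^{-\gamma})$, which is exactly the claimed main term. For the second term, I would use the bound $\bigl|\frac1{(\log(y/u))^\gamma} - \frac1{(\log y)^\gamma}\bigr| \ll_\gamma \frac{\log u}{(\log y)^{\gamma+1}}$ valid for $2 \le u \le \sqrt y$ (this follows from the mean value theorem applied to $v \mapsto v^{-\gamma}$ on $[\log(y/u), \log y] \subseteq [\tfrac12\log y, \log y]$, since the derivative there is $\ll_\gamma (\log y)^{-\gamma-1}$ and $\log y - \log(y/u) = \log u$). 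Substituting this bound, the second term is $\ll_\gamma \frac1{(\log y)^{\gamma+1}}\int_2^{\sqrt y}\frac{\log u}{u\log u}\,du = \frac1{(\log y)^{\gamma+1}}\int_2^{\sqrt y}\frac{du}{u} = \frac{\log\sqrt y - \log 2}{(\log y)^{\gamma+1}} \ll \frac1{(\log y)^\gamma}$, which is within the allowed error.

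The main obstacle — really the only subtlety — is getting the error term down to $(\log y)^{-\gamma}$ rather than something like $(\log y)^{-\gamma}\log\log\log y$; this is why one should not naively pull the factor $(\log(y/u))^{-\gamma}$ outside as a constant, but instead estimate the difference $(\log(y/u))^{-\gamma} - (\log y)^{-\gamma}$ and exploit the cancellation that $\log u$ in the numerator cancels the $\log u$ in the denominator of $\frac1{u\log u}$, leaving a clean $\int du/u$. Everything else is a routine computation of $\int_2^{\sqrt y} \frac{du}{u\log u} = \log\log\sqrt y - \log\log 2$ and the observation that $\log\log\sqrt y = \log\log y - \log 2$.
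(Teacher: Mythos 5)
Your final argument is correct and is essentially the paper's own proof: the paper likewise writes $(\log(y/u))^{-\gamma}=(\log y)^{-\gamma}\bigl(1-\tfrac{\log u}{\log y}\bigr)^{-\gamma}=(\log y)^{-\gamma}+O_\gamma\bigl((\log y)^{-\gamma-1}\log u\bigr)$ for $u\le\sqrt y$, extracts the main term from $\int_2^{\sqrt y}\frac{du}{u\log u}=\log\log\sqrt y-\log\log 2$, and uses the same cancellation of $\log u$ against the $\log u$ in the denominator to bound the error by $(\log y)^{-\gamma-1}\int_2^{\sqrt y}\frac{du}{u}\ll(\log y)^{-\gamma}$. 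The false start with the split at $(\log y)^{10}$ is harmless since you correctly abandoned it.
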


\begin{proof}
Since $u\le\sqrt y$ in the integrand, we may write
\begin{align*}
(\log(y/u))^{-\gamma} &= (\log y)^{-\gamma} \bigg( 1-\frac{\log u}{\log y} \bigg)^{-\gamma} \\
&= (\log y)^{-\gamma} \bigg( 1 + O_\gamma\bigg( \frac{\log u}{\log y} \bigg) \bigg) = (\log y)^{-\gamma} + O_\gamma \big( (\log y)^{-\gamma-1}\log u \big),
\end{align*}
and therefore
\begin{align*}
\int_2^{\sqrt y} & \frac 1{(u\log u)(\log(y/u))^\gamma} \,du \\
&= (\log y)^{-\gamma} \int_2^{\sqrt y} \frac 1{u\log u} \,du + O_\gamma\bigg( (\log y)^{-\gamma-1} \int_2^{\sqrt y} \frac{\log u}{u\log u} \,du \bigg) \\
&= (\log y)^{-\gamma} (\log\log \sqrt y-\log\log2) + O_\gamma\big( (\log y)^{-\gamma-1} (\log\sqrt y-\log 2) \big)
\end{align*}
which implies the statement of the lemma since $\log\log\sqrt y=\log\log y+O(1)$.
\end{proof}

\begin{lemma} \label{lsum}
Let $\gamma>0$ such that $\gamma\not\in\N$, let~$q$ be prime, and let $\alpha\in\N$. For $y\ge 4$,
\[
\sum_{\substack{p\le \sqrt y\\ \nu_q(p-1)=\alpha}} \frac1{p(\log(y/p))^\gamma} = \frac{\log\log y}{q^\alpha(\log y)^\gamma}+O_\gamma\bigg( \frac 1{(\log y)^\gamma}\bigg).
\]
\end{lemma}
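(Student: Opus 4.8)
The plan is to split the sum over primes $p \le \sqrt y$ with $\nu_q(p-1) = \alpha$ into a "main range" of small primes (where we can accurately replace the sum by an integral against a smooth density) and a tail of larger primes (which contributes only to the error term). The key analytic input is that primes $p$ with $\nu_q(p-1) = \alpha$ have density $1/q^\alpha$ among all primes in an appropriate asymptotic sense: by the prime number theorem for arithmetic progressions, the number of such primes up to $t$ is
\[
\pi_\alpha(t) := \#\{p \le t \colon \nu_q(p-1) = \alpha\} = \frac1{q^\alpha}\li(t) + O_{q,\alpha}\!\big(t \exp(-c\sqrt{\log t})\big),
\]
since $\{p \colon \nu_q(p-1) = \alpha\}$ is the set of primes in the progressions $\equiv 1 \pmod{q^\alpha}$ but $\not\equiv 1 \pmod{q^{\alpha+1}}$. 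Summation by parts against the weight $p^{-1}(\log(y/p))^{-\gamma}$ then converts the sum into $\int_2^{\sqrt y} (t \log t)^{-1} (\log(y/t))^{-\gamma} \, dt$ times the density $1/q^\alpha$, plus an error from the $O$-term in the prime counting estimate, plus boundary terms. The first piece is exactly $q^{-\alpha}$ times the integral evaluated in Lemma~\ref{lint}, which produces the claimed main term $\log\log y/(q^\alpha(\log y)^\gamma)$ together with an admissible $O_\gamma(1/(\log y)^\gamma)$.

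First I would make the partial summation rigorous: writing $f(t) = (t\log t)^{-1}(\log(y/t))^{-\gamma}$ for $2 \le t \le \sqrt y$, we have $\sum_{p \le \sqrt y,\, \nu_q(p-1)=\alpha} p^{-1}(\log(y/p))^{-\gamma}\log p \cdot (\log p)^{-1}$; more cleanly, set $g(t) = (\log(y/t))^{-\gamma}$ and sum $g(p)/p$ over the relevant primes using the estimate $\sum_{p \le t,\, \nu_q(p-1)=\alpha} 1/p = q^{-\alpha}\log\log t + M_{q,\alpha} + O_{q,\alpha}(1/\log t)$, which follows from $\pi_\alpha$ above by partial summation (a Mertens-type theorem restricted to these progressions). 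Then Abel summation against the smooth, bounded-variation function $g$ on $[2,\sqrt y]$ gives
\[
\sum_{\substack{p\le\sqrt y\\\nu_q(p-1)=\alpha}} \frac{g(p)}p = \frac1{q^\alpha}\int_2^{\sqrt y} \frac{g(t)}{t\log t}\,dt + g(\sqrt y)\Big(M_{q,\alpha} + O\big(\tfrac1{\log y}\big)\Big) + \int_2^{\sqrt y} |g'(t)|\,O\big(\tfrac1{\log t}\big)\,dt.
\]
The leading integral is $q^{-\alpha}$ times the quantity computed in Lemma~\ref{lint}. The boundary term is $O_\gamma((\log y)^{-\gamma})$ since $g(\sqrt y) = (\log\sqrt y)^{-\gamma} \asymp_\gamma (\log y)^{-\gamma}$. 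For the final error integral, one checks $g'(t) = \gamma(\log(y/t))^{-\gamma-1}/t \le \gamma (\log\sqrt y)^{-\gamma-1}/t$ on $[2,\sqrt y]$, so $\int_2^{\sqrt y}|g'(t)|(\log t)^{-1}\,dt \ll_\gamma (\log y)^{-\gamma-1}\int_2^{\sqrt y} dt/(t\log t) \ll_\gamma (\log y)^{-\gamma}\log\log y$, which is unfortunately slightly too large; I would instead bound it by $(\log y)^{-\gamma-1}\int_2^{\sqrt y}dt/t \ll_\gamma (\log y)^{-\gamma}$, using the cruder estimate $1/\log t \le 1$. That keeps everything within the stated error.

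The main obstacle I anticipate is bookkeeping the error terms so that nothing exceeds $O_\gamma((\log y)^{-\gamma})$: the $\log\log y$ can only appear attached to the exact main-term integral from Lemma~\ref{lint}, so the partial-summation error pieces must each be shown to be genuinely of size $(\log y)^{-\gamma}$ and not $(\log y)^{-\gamma}\log\log y$. Handling this carefully—splitting $g'$ appropriately and using that $(\log(y/t))^{-\gamma-1}$ is uniformly $\ll_\gamma (\log y)^{-\gamma-1}$ on the range $t \le \sqrt y$—is the delicate point. Everything else reduces to the Siegel–Walfisz form of the prime number theorem for the progressions defining $\nu_q(p-1) = \alpha$ and a direct appeal to Lemma~\ref{lint}; no new ideas are needed beyond routine analytic number theory. (In fact the hypergeometric machinery of Definition~\ref{H def} and Lemmas~\ref{indefint lemma}--\ref{indefint} is not needed for this particular lemma—only Lemma~\ref{lint} is—though it will be used for the companion Lemma~\ref{usum} over the complementary range $\sqrt y < p \le y$.)
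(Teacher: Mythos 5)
Your proposal follows the paper's proof very closely: both define the partial sums $M(t)=\sum_{p\le t,\,\nu_q(p-1)=\alpha}1/p$, invoke a Mertens-type theorem for arithmetic progressions to write $M(t)=q^{-\alpha}\log\log t + \text{const} + O(1/\log t)$, perform Riemann--Stieltjes/Abel summation against $(\log(y/t))^{-\gamma}$, and invoke Lemma~\ref{lint} for the main integral while bounding the remainder by integration by parts. One small slip in the bookkeeping: you bound the error integral by $(\log y)^{-\gamma-1}\int_2^{\sqrt y}\frac{dt}{t\log t}$ and then write the product as $(\log y)^{-\gamma}\log\log y$, but it should be $(\log y)^{-\gamma-1}\log\log y$, which is already $\ll(\log y)^{-\gamma}$ since $\log\log y / \log y \to 0$ --- so your worry that the ``natural'' bound is too large is unfounded and the detour to the cruder estimate $1/\log t \le 1$ is unnecessary (though it does also work). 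This matches exactly what the paper does, which gets $\log\log y/(\log y)^{\gamma+1}$ from this integral and absorbs it into $O_\gamma((\log y)^{-\gamma})$. Aside from that minor arithmetic wobble, the proposal is correct and uses the same approach as the paper.
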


\begin{proof}
If we define
\begin{equation} \label{Mx def}
M(x)=\sum_{\substack{p\le x\\\nu_q(p-1)=\alpha}} \frac1p = \sum_{\substack{p\le x\\p\equiv 1\mod{q^\alpha}}} \frac1p - \sum_{\substack{p\le x\\p\equiv 1\mod {q^{\alpha+1}}}} \frac1p,
\end{equation}
then a Mertens-type formula for arithmetic progressions~\cite[Corollary~4.12]{MV} shows that there exist constants $c_{q^\alpha}$ and $c_{q^{\alpha+1}}$ such that
\begin{align*}
M(x) &= \bigg( \frac{\log\log x}{\phi(q^\alpha)} + c_{q^\alpha} + O\bigg( \frac1{\log x} \bigg) \bigg) - \bigg( \frac{\log\log x}{\phi(q^{\alpha+1})} + c_{q^{\alpha+1}} + O\bigg( \frac1{\log x} \bigg) \bigg) \\
&= \frac{\log\log x}{q^\alpha} + c_{q^\alpha} - c_{q^{\alpha+1}} + O\bigg( \frac1{\log x} \bigg)
\end{align*}
for $x\ge2$. Setting
\begin{equation} \label{Rx def}
R(x) = M(x) - \bigg( \frac{\log\log x}{q^\alpha} + c_{q^\alpha} - c_{q^{\alpha+1}} \bigg) \ll \frac1{\log x},
\end{equation}
it follows that
\begin{align}
\sum_{\substack{p\le \sqrt y\\ \nu_q(p-1)=\alpha}} \frac1{p(\log(y/p))^\gamma} &= \int_2^{\sqrt y} \frac1{(\log(y/u))^\gamma} \,dM(u) \notag \\
&=\int_2^{\sqrt y} \frac1{(\log(y/u))^\gamma} \,d\bigg( \frac{\log\log u}{q^\alpha} + c_{q^\alpha} - c_{q^{\alpha+1}} + R(u) \bigg) \notag \\
&=\frac 1{q^\alpha} \int_2^{\sqrt y} \frac1{(\log(y/u))^\gamma} \frac {du}{u\log u} + \int_2^{\sqrt y} \frac1{(\log(y/u))^\gamma} \,dR(u) \notag \\
&=\frac 1{q^\alpha} \bigg( \frac{\log\log y}{(\log y)^\gamma}+O_\gamma\bigg( \frac1{(\log y)^\gamma}\bigg) \bigg) + \int_2^{\sqrt y} \frac1{(\log(y/u))^\gamma} \,dR(u) \label{RS int}
\end{align}
by Lemma~\ref{lint}.
On the other hand, integrating by parts yields
\begin{align*}
\int_2^{\sqrt y} \frac1{(\log(y/u))^\gamma} \,dR(u) &= \frac{R(u)}{(\log(y/u))^\gamma} \bigg|_2^{\sqrt y} - \int_2^{\sqrt y} R(u) \frac d{du} \bigg( \frac1{(\log(y/u))^\gamma} \bigg) \,du \\
&= \frac{R(\sqrt y)}{(\log\sqrt y)^\gamma} - \frac{R(2)}{(\log(y/2))^\gamma} - \int_2^{\sqrt y} \frac{\gamma R(u)}{u(\log(y/u))^{\gamma+1}} \,du \\
&\ll_\gamma \frac{1/\log y}{(\log y)^\gamma} + \frac1{(\log y)^\gamma} + \int_2^{\sqrt y} \frac{1/\log u}{u(\log(y/u))^{\gamma+1}} \,du \\
&\ll_\gamma \frac1{(\log y)^\gamma} + \frac1{(\log y)^{\gamma+1}} \int_2^{\sqrt y} \frac1{u\log u} \,du \\
&\ll \frac1{(\log y)^\gamma} + \frac{\log\log \sqrt y- \log\log 2}{(\log y)^{\gamma+1}},
\end{align*}
which, combined with equation~\eqref{RS int}, establishes the lemma.
\end{proof}

\begin{lemma} \label{uint}
Let $\gamma>0$ such that $\gamma\not\in\N$. For $y\ge 9$,
\[
\int_{\sqrt y}^{y/3} \frac 1{(u\log u)(\log(y/u))^\gamma} \,du \ll_\gamma \frac 1{(\log y)^{\min\{\gamma,1\}}}.
\]
\end{lemma}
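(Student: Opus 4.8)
The plan is to use the antiderivative supplied by Lemma~\ref{indefint}, which is precisely why $H_\gamma$ and its elementary estimates were developed. Taking $x=y$ in that lemma, the integrand $1/\big((u\log u)(\log(y/u))^\gamma\big)$ is the $u$-derivative of $H_\gamma\big(1-\tfrac{\log u}{\log y}\big)\big/\big(\gamma(\log(y/u))^\gamma\big)$ on the interval $\sqrt y < u < y/3 < y$ (note $\sqrt y\ge3>1$ when $y\ge9$, so the hypothesis $1<u<x$ of Lemma~\ref{indefint} holds), so by the fundamental theorem of calculus
\[
\int_{\sqrt y}^{y/3} \frac{du}{(u\log u)(\log(y/u))^\gamma} = \frac{H_\gamma\!\big(\tfrac{\log 3}{\log y}\big)}{\gamma(\log 3)^\gamma} - \frac{H_\gamma(1/2)}{\gamma(\tfrac12\log y)^\gamma},
\]
using the identities $1-\tfrac{\log(y/3)}{\log y} = \tfrac{\log 3}{\log y}$ and $1-\tfrac{\log\sqrt y}{\log y} = \tfrac12$ to simplify the two boundary terms.

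It then remains to bound those two boundary terms. For the term at $u=\sqrt y$: the quantity $H_\gamma(1/2)$ is a constant depending only on $\gamma$ and $(\tfrac12\log y)^\gamma \gg_\gamma (\log y)^\gamma$, so this term is $\ll_\gamma (\log y)^{-\gamma}$. For the term at $u=y/3$: the hypothesis $y\ge9$ guarantees $\tfrac{\log 3}{\log y} \le \tfrac{\log 3}{\log 9} = \tfrac12$, so Lemma~\ref{lint lemma}(a) applies and gives $H_\gamma\!\big(\tfrac{\log 3}{\log y}\big) \ll_\gamma \tfrac{\log 3}{\log y} \ll \tfrac1{\log y}$; since $(\log 3)^\gamma$ is a fixed positive constant, this term is $\ll_\gamma (\log y)^{-1}$. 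Combining the two estimates yields $\int_{\sqrt y}^{y/3} \ll_\gamma (\log y)^{-\gamma} + (\log y)^{-1} \ll_\gamma (\log y)^{-\min\{\gamma,1\}}$, which is the claim. (The excluded value $\gamma=1$ would cause no difficulty here, but it is ruled out by $\gamma\notin\N$ in any case.)

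There is no real obstacle in this argument; the only point deserving a moment's care is matching the numerical threshold $y\ge9$ in the statement to the inequality $\tfrac{\log3}{\log y}\le\tfrac12$ needed to invoke Lemma~\ref{lint lemma}(a). If one preferred an argument that avoids $H_\gamma$ entirely, the substitution $v=\log u/\log y$ reduces the integral to $(\log y)^{-\gamma}\int_{1/2}^{\,1-\log3/\log y} \frac{dv}{v(1-v)^\gamma}$; bounding $1/v\le2$ on the range of integration and then integrating $(1-v)^{-\gamma}$ explicitly — splitting into the cases $\gamma<1$ (where the integral is $O_\gamma(1)$) and $\gamma>1$ (where it is $O_\gamma\big((\log y)^{\gamma-1}\big)$) — produces the same final bound.
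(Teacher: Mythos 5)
Your proof is correct and follows essentially the same route as the paper's: both apply the antiderivative from Lemma~\ref{indefint}, evaluate at the endpoints $u=\sqrt y$ and $u=y/3$, and bound the resulting boundary terms by $O_\gamma((\log y)^{-\gamma})$ and (via Lemma~\ref{lint lemma}(a)) $O_\gamma((\log y)^{-1})$ respectively. Your explicit check that $\tfrac{\log 3}{\log y}\le\tfrac12$ when $y\ge9$ is a nice touch of care that the paper leaves implicit.
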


\begin{proof}
By Lemma~\ref{indefint},
\begin{align*}
\int_{\sqrt y}^{y/3} \frac 1{(u\log u)(\log(y/u))^\gamma} \,du &= \frac{H_\gamma(1-\frac{\log u}{\log y})}{\gamma(\log(y/u))^\gamma}\bigg|_{\sqrt y}^{y/3} \\
&= \frac{H_\gamma(\frac{\log3}{\log y})}{\gamma(\log3)^\gamma} - \frac{H_\gamma(1/2)}{\gamma\big( \frac 12 \log y\big)^\gamma} \\
&\ll_\gamma \bigg| H_\gamma\bigg( \frac{\log3}{\log y} \bigg) \bigg| + \frac1{(\log y)^\gamma} \ll_\gamma \frac{\log3}{\log y} + \frac1{(\log y)^\gamma}
\end{align*}
by Lemma~\ref{lint lemma}(a); this bound is equivalent to the statement of the lemma.
\end{proof}

\begin{lemma} \label{usum}
Let $\gamma>0$ such that $\gamma\not\in\N$, let~$q$ be prime, and let $\alpha\in\N$. For $y\ge9$,
\[
\sum_{\substack{\sqrt y<p\le y/3\\ \nu_q(p-1)=\alpha}} \frac1{p(\log(y/p))^\gamma} \ll_\gamma \frac1{(\log y)^{\min\{\gamma,1\}}}.
\]
\end{lemma}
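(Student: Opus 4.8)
The plan is to follow the proof of Lemma~\ref{lsum} essentially verbatim, with the range $[2,\sqrt y]$ replaced by $[\sqrt y,y/3]$ and with Lemma~\ref{uint} used in place of Lemma~\ref{lint}. It is worth noting at the outset why a crude estimate will not do: on the range of summation we have $(\log(y/p))^{-\gamma}\le(\log3)^{-\gamma}\ll_\gamma1$, but pulling this factor out and summing $1/p$ over $\sqrt y<p\le y/3$ only gives a bound of $O(1)$, since $\sum_{\sqrt y<p\le y/3}1/p=\log\log(y/3)-\log\log\sqrt y=O(1)$. The saving of a power of $\log y$ must therefore come from a partial-summation argument against a Mertens-type asymptotic, exactly as in Lemma~\ref{lsum}.

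Concretely, I would keep the functions $M(x)$ and $R(x)$ from equations~\eqref{Mx def} and~\eqref{Rx def}, so that $M(x)=\frac{\log\log x}{q^\alpha}+c_{q^\alpha}-c_{q^{\alpha+1}}+R(x)$ with $R(x)\ll1/\log x$, and write the sum in question as a Riemann--Stieltjes integral
\[
\sum_{\substack{\sqrt y<p\le y/3\\\nu_q(p-1)=\alpha}}\frac1{p(\log(y/p))^\gamma}=\int_{\sqrt y}^{y/3}\frac{dM(u)}{(\log(y/u))^\gamma}=\frac1{q^\alpha}\int_{\sqrt y}^{y/3}\frac{du}{u\log u\,(\log(y/u))^\gamma}+\int_{\sqrt y}^{y/3}\frac{dR(u)}{(\log(y/u))^\gamma}.
\]
The first integral is $\ll_\gamma(\log y)^{-\min\{\gamma,1\}}$ directly by Lemma~\ref{uint} (and $1/q^\alpha\le1$). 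For the second, I would integrate by parts: the boundary contributions are $R(y/3)/(\log3)^\gamma\ll_\gamma1/\log y$ and $R(\sqrt y)/(\tfrac12\log y)^\gamma\ll_\gamma(\log y)^{-\gamma-1}$ (using $\log(y/3)\ge\tfrac12\log y$ for $y\ge9$), and the remaining integral $\int_{\sqrt y}^{y/3}\gamma R(u)\,u^{-1}(\log(y/u))^{-\gamma-1}\,du$ is $\ll_\gamma(\log\sqrt y)^{-1}\int_{\sqrt y}^{y/3}u^{-1}(\log(y/u))^{-\gamma-1}\,du\ll_\gamma1/\log y$, where the last integral is elementary (substitute $L=\log(y/u)$ to get $\int_{\log3}^{(\log y)/2}L^{-\gamma-1}\,dL=O_\gamma(1)$, since $\gamma+1>0$). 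Collecting the three pieces and using $1/\log y\le(\log y)^{-\min\{\gamma,1\}}$ for $y\ge9$ then yields the claimed bound.

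I do not anticipate a serious obstacle, since every ingredient is already available; the only point requiring a little attention is checking that the error contributions arising from $R(u)$ — in particular the boundary term at $u=y/3$, where $\log(y/u)$ is merely the constant $\log3$ rather than a quantity of order $\log y$ — are still $O(1/\log y)$ and hence absorbed by the main term $(\log y)^{-\min\{\gamma,1\}}$. This works precisely because $R(u)$ itself decays like $1/\log u\asymp1/\log y$ uniformly on the whole range $[\sqrt y,y/3]$, so no cancellation beyond what Lemma~\ref{uint} already provides is needed.
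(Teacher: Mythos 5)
Your proof is correct and takes essentially the same approach as the paper: both decompose via $M(u)=\frac{\log\log u}{q^\alpha}+c_{q^\alpha}-c_{q^{\alpha+1}}+R(u)$, invoke Lemma~\ref{uint} for the main term, and integrate by parts against $R$ for the remainder (the paper substitutes $v=y/u$ where you substitute $L=\log(y/u)$, but the resulting elementary integral is identical).
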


\begin{proof}
With $M(x)$ and $R(x)$ defined as in equations~\eqref{Mx def} and~\eqref{Rx def},
\begin{align}
\sum_{\substack{\sqrt y<p\le y/3\\ \nu_q(p-1)=\alpha}} \frac1{p(\log(y/p))^\gamma} &= \int_{\sqrt y}^{y/3} \frac1{(\log(y/u))^\gamma} \,dM(u) \notag \\
&=\int_{\sqrt y}^{y/3} \frac1{(\log(y/u))^\gamma} \,d\bigg( \frac{\log\log u}{q^\alpha} + c_{q^\alpha} - c_{q^{\alpha+1}} + R(u) \bigg) \notag \\
&=\frac 1{q^\alpha} \int_{\sqrt y}^{y/3} \frac1{(\log(y/u))^\gamma} \frac {du}{u\log u} + \int_{\sqrt y}^{y/3} \frac1{(\log(y/u))^\gamma} \,dR(u) \notag \\
&\ll_\gamma \frac 1{(\log y)^{\min\{\gamma,1\}}} + \int_{\sqrt y}^{y/3} \frac1{(\log(y/u))^\gamma} \,dR(u) \label{almost a copy}
\end{align}
by Lemma~\ref{uint}.
On the other hand, integrating by parts yields
\begin{align*}
\int_{\sqrt y}^{y/3} \frac1{(\log(y/u))^\gamma} \,dR(u) &= \frac{R(u)}{(\log(y/u))^\gamma} \bigg|_{\sqrt y}^{y/3} - \int_{\sqrt y}^{y/3} R(u) \frac d{du} \bigg( \frac1{(\log(y/u))^\gamma} \bigg) \,du \\
&= \frac{R(y/3)}{(\log3)^\gamma} - \frac{R(\sqrt y)}{(\log\sqrt y)^\gamma} - \int_{\sqrt y}^{y/3} \frac{\gamma R(u)}{u(\log(y/u))^{\gamma+1}} \,du \\
&\ll_\gamma \frac1{\log y} + \frac{1/\log y}{(\log y)^\gamma} + \int_{\sqrt y}^{y/3} \frac{1/\log u}{u(\log(y/u))^{\gamma+1}} \,du \\
&\ll_\gamma \frac1{\log y} + \frac1{\log y} \int_{\sqrt y}^{y/3} \frac1{(\log(y/u))^{\gamma+1}} \,\frac{du}u.
\end{align*}
Using the change of variables $v=y/u$ yields
\begin{align*}
\int_{\sqrt y}^{y/3} \frac1{(\log(y/u))^\gamma} \,dR(u) &\ll_\gamma \frac1{\log y} + \frac1{\log y} \int_3^{\sqrt y} \frac1{(\log v)^{\gamma+1}} \,\frac{dv}v \\
&= \frac1{\log y} + \frac1{\log y} \frac{-\gamma}{(\log v)^\gamma} \bigg|_3^{\sqrt y} \\
&\ll_\gamma \frac1{\log y} + \frac1{\log y} \bigg( 1 - \frac1{(\log y)^\gamma} \bigg) \ll \frac1{\log y}
\end{align*}\
which, combined with equation~\eqref{almost a copy}, establishes the lemma.
\end{proof}

\begin{lemma} \label{sumlem}
For $y\ge 9$,
\[
\sum_{\sqrt y<p\le y/3} \frac1{p\log(y/p)} \ll \frac{\log\log y}{\log y}.
\]
\end{lemma}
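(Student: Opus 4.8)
The plan is to adapt the proof of Lemma~\ref{usum} to the case $\gamma=1$, which that lemma excludes: the hypergeometric substitution of Lemma~\ref{indefint} is unavailable since $1\in\N$, but the relevant antiderivative can instead be found by an elementary partial-fraction calculation. Write $\widetilde M(u)=\sum_{p\le u}1/p$; Mertens' theorem gives a constant $M_0$ with $\widetilde M(u)=\log\log u+M_0+R(u)$ where $R(u)\ll 1/\log u$ for $u\ge2$. Since $y\ge9$ forces $\sqrt y\ge3$, we may write the sum as a Riemann--Stieltjes integral and split off the error term:
\[
\sum_{\sqrt y<p\le y/3}\frac1{p\log(y/p)} = \int_{\sqrt y}^{y/3}\frac{d\widetilde M(u)}{\log(y/u)} = \int_{\sqrt y}^{y/3}\frac{du}{u\log u\,\log(y/u)} + \int_{\sqrt y}^{y/3}\frac{dR(u)}{\log(y/u)}.
\]

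For the main integral I would substitute $t=\log u$ and set $L=\log y$, turning it into $\int_{L/2}^{L-\log3}\frac{dt}{t(L-t)}$. The decomposition $\frac1{t(L-t)}=\frac1L\bigl(\frac1t+\frac1{L-t}\bigr)$ integrates to $\frac1L\log\frac{t}{L-t}$, which between the limits equals $\frac1L\log\frac{L-\log3}{\log3}\ll\frac{\log L}{L}=\frac{\log\log y}{\log y}$. This is precisely the order of magnitude asserted, and it is the source of the $\log\log y$ in the statement (as opposed to the bare $1/\log y$ appearing in Lemma~\ref{usum}).

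For the error integral I would integrate by parts as in Lemma~\ref{usum}: using $\frac{d}{du}(\log(y/u))^{-1}=(u\log^2(y/u))^{-1}$ we get boundary contributions $\frac{R(y/3)}{\log3}-\frac{R(\sqrt y)}{\log\sqrt y}\ll\frac1{\log y}$ plus $-\int_{\sqrt y}^{y/3}\frac{R(u)}{u\log^2(y/u)}\,du$. On the range of integration $\log u>\tfrac12\log y$, so $R(u)\ll1/\log y$ throughout; pulling this factor out and changing variables to $v=y/u$ leaves $\int_3^{\sqrt y}\frac{dv}{v\log^2v}=\frac1{\log3}-\frac2{\log y}\ll1$, so the whole error integral is $\ll1/\log y$. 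Adding the two pieces proves the lemma. I do not anticipate any real obstacle; the only points requiring attention are that Mertens' estimate is used only for $u\ge\sqrt y\ge3$, so the bound $R(u)\ll1/\log u$ is valid on the entire range, and that one should not discard the genuine $\log\log y$ coming from the main integral.
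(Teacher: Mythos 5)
Your argument is correct, and it takes a genuinely different route from the paper. The paper disposes of this lemma by a dyadic decomposition: it bounds $\sum_{y/2U\le p< y/U} 1/(p\log(y/p)) \ll 1/(\log y\cdot\log U)$ via Chebyshev's estimate and then sums over $U=2,4,8,\dots$ to pick up the $\log\log y$ factor as a harmonic sum $\sum_k 1/(k\log 2)$. You instead carry out the Riemann--Stieltjes-plus-Mertens machinery of Lemma~\ref{usum} directly at $\gamma=1$, replacing the hypergeometric antiderivative $H_\gamma$ (which degenerates at integer $\gamma$) with the elementary partial fraction $\frac1{t(L-t)}=\frac1L(\frac1t+\frac1{L-t})$, and the $\log\log y$ emerges from the resulting $\frac1L\log\frac{L-\log3}{\log3}$. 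Your integration-by-parts bound on the $dR(u)$ term mirrors Lemma~\ref{usum} exactly and is valid since $R(u)\ll 1/\log u\ll 1/\log y$ on the range $u\ge\sqrt y$. Each approach has its merits: the paper's dyadic argument is shorter and more self-contained, requiring nothing beyond Chebyshev; yours is structurally uniform with Lemmas~\ref{lsum} and~\ref{usum}, making explicit that the sole obstacle at $\gamma=1$ is the form of the antiderivative, which is conceptually illuminating even if it involves a bit more bookkeeping.
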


\begin{proof}
We first consider, for $2\le U\le y^{3/4}$,
\[
\sum_{y/2U \le p < y/U} \frac1{p\log(y/p)} \ll \frac{\pi(y/U)}{(y/U) \log U} \ll \frac1{\log(y/U)\log U} \ll \frac1{\log y\cdot \log U}.
\]
Applying this estimate with $U=2,4,8,\dots$ until $U$ amply exceeds $\sqrt y$, we deduce that
\[
\sum_{y/2U \le p < y/U} \frac1{p\log(y/p)} \le \sum_{k=1}^{\log y} \sum_{y/2^{k+1} \le p < y/2^k} \frac1{p\log(y/p)} \ll \sum_{k=1}^{\log y} \frac1{\log y\cdot \log 2^k} \ll \frac{\log\log y}{\log y}. \qedhere
\]
\end{proof}

\begin{lemma} \label{sum}
Let $\gamma>0$ such that $\gamma\not\in\N$, let~$q$ be prime, and let $\alpha\in\N$. For $y\ge 3$,
\[
\sum_{\substack{p\le y/3 \\ \nu_q(p-1)=\alpha}} \frac1{p(\log(y/p))^\gamma} = \frac{\log\log y}{q^\alpha(\log y)^\gamma}+O_\gamma\bigg( \frac 1{(\log y)^{\min\{\gamma,1\}}} \bigg).
\]
\end{lemma}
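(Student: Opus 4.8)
The plan is to split the sum at $\sqrt y$ and quote the two lemmas just proved. Assume first that $y\ge 9$, so that both Lemma~\ref{lsum} and Lemma~\ref{usum} apply, and write
\[
\sum_{\substack{p\le y/3 \\ \nu_q(p-1)=\alpha}} \frac1{p(\log(y/p))^\gamma} = \sum_{\substack{p\le \sqrt y \\ \nu_q(p-1)=\alpha}} \frac1{p(\log(y/p))^\gamma} + \sum_{\substack{\sqrt y<p\le y/3 \\ \nu_q(p-1)=\alpha}} \frac1{p(\log(y/p))^\gamma}.
\]
By Lemma~\ref{lsum} the first sum equals $\dfrac{\log\log y}{q^\alpha(\log y)^\gamma} + O_\gamma\big((\log y)^{-\gamma}\big)$, and by Lemma~\ref{usum} the second sum is $O_\gamma\big((\log y)^{-\min\{\gamma,1\}}\big)$. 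Since $\gamma\ge\min\{\gamma,1\}>0$, we have $(\log y)^{-\gamma}\ll(\log y)^{-\min\{\gamma,1\}}$ for $y\ge 9$, so the two error terms combine into a single $O_\gamma\big((\log y)^{-\min\{\gamma,1\}}\big)$, which yields the assertion of the lemma in this range.

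It remains to treat the bounded range $3\le y<9$, where Lemmas~\ref{lsum} and~\ref{usum} are not directly available. Here $\log y$ lies in the fixed compact interval $[\log 3,\log 9]$, so both $(\log y)^{-\min\{\gamma,1\}}$ and $\dfrac{\log\log y}{q^\alpha(\log y)^\gamma}$ are $O_\gamma(1)$ (bounded above and below by positive constants depending only on $\gamma$). On the left-hand side, the summation runs over primes $p\le y/3<3$, hence contributes at most the single term $p=2$, which is at most $\dfrac1{2(\log(3/2))^\gamma}=O_\gamma(1)$. Thus every term in the claimed identity is $O_\gamma(1)$ throughout $3\le y<9$, and the identity holds there after enlarging the implied constant if necessary.

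The substantive work of this lemma has already been carried out in Lemmas~\ref{lsum} and~\ref{usum}; the only points that require attention are the bookkeeping of the two error terms — in particular the observation that the weaker exponent $\min\{\gamma,1\}$ absorbs the exponent $\gamma$ coming from the short range — and the routine check that the identity is trivially valid on the bounded interval $3\le y<9$ where the earlier lemmas do not apply. I do not anticipate any genuine obstacle.
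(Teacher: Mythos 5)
Your proof is correct and follows exactly the paper's approach: split the sum at $\sqrt y$, invoke Lemmas~\ref{lsum} and~\ref{usum}, and absorb the $(\log y)^{-\gamma}$ error into the weaker $(\log y)^{-\min\{\gamma,1\}}$ error. The only cosmetic difference is in the range $3\le y<9$: the paper simply notes that the sum is empty there (the only candidate prime is $p=2$, and $\nu_q(2-1)=0\neq\alpha$ since $\alpha\ge1$), whereas you bound the putative $p=2$ term by $O_\gamma(1)$ — also valid, just slightly less sharp.
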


\begin{proof}
Since the sum in question is empty when $3\le y<9$, we may assume that $y\ge9$.
By Lemmas~\ref{lsum} and~\ref{usum},
\begin{align*}
\sum_{\substack{p\le y/3\\ \nu_q(p-1)=\alpha}} \frac1{p(\log(y/p))^\gamma} &= \sum_{\substack{p\le \sqrt y\\ \nu_q(p-1)=\alpha}} \frac1{p(\log(y/p))^\gamma} + \sum_{\substack{\sqrt y<p\le y/3\\ \nu_q(p-1)=\alpha}} \frac1{p(\log(y/p))^\gamma} \\
&= \frac{\log\log y}{q^\alpha(\log y)^\gamma} + O_\gamma\bigg( \frac 1{(\log y)^\gamma}\bigg) + O_\gamma\bigg( \frac1{(\log y)^{\min\{\gamma,1\}}} \bigg)
\end{align*}
as desired.
\end{proof}

\begin{corollary} \label{sumcor}
Let $0<\gamma\le1$, let~$q$ be prime, and let $\alpha\in\N$. Then, uniformly for $y\ge 9$,
\[
\sum_{\substack{p\le y/3 \\ \nu_q(p-1)=\alpha}} \frac1{p(\log(y/p))^\gamma} \ll_\gamma \frac{\log\log y}{(\log y)^\gamma}.
\]
\end{corollary}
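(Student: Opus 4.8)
The plan is to deduce this from Lemma~\ref{sum} when $0<\gamma<1$ and to treat the endpoint $\gamma=1$ separately, since Lemma~\ref{sum} (and likewise Lemmas~\ref{lsum} and~\ref{usum}) explicitly requires $\gamma\not\in\N$.

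\textbf{Case $0<\gamma<1$.} Here $\min\{\gamma,1\}=\gamma$, so Lemma~\ref{sum} gives
\[
\sum_{\substack{p\le y/3 \\ \nu_q(p-1)=\alpha}} \frac1{p(\log(y/p))^\gamma} = \frac{\log\log y}{q^\alpha(\log y)^\gamma}+O_\gamma\bigg( \frac 1{(\log y)^{\gamma}} \bigg).
\]
Since $1/q^\alpha\le1$ and $\log\log y\ge\log\log9>0$ for $y\ge9$, each term on the right-hand side is $\ll_\gamma\log\log y/(\log y)^\gamma$, which is the asserted bound.

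\textbf{Case $\gamma=1$.} Now I would split the sum at $\sqrt y$ and bound the two ranges separately, in each case enlarging the set of primes summed over so as to discard the condition $\nu_q(p-1)=\alpha$. For $p\le\sqrt y$ one has $\log(y/p)\ge\log\sqrt y=\tfrac12\log y$, so that
\[
\sum_{\substack{p\le\sqrt y \\ \nu_q(p-1)=\alpha}} \frac1{p\log(y/p)} \le \frac2{\log y}\sum_{p\le\sqrt y}\frac1p \ll \frac{\log\log y}{\log y}
\]
by Mertens's estimate $\sum_{p\le z}1/p=\log\log z+O(1)$. For the remaining primes $\sqrt y<p\le y/3$ I would drop the $\nu_q$-condition entirely and quote Lemma~\ref{sumlem}, which already shows $\sum_{\sqrt y<p\le y/3}1/(p\log(y/p))\ll\log\log y/\log y$. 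Adding the two contributions establishes the corollary when $\gamma=1$.

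There is essentially no obstacle: the only point requiring care is recognizing that the value $\gamma=1$ falls outside the scope of Lemma~\ref{sum} and must be handled on its own, and that Lemma~\ref{sumlem} was proved precisely to supply the estimate needed in that case. All implied constants depend only on $\gamma$, the $q$- and $\alpha$-dependence disappearing once the $\nu_q$-condition is relaxed.
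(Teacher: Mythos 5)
Your proof is correct and follows the same overall strategy as the paper's: handle $0<\gamma<1$ by quoting Lemma~\ref{sum} directly, and handle $\gamma=1$ by splitting the sum at $\sqrt y$, bounding the lower range by a Mertens estimate and the upper range by Lemma~\ref{sumlem}. The one place you diverge is in the lower range for $\gamma=1$: the paper cites Lemma~\ref{lsum} there, even though as stated Lemma~\ref{lsum} carries the hypothesis $\gamma\notin\N$, whereas you replace that citation with the elementary bound $\log(y/p)\ge\tfrac12\log y$ for $p\le\sqrt y$ together with Mertens. That is a sensible substitution; in fact the proof of Lemma~\ref{lsum} never uses $\gamma\notin\N$ (it relies only on Lemma~\ref{lint}, which needs merely $\gamma>0$), so the paper's citation does go through, but your version avoids invoking a lemma outside its literal hypotheses. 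Both routes give the stated uniform bound with implied constant depending only on $\gamma$.
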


\begin{proof}
For $0<\gamma<1$ this estimate follows immediately from Lemma~\ref{sum}, while for $\gamma=1$ it follows from Lemmas~\ref{lsum} and~\ref{sumlem}.
\end{proof}

We need only two more lemmas of this flavour before we begin to evaluate the inner sums in equation~\eqref{D_0} in the next section.

\begin{lemma} \label{recsum lemma}
Let $k\ge0$ and $\gamma>0$ be real numbers such that $\gamma\not\in\N$, let~$q$ be prime, and let $\alpha\in\N$. For $y\ge 9$,
\[
\sum_{\substack{p\le y/3 \\ \nu_q(p-1)=\alpha}}\frac{(\log\log(y/p))^k}{p(\log(y/p))^\gamma} = \frac{(\log\log y)^{k+1}}{q^\alpha\log^\gamma{y}}+O_{k,\gamma}\bigg( \frac{(\log\log y)^k}{(\log y)^{\min\{\gamma,1\}}}\bigg).
\]
\end{lemma}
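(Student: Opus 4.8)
The plan is to mimic the proofs of Lemmas~\ref{lsum} and~\ref{usum}, but with the extra factor $(\log\log(y/p))^k$ carried along, splitting the range of summation at $\sqrt y$. On the range $p\le\sqrt y$ we have $\log\log(y/p) = \log\log y + O(1)$ uniformly (since $\log y \le \log(y/p) \le \log y$ up to a factor of $2$ in the logarithm, so the double logarithms differ by $O(1)$); expanding $(\log\log(y/p))^k = (\log\log y)^k + O_k\big((\log\log y)^{k-1}\big)$ by the binomial theorem and pulling the main term out of the sum, Lemma~\ref{lsum} gives
\[
\sum_{\substack{p\le\sqrt y\\ \nu_q(p-1)=\alpha}} \frac{(\log\log(y/p))^k}{p(\log(y/p))^\gamma} = (\log\log y)^k\bigg( \frac{\log\log y}{q^\alpha(\log y)^\gamma} + O_\gamma\Big( \frac1{(\log y)^\gamma} \Big) \bigg) + O_{k,\gamma}\bigg( \frac{(\log\log y)^{k-1}\log\log y}{(\log y)^\gamma} \bigg),
\]
where the error from the binomial expansion is controlled by applying Lemma~\ref{lsum} once more to the sum without the $(\log\log(y/p))^k$ weight. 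This already produces the claimed main term $(\log\log y)^{k+1}/(q^\alpha\log^\gamma y)$ together with an acceptable error $O_{k,\gamma}\big((\log\log y)^k/(\log y)^\gamma\big)$.

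For the range $\sqrt y < p \le y/3$, the trick of replacing $\log\log(y/p)$ by $\log\log y$ is no longer available, but here we only need an upper bound. I would bound $(\log\log(y/p))^k \le (\log\log y)^k$ trivially (valid since $y/p \le y$ and $k\ge0$, so the double logarithm is at most $\log\log y$ once $y$ is large), pull that factor out of the sum, and invoke Lemma~\ref{usum} to conclude that this tail contributes $\ll_{k,\gamma} (\log\log y)^k/(\log y)^{\min\{\gamma,1\}}$, which is absorbed into the error term. Adding the two ranges gives the lemma.

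The main obstacle—such as it is—is the bookkeeping around the double-logarithm comparison on the short range: one must check that $\log\log(y/p)$ really does equal $\log\log y + O(1)$ uniformly for $2\le p\le\sqrt y$ (it does, because $\tfrac12\log y \le \log(y/p) \le \log y$, hence $\log\log(y/p) = \log\log y + \log\big(1 + O(1/\log y)\big) = \log\log y + O(1)$), and that the binomial expansion error terms are each dominated by a single application of Lemma~\ref{lsum}. None of this is deep; the only genuine input is the already-established Lemmas~\ref{lsum} and~\ref{usum}, and everything else is elementary estimation. I would also note at the outset that the sum is empty for $y<9$, as in Lemma~\ref{sum}, so the hypothesis $y\ge9$ is exactly what is needed for the double-logarithm manipulations to make sense.
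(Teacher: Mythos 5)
Your argument is correct and essentially identical to the paper's proof: both split the sum at $\sqrt y$, use $\log\log(y/p)=\log\log y+O(1)$ together with Lemma~\ref{lsum} on the range $p\le\sqrt y$, and bound the tail $\sqrt y<p\le y/3$ by pulling out $(\log\log y)^k$ and invoking Lemma~\ref{usum}. (One cosmetic slip in your parenthetical justification: for $p\le\sqrt y$ one has $\log(y/p)/\log y=1+O(1)$, not $1+O(1/\log y)$, but the conclusion $\log\log(y/p)=\log\log y+O(1)$ that you actually use is still valid.)
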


\begin{proof}
The upper part of the range of summation can be addressed by noting that
\[
\sum_{\substack{\sqrt y<p\le y/3\\ \nu_q(p-1)=\alpha}}\frac{(\log\log(y/p))^k}{p(\log(y/p))^\gamma} \le (\log\log y)^k\sum_{\substack{\sqrt y<p\le y/3\\ \nu_q(p-1)=\alpha}}\frac{1}{p(\log(y/p))^\gamma} \ll_\gamma \frac{(\log\log y)^k}{(\log y)^{\min\{\gamma,1\}}}
\]
by Lemma~\ref{usum}. As for the remainder of the range of summation,
since $\log\log(y/p) = \log\log y + O(1)$ for $p\le \sqrt y$, we have
\begin{align*}
\sum_{\substack{p\le \sqrt y\\ \nu_q(p-1)=\alpha}}\frac{(\log\log(y/p))^k}{p(\log(y/p))^\gamma} &= \big( (\log\log y)^k+O_k\big( (\log\log y)^{k-1} \big) \big) \sum_{\substack{p\le \sqrt y\\ \nu_q(p-1)=\alpha}}\frac{1}{p(\log(y/p))^\gamma} \\
&= \big( (\log\log y)^k+O_k\big( (\log\log y)^{k-1} \big) \big) \bigg( \frac{\log\log y}{q^\alpha(\log y)^\gamma} + O_\gamma\bigg( \frac 1{(\log y)^\gamma} \bigg) \bigg)
\end{align*}
by Lemma~\ref{lsum}. Combining these two estimates establishes the lemma.
\end{proof}

\begin{lemma} \label{sumsq}
Let $\gamma\in\R$, let~$q$ be prime, and let $\alpha\in\N$. Then, uniformly for $y\ge3$,
\[
\sum_{\substack{p\le y/3 \\ \nu_q(p-1)=\alpha}} \frac1{p^2(\log(y/p))^\gamma} \ll_\gamma (\log y)^{-\gamma}.
\]
\end{lemma}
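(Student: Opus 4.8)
The plan is to exploit the fact that $\sum_p p^{-2}$ converges very rapidly, so that the sum is governed entirely by the small primes~$p$, for which $\log(y/p)$ is comparable to $\log y$. The one feature distinguishing this lemma from the earlier ones in the section is that $\gamma$ is now allowed to be negative; I will therefore keep the cases $\gamma\ge0$ and $\gamma<0$ separate whenever I compare $(\log(y/p))^{-\gamma}$ with a power of $\log y$, though in each case the comparison is completely elementary.

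First I would dispose of small~$y$. Since $\alpha\ge1$, every prime counted satisfies $p\equiv1\mod{q^\alpha}$ and hence $p\ge q^\alpha+1\ge3$; combined with $p\le y/3$ this forces $y\ge9$. Thus the sum is empty for $3\le y<9$, where the asserted estimate holds trivially because $(\log y)^{-\gamma}$ is bounded below by a positive constant depending only on~$\gamma$ on the interval $[3,9]$. It therefore suffices to treat $y\ge9$, for which I would split the range of summation at $\sqrt y$. For $p\le\sqrt y$ we have $\frac12\log y=\log\sqrt y\le\log(y/p)\le\log y$, so $(\log(y/p))^{-\gamma}\ll_\gamma(\log y)^{-\gamma}$ regardless of the sign of~$\gamma$; since $\sum_p p^{-2}=O(1)$, this portion of the sum is $\ll_\gamma(\log y)^{-\gamma}$. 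For $\sqrt y<p\le y/3$ we have $\log(y/p)\ge\log3>1$, so $(\log(y/p))^{-\gamma}\le(\log y)^{|\gamma|}$; together with the tail bound $\sum_{p>\sqrt y}p^{-2}\ll y^{-1/2}$ this shows the second portion is $\ll_\gamma(\log y)^{|\gamma|}y^{-1/2}$, which is $\ll_\gamma(\log y)^{-\gamma}$ because $(\log y)^{|\gamma|+\gamma}y^{-1/2}\ll_\gamma1$ (the exponent $|\gamma|+\gamma$ being a fixed nonnegative constant). Adding the two portions gives the lemma.

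As for obstacles, there is essentially nothing hard here: the estimate is routine, driven entirely by absolute convergence of $\sum_p p^{-2}$. The only points that require a moment's care are remembering that $\gamma$ is no longer assumed positive — so one cannot simply bound $(\log(y/p))^{-\gamma}$ by its value at $p=\sqrt y$ without distinguishing cases — and confirming the claimed uniformity all the way down to $y=3$, which the emptiness observation handles.
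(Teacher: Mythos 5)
Your proof is correct and follows essentially the same route as the paper's: split the sum at $\sqrt y$, use the convergence of $\sum_p p^{-2}$ on the lower range and the tail bound $\sum_{p>\sqrt y}p^{-2}\ll y^{-1/2}$ on the upper range. If anything, your explicit case distinction on the sign of $\gamma$ is more careful than the paper's write-up, which tacitly bounds $(\log(y/p))^{-\gamma}$ by $(\log\sqrt y)^{-\gamma}$ and $(\log 3)^{-\gamma}$ as if $\gamma\ge0$.
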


\begin{proof}
Indeed, the desired bound holds even if we ignore the condition $\nu_q(p-1)=\alpha$, since
\begin{align*}
\sum_{\substack{p\le y/3\\ \nu_q(p-1)=\alpha}} \frac1{p^2(\log(y/p))^\gamma} &\le \sum_{p\le\sqrt y} \frac1{p^2(\log(y/p))^\gamma} + \sum_{p\le\sqrt y} \frac1{p^2(\log(y/p))^\gamma} \\
&\le \sum_{p\le \sqrt y} \frac1{p^2(\log\sqrt y)^\gamma} + \sum_{n>\sqrt y} \frac1{n^2(\log3)^\gamma} \\
&\ll_\gamma (\log y)^{-\gamma} \sum_p \frac1{p^2} + \frac1{\sqrt y} \ll (\log y)^{-\gamma}.
\qedhere
\end{align*}
\end{proof}

\subsection{Recursive evaluation of iterated sums}

The technical lemmas in the previous section hint at the types of expressions that will appear as we work our way through the nested sums in equation~\eqref{D_0}. In this section we establish the results that allow us to recursively evaluate these expressions asymptotically.

\begin{proposition} \label{recsum}
Let $k\ge0$ and $\gamma>0$ be real numbers such that $\gamma\not\in\N$, let $\{w_1,w_2,\ldots,w_n\}$ be a set of $n$ distinct primes, let~$q$ be prime, and let $\alpha\in\N$. For $y\ge 3$,
\begin{multline}
\sum_{\substack{p\le y/3\\ p\ne w_1,w_2,\ldots, w_n \\ \nu_q(p-1)=\alpha}} \frac{p+O(1)}{p^2} \bigg(\frac{(\log\log(y/p))^k}{(\log(y/p))^\gamma}+O\bigg( \frac {(\log\log y)^{k-1}}{(\log(y/p))^{\min\{\gamma,1\}}}\bigg) \bigg)\\
=\frac{(\log\log y)^{k+1}}{q^\alpha\log^\gamma{y}}+O_{n,k,\gamma} \bigg( \frac{(\log\log y)^k}{(\log y)^{\min\{\gamma,1\}}} \bigg).
\label{recsum eq}
\end{multline}
\end{proposition}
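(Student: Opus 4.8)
The plan is to expand the summand into one main piece that matches the left side of Lemma~\ref{recsum lemma} together with several error pieces, each controlled by the estimates just established. As a preliminary reduction, note that for $3\le y<9$ we have $y/3<3$, so the left side of~\eqref{recsum eq} involves at most the single prime $p=2$, for which $\nu_q(p-1)=\nu_q(1)=0\ne\alpha$; hence for such $y$ the left side is empty and the asserted formula is immediate (the main term $(\log\log y)^{k+1}/(q^\alpha(\log y)^\gamma)$ is absorbed by the error term, since $q^\alpha\ge1$ and $\log\log y$ is bounded on this range). So I would assume $y\ge9$ henceforth, which validates Corollary~\ref{sumcor}, Lemma~\ref{recsum lemma}, and Lemma~\ref{sumsq}.

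Next I would write $\frac{p+O(1)}{p^2}=\frac1p+O(p^{-2})$ and multiply through, splitting the left side of~\eqref{recsum eq} into four sums over the same index set ($p\le y/3$, $p\ne w_1,\dots,w_n$, $\nu_q(p-1)=\alpha$):
\[
\text{(i) }\sum\frac1p\cdot\frac{(\log\log(y/p))^k}{(\log(y/p))^\gamma},\qquad \text{(ii) }\sum\frac1p\cdot O\!\left(\frac{(\log\log y)^{k-1}}{(\log(y/p))^{\min\{\gamma,1\}}}\right),
\]
and the two analogues (iii), (iv) obtained by replacing $\frac1p$ with $O(p^{-2})$. For pieces (ii)--(iv) the condition $p\ne w_i$ only shrinks a sum of nonnegative terms, so I would discard it and bound crudely. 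In (ii) the factor $(\log\log y)^{k-1}$ pulls out and Corollary~\ref{sumcor} (with exponent $\min\{\gamma,1\}\in(0,1]$) bounds the rest, giving $\ll_\gamma(\log\log y)^{k-1}\cdot(\log\log y)(\log y)^{-\min\{\gamma,1\}}$. In (iii) I would use $(\log\log(y/p))^k\le(\log\log y)^k$, apply Lemma~\ref{sumsq} with exponent $\gamma$, and note $(\log y)^{-\gamma}\le(\log y)^{-\min\{\gamma,1\}}$ for $y\ge9$; in (iv) I would additionally use $(\log\log y)^{k-1}\ll(\log\log y)^k$ (valid for $y\ge9$) before applying Lemma~\ref{sumsq} with exponent $\min\{\gamma,1\}$. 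Thus each of (ii), (iii), (iv) contributes $O_{k,\gamma}\big((\log\log y)^k/(\log y)^{\min\{\gamma,1\}}\big)$.

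Piece (i) carries the main term and is where the real work lies. Restoring the excluded primes, (i) equals $\sum_{p\le y/3,\;\nu_q(p-1)=\alpha}\frac{(\log\log(y/p))^k}{p(\log(y/p))^\gamma}$ minus the at most $n$ terms indexed by $p\in\{w_1,\dots,w_n\}$; the full sum is precisely the left side of Lemma~\ref{recsum lemma}, yielding $\frac{(\log\log y)^{k+1}}{q^\alpha\log^\gamma y}+O_{k,\gamma}\big((\log\log y)^k(\log y)^{-\min\{\gamma,1\}}\big)$. The one subtlety -- and the main obstacle -- is that an individual removed term $\frac{(\log\log(y/w_i))^k}{w_i(\log(y/w_i))^\gamma}$ might appear large, because $\log(y/w_i)$ can be as small as $\log3$; the resolution is that this forces $w_i$ to be close to $y/3$, where $1/w_i$ is correspondingly small, and I would make this precise by splitting at $\sqrt y$. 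For $w_i\le\sqrt y$ one has $\log(y/w_i)\ge\tfrac12\log y$, so the term is $\ll_\gamma(\log\log y)^k(\log y)^{-\gamma}$; for $\sqrt y<w_i\le y/3$ one has $1/w_i<y^{-1/2}$ while $\log(y/w_i)\ge\log3$, so the term is $\ll_\gamma y^{-1/2}(\log\log y)^k\ll_\gamma(\log\log y)^k(\log y)^{-\gamma}$. Hence each removed term is $O_{k,\gamma}\big((\log\log y)^k(\log y)^{-\gamma}\big)$, with the implied constant independent of the actual primes $w_i$, so deleting $n$ of them costs only $O_{n,k,\gamma}\big((\log\log y)^k(\log y)^{-\min\{\gamma,1\}}\big)$. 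Combining the evaluation of (i) with the bounds for (ii)--(iv) gives~\eqref{recsum eq} with an implied constant depending only on $n$, $k$, and $\gamma$.
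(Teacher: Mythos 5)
Your proof is correct and follows essentially the same route as the paper: the same decomposition into a main sum handled by Lemma~\ref{recsum lemma} plus error pieces controlled by Corollary~\ref{sumcor} and Lemma~\ref{sumsq}, with the finitely many excluded primes $w_i$ restored and bounded individually. Your treatment of the $w_i$ terms (splitting at $\sqrt y$) and of the range $3\le y<9$ is in fact slightly more explicit than the paper's, which simply asserts $w_i\log(y/w_i)\gg\log y$ and that the sum is empty for small $y$.
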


\begin{proof}
Since the sum in question is empty when $3\le y<9$, we may assume that $y\ge9$.
We begin by writing
\begin{align}
\sum_{\substack{p\le y/3\\ p\ne w_1,w_2,\ldots, w_n \\ \nu_q(p-1)=\alpha}} & \frac{p+O(1)}{p^2} \bigg(\frac{(\log\log(y/p))^k}{(\log(y/p))^\gamma} + O\bigg( \frac {(\log\log y)^{k-1}}{(\log(y/p))^{\min\{\gamma,1\}}}\bigg) \bigg) \notag \\
&= \bigg\{ \sum_{\substack{p\le y/3\\ \nu_q(p-1)=\alpha}}\frac{(\log\log(y/p))^k}{p(\log(y/p))^\gamma}+O\bigg( \sum_{\substack{1\le i\le n \\ w_i \le y/3}} \frac{(\log\log(y/w_i))^k}{w_i(\log(y/w_i))^\gamma} \bigg) \bigg\} \notag \\
&\qquad{}+ O\bigg(\sum_{\substack{p\le y/3 \\ \nu_q(p-1)=\alpha}}\frac{(\log\log(y/p))^k}{p^2(\log(y/p))^\gamma} + \sum_{\substack{p\le y/3 \\ \nu_q(p-1)=\alpha}}\frac {(\log\log y)^{k-1}}{p(\log(y/p))^{\min\{\gamma,1\}}}\bigg). \notag \\
&= \frac{(\log\log y)^{k+1}}{q^\alpha\log^\gamma{y}}+O_{k,\gamma}\bigg( \frac{(\log\log y)^k}{(\log y)^{\min\{\gamma,1\}}}\bigg) +O\bigg( \sum_{\substack{1\le i\le n \\ w_i \le y/3}} \frac{(\log\log y)^k}{w_i(\log(y/w_i))^\gamma} \bigg) \label{simplify Os} \\
&\qquad{}+ O\bigg(\sum_{\substack{p\le y/3 \\ \nu_q(p-1)=\alpha}}\frac{(\log\log y)^k}{p^2(\log(y/p))^\gamma} + \sum_{\substack{p\le y/3 \\ \nu_q(p-1)=\alpha}}\frac {(\log\log y)^{k-1}}{p(\log(y/p))^{\min\{\gamma,1\}}}\bigg) \notag
\end{align}
by Lemma~\ref{recsum lemma}.
Since $w_i \log(y/w_i) \gg \log y$ for $y\ge9$ and $w_i\le y/3$, the first error term sum on the right-hand side is $\ll_n (\log \log y)^k (\log y)^{-\gamma}$, while
\begin{align*}
\sum_{\substack{p\le y/3 \\ \nu_q(p-1)=\alpha}}\frac{(\log\log y)^k}{p^2(\log(y/p))^\gamma} &\ll_\gamma \frac{(\log\log y)^k}{(\log y)^\gamma} \\
\sum_{\substack{p\le y/3 \\ \nu_q(p-1)=\alpha}} \frac {(\log\log y)^{k-1}}{p(\log(y/p))^{\min\{\gamma,1\}}} &\ll_\gamma (\log\log y)^{k-1}\frac{\log\log y}{(\log y)^{\min\{\gamma,1\}}}
\end{align*}
by Lemma~\ref{sumsq} and Corollary~\ref{sumcor}. Therefore the error terms in equation~\eqref{simplify Os} are all majorized by the error term in equation~\eqref{recsum eq}.
\end{proof}

\begin{definition}\label{S_q}
Let $k$ be a nonnegative real number, and let~$q$ be prime. Define
\begin{equation} \label{Sqxk def}
S_q(x;k)=\frac{(\log\log x)^k}{(\log x)^{1/(q-1)}}+O_q\bigg( \frac{(\log\log x)^{k-1}}{(\log x)^{1/(q-1)}}\bigg).
\end{equation}
Further, for $\alpha_1,\ldots,\alpha_i\in \N$, define
\begin{multline*}
S_q(x;k;\alpha_1,\ldots,\alpha_i)\\
=\sum_{\substack{p_1\le x/3\\\nu_q(p_1-1)=\alpha_1}} \frac{p_1+O(1)}{p_1^2} \sum_{\substack{p_2\le x/3p_1\\p_2\ne p_1\\\nu_q(p_2-1)=\alpha_2}} \frac{p_2+O(1)}{p_2^2} \cdots\sum_{\substack{p_{i}\le x/3p_1\cdots p_{i-1}\\p_{i}\ne p_1,\ldots,p_{i-1}\\\nu_q(p_i-1)=\alpha_i}} \bigg\{ \frac{p_i+O(1)}{p_i^2} \\
\times \bigg( \frac{(\log\log(x/p_1\cdots p_{i}))^k}{(\log(x/p_1\cdots p_{i}))^{1/(q-1)}}+O_q\bigg( \frac {(\log\log(x/p_1\cdots p_i))^{k-1}}{(\log(x/p_1\cdots p_{i}))^{1/(q-1)}}\bigg) \bigg) \bigg\}.
\end{multline*}
\end{definition}

Note that the expressions $S_q(x;k)$ and $S_q(x;k;\alpha_1,\ldots,\alpha_i)$ are given by asymptotic, not explicit, formulas. For instance, when $i=1$, applying Proposition \ref{recsum} yields
\begin{align*}
S_q(x;k;\alpha_1)&=\sum_{\substack{p_1\le x/3\\\nu_q(p_1-1)=\alpha_1}} \frac{p_1+O(1)}{p_1^2} \bigg( \frac{(\log\log(x/p_1))^k}{(\log(x/p_1))^{1/(q-1)}}+O_q\bigg( \frac {(\log\log(x/p_1))^{k-1}}{(\log(x/p_1))^{1/(q-1)}}\bigg) \bigg)\\
&=\frac{(\log\log x)^{k+1}}{q^{\alpha_1}(\log x)^{1/(q-1)}}+O_q\bigg( \frac{(\log\log x)^k}{(\log x)^{1/(q-1)}}\bigg),
\end{align*}
which, by comparison to equation~\eqref{Sqxk def}, is the same as the expression $S_q(x;k+1)/q^{\alpha_1}$.
Here, we are not claiming that $S_q(x;k;\alpha_1)$ must be exactly equal to $S_q(x;k+1)/q^{\alpha_1}$, but rather that these two expressions have identical main terms and error terms of equal magnitude; in particular, we may freely replace $S_q(x;k;\alpha_1)$ by $S_q(x;k+1)/q^{\alpha_1}$ in any expression.

This observation generalizes to any natural number $i$, resulting in the following proposition. 

\begin{proposition} \label{recarg}
Let $k$ be a nonnegative real number, let~$q$ be prime, and let $\alpha_1,\ldots,\alpha_j\in\N$.
For any $1\le i\le j$, the expressions
\[
S_q(x;k;\alpha_1,\ldots,\alpha_i) \quad\text{and}\quad {q^{-\alpha_i}}S_q(x;k+1;\alpha_1,\ldots,\alpha_{i-1})
\]
have the same main terms and error terms of equal magnitude, so that we may freely replace $S_q(x;k;\alpha_1,\ldots,\alpha_i)$ with ${q^{-\alpha_i}} S_q(x;k+1;\alpha_1,\ldots,\alpha_{i-1})$ in any expression.
In particular, the expressions
\[
S_q(x;k;\alpha_1,\ldots,\alpha_j) \quad\text{and}\quad {q^{-\sum_{i=1}^j \alpha_i}} S_q(x;k+j)
\]
have the same main terms and error terms of equal magnitude, so that we may freely replace $S_q(x;k;\alpha_1,\ldots,\alpha_j)$ with ${q^{-\sum_{i=1}^j \alpha_i}}S_q(x;k+j)$ in any expression (as long as we note that the error term in equation~\eqref{Sqxk def} will depend on~$j$ as well as~$q$).
\end{proposition}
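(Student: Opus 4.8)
The plan is to prove Proposition~\ref{recarg} by induction on $i$, peeling off the innermost sum each time. The base case is essentially the computation already carried out in the discussion preceding the proposition: applying Proposition~\ref{recsum} with $n=0$, $\gamma=1/(q-1)$, and $\alpha=\alpha_1$ shows that $S_q(x;k;\alpha_1)$ has main term $(\log\log x)^{k+1}/(q^{\alpha_1}(\log x)^{1/(q-1)})$ and error term $O_q\big((\log\log x)^k/(\log x)^{1/(q-1)}\big)$, which is exactly $q^{-\alpha_1}S_q(x;k+1)$ by the definition in equation~\eqref{Sqxk def}.

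For the inductive step, I would fix $i$ with $2\le i\le j$ and examine the definition of $S_q(x;k;\alpha_1,\ldots,\alpha_i)$. The key observation is that the innermost sum over $p_i$ (together with the factor $(p_i+O(1))/p_i^2$ and the bracketed expression in $\log\log$ and $\log$ of $x/p_1\cdots p_i$) is precisely an instance of the left-hand side of equation~\eqref{recsum eq}, applied with $y=x/p_1\cdots p_{i-1}$, with the set of excluded primes taken to be $\{p_1,\ldots,p_{i-1}\}$ (so $n=i-1$), with $\gamma=1/(q-1)$ and $\alpha=\alpha_i$. Invoking Proposition~\ref{recsum}, that innermost sum evaluates to $\frac{(\log\log(x/p_1\cdots p_{i-1}))^{k+1}}{q^{\alpha_i}(\log(x/p_1\cdots p_{i-1}))^{1/(q-1)}}+O_{i,k,q}\Big(\frac{(\log\log(x/p_1\cdots p_{i-1}))^k}{(\log(x/p_1\cdots p_{i-1}))^{1/(q-1)}}\Big)$. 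Substituting this back, the remaining $i-1$ nested sums over $p_1,\ldots,p_{i-1}$ together with this expression are exactly $q^{-\alpha_i}S_q(x;k+1;\alpha_1,\ldots,\alpha_{i-1})$, which is the claim. The second assertion of the proposition follows by applying the first assertion $j$ times: $S_q(x;k;\alpha_1,\ldots,\alpha_j)$ becomes $q^{-\alpha_j}S_q(x;k+1;\alpha_1,\ldots,\alpha_{j-1})$, then $q^{-\alpha_j-\alpha_{j-1}}S_q(x;k+2;\alpha_1,\ldots,\alpha_{j-2})$, and so on down to $q^{-\sum_{i=1}^j\alpha_i}S_q(x;k+j)$, with each step legitimate because ``having the same main term and an error term of equal magnitude'' is preserved under multiplication by the fixed constant $q^{-\alpha_i}$ and under substitution into the outer sums (the outer sums have absolutely convergent-type weights, so they respect the $O$-bounds).

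The only subtlety worth flagging is bookkeeping about the error terms rather than any genuine analytic difficulty: Proposition~\ref{recsum} requires its $O$-input to have the shape $O\big((\log\log y)^{k-1}/(\log(y/p))^{\min\{\gamma,1\}}\big)$, and one must check that the error term carried along in the definition of $S_q(x;k;\alpha_1,\ldots,\alpha_i)$---namely $O_q\big((\log\log(x/p_1\cdots p_i))^{k-1}/(\log(x/p_1\cdots p_i))^{1/(q-1)}\big)$---matches this form, which it does since $1/(q-1)<1$ for $q\ge3$ makes $\min\{1/(q-1),1\}=1/(q-1)$ and since $\log\log(x/p_1\cdots p_i)\le\log\log(x/p_1\cdots p_{i-1})$. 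One must also track that the implied constant in the error term accumulates a dependence on $i$ (and hence, after iterating, on $j$), exactly as the statement of the proposition notes. I expect this dependence-tracking to be the most delicate part of writing the argument cleanly, but it presents no real obstacle: every intermediate bound is uniform in the excluded primes $p_1,\ldots,p_{i-1}$, so the constants depend only on $i$, $k$, and $q$, and after $j$ iterations only on $j$, $k$, and $q$.
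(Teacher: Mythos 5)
Your proposal is correct and follows essentially the same route as the paper: apply Proposition~\ref{recsum} with $y=x/p_1\cdots p_{i-1}$ (and the excluded primes $p_1,\ldots,p_{i-1}$) to the innermost sum in Definition~\ref{S_q}, identify the result as $q^{-\alpha_i}S_q(x;k+1;\alpha_1,\ldots,\alpha_{i-1})$, and obtain the final assertion by iterating. Your bookkeeping of the error-term shape and the accumulating dependence on $i$ is, if anything, more explicit than the paper's.
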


\begin{proof}
Applying Proposition~\ref{recsum}, with $y=x/p_1\dots p_{i-1}$, to the innermost sum in Definition~\ref{S_q}, we see that
\begin{align*}
S_q(x;k;&\alpha_1,\ldots,\alpha_i)\\
&=\sum_{\substack{p_1\le x/3\\\nu_q(p_1-1)=\alpha_1}} \frac{p_1+O(1)}{p_1^2} \cdots\sum_{\substack{p_{i-1}\le x/3p_1\cdots p_{i-2}\\p_{i-1}\ne p_1,\ldots,p_{i-2}\\\nu_q(p_{i-1}-1)=\alpha_{i-1}}} \bigg\{ \frac{p_1+O(1)}{p_1^2} \\
&\qquad{}\times \bigg( \frac{(\log\log(x/p_1\cdots p_{i-1}))^{k+1}}{q^{\alpha_i}(\log(x/p_1\cdots p_{i-1}))^{1/(q-1)}}+O_q\bigg( \frac {(\log\log(x/p_1\cdots p_{i-1}))^{k}}{(\log(x/p_1\cdots p_{i-1}))^{1/(q-1)}}\bigg) \bigg) \bigg\}.
\end{align*}
The last assertion follows from a trivial induction.
\end{proof}

\begin{definition}\label{ep_q}
Let $\gamma$ be a positive real number, and let~$q$ be prime. Define $\ep_q(x,\gamma) = (\log x)^{-\gamma}$.
Further, for any $\alpha_1,\ldots,\alpha_i\in\N$, define
\[
\ep_q(x,\gamma;\alpha_1,\ldots,\alpha_i)=\sum_{\substack{p_1\le x/3\\\nu_q(p_1-1)=\alpha_1}}\frac 1{p_1}\sum_{\substack{p_2\le x/3p_1\\p_2\ne p_1\\\nu_q(p_2-1)=\alpha_2}}\frac 1{p_2}\cdots \sum_{\substack{p_i\le x/3p_1\cdots p_{i-1}\\p_i\ne p_1,\ldots,p_{i-1}\\\nu_q(p_i-1)=\alpha_i}}\frac 1{p_i}\bigg( \log\frac x{p_1 \cdots p_i}\bigg)^{-\gamma}.
\]
\end{definition}

\begin{proposition} \label{recarg2}
Let $\gamma$ be a positive real number, let~$q$ be prime, and let $\alpha_1,\ldots,\alpha_j\in\N$. For any $1\le i\le j$ and for any $x\ge3$,
\[
\ep_q(x,\gamma;\alpha_1,\ldots,\alpha_i)\ll_q \ep_q(x,\gamma;\alpha_1,\ldots,\alpha_{i-1}) \log\log x.
\]
In particular, $\ep_q(x,\gamma;\alpha_1,\ldots,\alpha_j) \ll_{q,j} (\log\log x)^j/(\log x)^\gamma$.
\end{proposition}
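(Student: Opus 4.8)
The plan is to establish, uniformly for $1\le i\le j$, the recursive estimate
\[
\ep_q(x,\gamma;\alpha_1,\ldots,\alpha_i)\ll_q \ep_q(x,\gamma;\alpha_1,\ldots,\alpha_{i-1})\log\log x,
\]
with the right-hand side read as $\ep_q(x,\gamma)=(\log x)^{-\gamma}$ when $i=1$, and then to obtain the ``in particular'' assertion by chaining this estimate $j$ times. Essentially all of the work is one application of Corollary~\ref{sumcor} to the innermost summation variable.

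Concretely, I would fix distinct primes $p_1,\ldots,p_{i-1}$ with $\nu_q(p_\ell-1)=\alpha_\ell$, set $y=x/p_1\cdots p_{i-1}$, and drop the harmless condition $p_i\ne p_1,\ldots,p_{i-1}$ (which only enlarges the sum) to get, by Corollary~\ref{sumcor},
\[
\sum_{\substack{p_i\le x/3p_1\cdots p_{i-1}\\p_i\ne p_1,\ldots,p_{i-1}\\\nu_q(p_i-1)=\alpha_i}}\frac1{p_i}\Bigl(\log\frac x{p_1\cdots p_i}\Bigr)^{-\gamma}\le\sum_{\substack{p_i\le y/3\\\nu_q(p_i-1)=\alpha_i}}\frac1{p_i(\log(y/p_i))^\gamma}\ll_\gamma\frac{\log\log y}{(\log y)^\gamma};
\]
here the sum is empty unless $y\ge9$, so Corollary~\ref{sumcor} does apply, and $0<\gamma\le1$ throughout the range of interest. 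Since $y\le x$ and $\log\log$ is increasing, I would bound $\log\log y\le\log\log x$, so that the innermost sum is $\ll_\gamma\log\log x\cdot(\log(x/p_1\cdots p_{i-1}))^{-\gamma}$, uniformly in the outer primes. Inserting this into the definition of $\ep_q(x,\gamma;\alpha_1,\ldots,\alpha_i)$ and pulling $\log\log x$ out front leaves precisely $\log\log x$ times the defining sum for $\ep_q(x,\gamma;\alpha_1,\ldots,\alpha_{i-1})$ (or for $\ep_q(x,\gamma)$ when $i=1$), which is the desired recursion. Applying it for $i=j,j-1,\ldots,1$ then gives $\ep_q(x,\gamma;\alpha_1,\ldots,\alpha_j)\ll_{q,j}(\log\log x)^j(\log x)^{-\gamma}$, the implied constant now also depending on~$j$.

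I do not expect a genuine obstacle: once Corollary~\ref{sumcor} is in hand the argument is entirely bookkeeping. The one step requiring care is that Corollary~\ref{sumcor} outputs a factor $\log\log y$ in which $y$ still depends on $p_1,\ldots,p_{i-1}$, and this must be uniformized to $\log\log x$ before the remaining iterated sum can be identified with $\ep_q(x,\gamma;\alpha_1,\ldots,\alpha_{i-1})$; this is permissible only because $y\le x$. Beyond that, one needs merely to dispatch the degenerate ranges (for bounded $x$ every sum involved is empty, so the estimate is trivial) and to check that the $i=1$ case matches Definition~\ref{ep_q}'s base value.
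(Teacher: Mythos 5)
Your argument is the paper's own: the paper likewise applies its single-prime estimate (Lemma~\ref{sum}, with $y=x/p_1\cdots p_{i-1}$) to the innermost sum, replaces $\log\log y$ by $\log\log x$ using $y\le x$, recognizes what remains as $\ep_q(x,\gamma;\alpha_1,\ldots,\alpha_{i-1})$, and iterates. Your choice of Corollary~\ref{sumcor} rather than Lemma~\ref{sum} is if anything slightly cleaner, since it covers $\gamma=1$ (which is exactly the case needed when the proposition is invoked in Lemma~\ref{secondsum}), whereas Lemma~\ref{sum} formally excludes integer~$\gamma$.

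One caveat is worth recording. You restrict to $0<\gamma\le1$, while the proposition is stated for all $\gamma>0$ and is later invoked with $\gamma=1+1/(q-1)>1$ in Proposition~\ref{firstsum}. Neither your proof nor the paper's covers $\gamma>1$; indeed the stated bound fails there, since already for $i=1$ the primes $p_1\in(x/9,x/3]$ with $\nu_q(p_1-1)=\alpha_1$ contribute $\gg 1/\log x$ to $\ep_q(x,\gamma;\alpha_1)$, which exceeds $(\log\log x)/(\log x)^\gamma$ once $\gamma>1$. (The bound that does hold for $\gamma>1$, namely $\ll(\log\log x)^{j-1}/\log x$, is still small enough for the error term in Proposition~\ref{firstsum}, so nothing downstream is affected.) Your explicit acknowledgement of the restriction is therefore not a defect of your write-up relative to the paper; it is the honest statement of what the method proves.
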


\begin{proof}
If $x/p_1\dots p_{i-1} < 3$ then the innermost sum in the definition of $\ep_q(x,\gamma;\alpha_1,\ldots,\alpha_i)$ is empty; otherwise, applying Lemma~\ref{sum} (with $y=x/p_1\dots p_{i-1}$) to the innermost sum, we obtain
\begin{align*}
\ep_q(x,\gamma; &\, \alpha_1,\ldots,\alpha_i)\\
&\ll \sum_{\substack{p_1\le x/3\\\nu_q(p_1-1)=\alpha_1}}\frac 1{p_1}\sum_{\substack{p_2\le x/3p_1\\p_2\ne p_1\\\nu_q(p_2-1)=\alpha_2}}\frac 1{p_2}\cdots \sum_{\substack{p_{i-1}\le x/3p_1\cdots p_{i-2}\\p_{i-1}\ne p_1,\ldots,p_{i-2}\\\nu_q(p_{i-1}-1)=\alpha_{i-1}}}\frac 1{p_{i-1}}\bigg( \frac{\log\log(x/p_1\cdots p_{i-1})}{q^{\alpha_i}(\log(x/p_1\cdots p_{i-1}))^\gamma}\bigg) \\
&\ll \sum_{\substack{p_1\le x/3\\\nu_q(p_1-1)=\alpha_1}}\frac 1{p_1}\sum_{\substack{p_2\le x/3p_1\\p_2\ne p_1\\\nu_q(p_2-1)=\alpha_2}}\frac 1{p_2}\cdots \sum_{\substack{p_{i-1}\le x/3p_1\cdots p_{i-2}\\p_{i-1}\ne p_1,\ldots,p_{i-2}\\\nu_q(p_{i-1}-1)=\alpha_{i-1}}}\frac 1{p_{i-1}}\bigg( \frac{\log\log x}{(\log(x/p_1\cdots p_{i-1}))^\gamma}\bigg) \\
&= \ep_q(x,\gamma;\alpha_1,\ldots,\alpha_{i-1}) \log\log x.
\end{align*}
(We check that this calculation is valid even in the case $i=1$, where the above notation is obfuscatory.)
The last assertion follows from the definition of $\ep_q(x,\gamma)$ and a trivial induction.
\end{proof}

\subsection{Evaluation of $D_0(H,x)$}

We now have all the tools we need to evaluate the counting function $D_0(H,x)$ from Definition~\ref{DkHx def}, which is the majority of the work needed to establish Theorem~\ref{main}.
Since we can apply Proposition~\ref{sdsum2} only when $x/p_1\cdots p_j \ge 3$, we start by splitting the sum in equation~\eqref{D_0}, so that Lemma~\ref{initial D0 prop} becomes
\begin{multline} \label{D_0later}
D_0(\Z_{q^\balpha},x)= C(\balpha) \bigg( \sum_{\substack{p_1\le x/3\\ \nu_q(p_1-1)=\alpha_1}}\sum_{\substack{p_2\le x/3p_1\\p_2\ne p_1\\\nu_q(p_2-1)=\alpha_2}}\cdots \sum_{\substack{p_j\le x/3p_1\cdots p_{j-1}\\p_j\ne p_1,\ldots,p_{j-1}\\\nu_q(p_j-1)=\alpha_j}}\sum_{\substack{m\le x/p_1\cdots p_j\\q\dnd m\\ (t\mid m\text{ and }t\equiv1\mod q \Rightarrow t\in\{p_1,\dots,p_j\}}}1 +{} \\
\sum_{\substack{p_1\le x/3 \\ \nu_q(p_1-1)=\alpha_1}}\sum_{\substack{p_2\le x/3p_1\\p_2\ne p_1\\\nu_q(p_2-1)=\alpha_2}}\cdots \sum_{\substack{p_{j-1}\le x/3p_1\cdots p_{j-2} \\ p_{j-1}\ne p_1,\ldots,p_{j-2} \\ \nu_q(p_{j-1}-1)=\alpha_{j-1}}} \sum_{\substack{x/3p_1\cdots p_{j-1}<p_j\le x/p_1\cdots p_{j-1}\\p_j\ne p_1,\ldots,p_{j-1}\\\nu_q(p_j-1)=\alpha_j}} \hskip-7mm \sum_{\substack{m\le x/p_1\cdots p_j\\q\dnd m\\ (t\mid m\text{ and }t\equiv1\mod q \Rightarrow t\in\{p_1,\dots,p_j\}}} \hskip-2mm 1 \bigg).
\end{multline}
In the next two propositions we estimate the second sum in equation~\eqref{D_0later} and then asymptotically evaluate the first sum.

\begin{lemma}\label{secondsum}
Let $q$ be an odd prime, and let $\balpha=(\alpha_1,\dots,\alpha_j)$ be a partition. For $x\ge3$,
\begin{multline*}
\sum_{\substack{p_1\le x/3 \\ \nu_q(p_1-1)=\alpha_1}}\sum_{\substack{p_2\le x/3p_1\\p_2\ne p_1\\\nu_q(p_2-1)=\alpha_2}}\cdots \sum_{\substack{p_{j-1}\le x/3p_1\cdots p_{j-2} \\ p_{j-1}\ne p_1,\ldots,p_{j-2} \\ \nu_q(p_{j-1}-1)=\alpha_{j-1}}} \sum_{\substack{x/3p_1\cdots p_{j-1}<p_j\le x/p_1\cdots p_{j-1}\\p_j\ne p_1,\ldots,p_{j-1}\\\nu_q(p_j-1)=\alpha_j}} \hskip-7mm \sum_{\substack{m\le x/p_1\cdots p_j\\q\dnd m\\ (t\mid m\text{ and }t\equiv1\mod q \Rightarrow t\in\{p_1,\dots,p_j\}}} \hskip-2mm 1 \\
\ll_{q,j} \frac{x(\log\log x)^{j-1}}{\log x}.
\end{multline*}
\end{lemma}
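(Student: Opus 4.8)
The plan is to bound the innermost sum over~$m$ trivially and then handle the resulting sum over~$p_1,\dots,p_j$ by the technical lemmas already developed. First I would note that the innermost sum over~$m$ counts at most all integers up to $x/p_1\cdots p_j$, so it is $\le x/(p_1\cdots p_j)$; in particular for the last variable we get
\[
\sum_{\substack{m\le x/p_1\cdots p_j \\ q\dnd m \\ (t\mid m\text{ and }t\equiv1\mod q)\Rightarrow t\in\{p_1,\dots,p_j\}}} 1 \le \frac{x}{p_1\cdots p_{j-1}} \cdot \frac1{p_j}.
\]
Since $p_j$ ranges over $x/3p_1\cdots p_{j-1} < p_j \le x/p_1\cdots p_{j-1}$, there is essentially one dyadic block of such primes, and the number of primes in that block is $\ll (x/p_1\cdots p_{j-1})/\log(x/p_1\cdots p_{j-1})$ by Chebyshev; each contributes a factor $1/p_j \ll (p_1\cdots p_{j-1})/x$ to the bound above. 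Hence the sum over~$p_j$ (including the sum over~$m$) is
\[
\ll \frac{x}{p_1\cdots p_{j-1}} \cdot \frac{(x/p_1\cdots p_{j-1})}{\log(x/p_1\cdots p_{j-1})} \cdot \frac{p_1\cdots p_{j-1}}{x} = \frac{x}{(p_1\cdots p_{j-1})\log(x/p_1\cdots p_{j-1})}.
\]
(When $x/3p_1\cdots p_{j-1} < 9$ the inner sums are empty, so we may freely assume this quantity is $\ge 9$ and that $\log(x/p_1\cdots p_{j-1}) \gg \log 3$ throughout.)

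Next I would feed this estimate back into the remaining $(j-1)$-fold sum over $p_1,\dots,p_{j-1}$. Writing $y = x/p_1\cdots p_{j-2}$, the sum over $p_{j-1}$ becomes
\[
\sum_{\substack{p_{j-1}\le y/3 \\ \nu_q(p_{j-1}-1)=\alpha_{j-1}}} \frac{1}{p_{j-1}} \cdot \frac{x/p_1\cdots p_{j-2}}{(x/p_1\cdots p_{j-1})\log(x/p_1\cdots p_{j-1})} \quad\ll\quad \frac{y}{x/p_1\cdots p_{j-2}} \sum_{\substack{p_{j-1}\le y/3 \\ \nu_q(p_{j-1}-1)=\alpha_{j-1}}} \frac{1}{p_{j-1}\log(y/p_{j-1})},
\]
which by Corollary~\ref{sumcor} (the case $\gamma=1$) is $\ll (y/(x/p_1\cdots p_{j-2}))\cdot \log\log y/\log y \ll \log\log x/\log(x/p_1\cdots p_{j-2})$, after canceling the factor $y = x/p_1\cdots p_{j-2}$. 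This is exactly the same shape as the bound we started with, but with one fewer prime variable and one extra factor of $\log\log x$. Iterating this $j-1$ times — each application of Corollary~\ref{sumcor} peels off a variable at the cost of a $\log\log x$ — reduces the whole expression to $\ll_{q,j} (\log\log x)^{j-1}\cdot x/\log x$, which is the claimed bound. This iteration is formally identical to the inductive structure already used in Proposition~\ref{recarg2}, so I would present it as a short induction on the number of remaining prime variables, citing Corollary~\ref{sumcor} at each step.

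The main obstacle is purely bookkeeping rather than conceptual: one must be careful that at every stage the ``$x/p_1\cdots p_i$'' appearing inside the logarithm stays bounded below (so that $\log$ of it is $\gg 1$ and Corollary~\ref{sumcor} applies), and that the accumulating implied constants depend only on~$q$ and~$j$. Both are handled by the observation that the inner sums are empty unless $x/p_1\cdots p_i$ is large, so the troublesome ranges contribute nothing. A minor subtlety is the very first step, where the sum over~$p_j$ is over the \emph{short} interval $(x/3p_1\cdots p_{j-1},\, x/p_1\cdots p_{j-1}]$ rather than $(0, y/3]$; here I use Chebyshev's bound on primes in that interval directly rather than Corollary~\ref{sumcor}, which is why this bound does \emph{not} pick up the extra $\log\log x$ that a full range would give — and this is precisely what makes the second sum in~\eqref{D_0later} genuinely smaller (by a factor of $\log\log x$) than the main term.
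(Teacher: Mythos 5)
Your proof takes essentially the same route as the paper's: the innermost $m$-sum is bounded by a constant (since $p_j>x/3p_1\cdots p_{j-1}$ forces $x/p_1\cdots p_j<3$), Chebyshev handles the $p_j$-sum, and then the remaining prime variables are peeled off one at a time via Corollary~\ref{sumcor}, which is exactly the content of Proposition~\ref{recarg2} that the paper cites. There is a bookkeeping slip in your intermediate display for the $p_{j-1}$-sum (the correct outcome is $\ll (x/p_1\cdots p_{j-2})\log\log x/\log(x/p_1\cdots p_{j-2})$, retaining the factor $x/p_1\cdots p_{j-2}$ that your display dropped), but your stated recursion shape and final bound are correct.
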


\begin{proof}
Since $x/p_1p_2\cdots p_j<3$, the innermost sum has at most two terms, and thus
\begin{multline*}
\sum_{\substack{p_1\le x/3 \\ \nu_q(p_1-1)=\alpha_1}}\sum_{\substack{p_2\le x/3p_1\\p_2\ne p_1\\\nu_q(p_2-1)=\alpha_2}}\cdots \sum_{\substack{p_{j-1}\le x/3p_1\cdots p_{j-2} \\ p_{j-1}\ne p_1,\ldots,p_{j-2} \\ \nu_q(p_{j-1}-1)=\alpha_{j-1}}} \sum_{\substack{x/3p_1\cdots p_{j-1}<p_j\le x/p_1\cdots p_{j-1}\\p_j\ne p_1,\ldots,p_{j-1}\\\nu_q(p_j-1)=\alpha_j}} \hskip-7mm \sum_{\substack{m\le x/p_1\cdots p_j\\q\dnd m\\ (t\mid m\text{ and }t\equiv1\mod q \Rightarrow t\in\{p_1,\dots,p_j\}}} \hskip-2mm 1 \\
\ll \sum_{\substack{p_1\le x/3\\\nu_q(p_1-1)=\alpha_1}}\sum_{\substack{p_2\le x/3p_1\\p_2\ne p_1\\\nu_q(p_2-1)=\alpha_2}}\cdots \sum_{\substack{p_{j-1}\le x/3p_1\cdots p_{j-2} \\ p_{j-1}\ne p_1,\ldots,p_{j-2} \\ \nu_q(p_{j-1}-1)=\alpha_{j-1}}} \sum_{\substack{x/3p_1\cdots p_{j-1}<p_j\le x/p_1\cdots p_{j-1}\\p_j\ne p_1,\ldots,p_{j-1}\\\nu_q(p_j-1)=\alpha_j}} 1.
\end{multline*}
If $x/p_1\cdots p_{j-1} < 3$ then the innermost sum vanishes; otherwise Chebyshev's estimate gives
\begin{align*}
\sum_{\substack{p_1\le x/3\\\nu_q(p_1-1)=\alpha_1}} & \sum_{\substack{p_2\le x/3p_1\\p_2\ne p_1\\\nu_q(p_2-1)=\alpha_2}}\cdots \sum_{\substack{p_{j-1}\le x/3p_1\cdots p_{j-2}\\p_{j-1}\ne p_1,\ldots,p_{j-2}\\\nu_q(p_{j-1}-1)=\alpha_{j-1}}} \sum_{\substack{x/3p_1\cdots p_{j-1}<p_j\le x/p_1\cdots p_{j-1}\\p_j\ne p_1,\ldots,p_{j-1}\\\nu_q(p_j-1)=\alpha_j}}1\\
&\le \sum_{\substack{p_1\le x/3\\\nu_q(p_1-1)=\alpha_1}}\sum_{\substack{p_2\le x/3p_1\\p_2\ne p_1\\\nu_q(p_2-1)=\alpha_2}}\cdots \sum_{\substack{p_{j-1}\le x/3p_1\cdots p_{j-2}\\p_{j-1}\ne p_1,\ldots,p_{j-2}\\\nu_q(p_{j-1}-1)=\alpha_{j-1}}} \pi(x/p_1p_2\cdots p_{j-1})\\
&\ll \sum_{\substack{p_1\le x/3\\\nu_q(p_1-1)=\alpha_1}}\sum_{\substack{p_2\le x/3p_1\\p_2\ne p_1\\\nu_q(p_2-1)=\alpha_2}}\cdots \sum_{\substack{p_{j-1}\le x/3p_1\cdots p_{j-2}\\p_{j-1}\ne p_1,\ldots,p_{j-2}\\\nu_q(p_{j-1}-1)=\alpha_{j-1}}}\frac{x/p_1\cdots p_{j-1}}{\log(x/p_1\cdots p_{j-1})} \\
&=x \ep_q(x,1;\alpha_1,\ldots,\alpha_{j-1})
\end{align*}
in the notation of Definition~\ref{ep_q}. The lemma now follows directly from Proposition~\ref{recarg2}.
\end{proof}

\begin{proposition}\label{firstsum}
Let $q$ be an odd prime, and let $\alpha_1,\dots,\alpha_j\in\N$. For any $x\ge3$,
\begin{multline} \label{J def}
\sum_{\substack{p_1\le x/3\\ \nu_q(p_1-1)=\alpha_1}}\sum_{\substack{p_2\le x/3p_1\\p_2\ne p_1\\\nu_q(p_2-1)=\alpha_2}}\cdots \sum_{\substack{p_j\le x/3p_1\cdots p_{j-1}\\p_j\ne p_1,\ldots,p_{j-1}\\\nu_q(p_j-1)=\alpha_j}}\sum_{\substack{m\le x/p_1\cdots p_j\\q\dnd m\\ (t\mid m\text{ and }t\equiv1\mod q \Rightarrow t\in\{p_1,\dots,p_j\}}}1\\
= \frac{B_q}{q^{\sum_{i=1}^j\alpha_i}} \frac{x(\log\log x)^j}{(\log x)^{1/(q-1)}}+O_q\bigg( \frac{x(\log\log x)^{j-1}}{(\log x)^{1/(q-1)}}\bigg),
\end{multline}
where $B_q$ is as in Definition~\ref{Bq def}.
\end{proposition}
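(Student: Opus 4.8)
The plan is to estimate the innermost sum over $m$ using Proposition~\ref{sdsum2} and then collapse the resulting iterated sum over $p_1,\dots,p_j$ by invoking Propositions~\ref{recarg} and~\ref{recarg2}, so that essentially all of the analytic work has already been carried out. First I would check that Proposition~\ref{sdsum2} is legitimately applicable throughout the range of summation: each $p_i$ satisfies $\nu_q(p_i-1)=\alpha_i\ge1$ and hence $p_i\equiv1\mod q$; the primes $p_1,\dots,p_j$ are pairwise distinct because of the conditions $p_i\ne p_1,\dots,p_{i-1}$; and $y:=x/p_1\cdots p_j\ge3$ because the innermost constraint $p_j\le x/3p_1\cdots p_{j-1}$ forces $p_1\cdots p_j\le x/3$. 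Applying the proposition with this $y$ and these $j$ primes replaces the sum over $m$ by
\[
\frac{B_q y}{(\log y)^{1/(q-1)}}\prod_{i=1}^j\Bigl(1-\frac1{p_i}\Bigr)^{-1}+O_j\Bigl(\frac y{(\log y)^{1+1/(q-1)}}\Bigr).
\]

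To handle the main term, since $y=x/p_1\cdots p_j$ I would rewrite $y\prod_{i=1}^j(1-1/p_i)^{-1}=x\prod_{i=1}^j\frac1{p_i}(1-1/p_i)^{-1}$ and use the identity $\frac1p\bigl(1-\frac1p\bigr)^{-1}=\frac1{p-1}=\frac{p+O(1)}{p^2}$; this exhibits the total contribution of these main terms to the left-hand side of~\eqref{J def} as $B_q x$ times an expression of exactly the shape of $S_q(x;0;\alpha_1,\dots,\alpha_j)$ from Definition~\ref{S_q}. Proposition~\ref{recarg} (with $k=0$) then lets me replace $S_q(x;0;\alpha_1,\dots,\alpha_j)$ by $q^{-\sum_{i=1}^j\alpha_i}S_q(x;j)$, and unwinding the definition~\eqref{Sqxk def} of $S_q(x;j)$ yields precisely the main term $\frac{B_q}{q^{\sum_{i=1}^j\alpha_i}}\frac{x(\log\log x)^j}{(\log x)^{1/(q-1)}}$ together with an error of size $O_q\bigl(x(\log\log x)^{j-1}/(\log x)^{1/(q-1)}\bigr)$, matching the claim.

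For the error term coming from Proposition~\ref{sdsum2}, I would note that $y/(\log y)^{1+1/(q-1)}=\frac{x}{p_1\cdots p_j\,(\log(x/p_1\cdots p_j))^{1+1/(q-1)}}$, so summing the $O_j$-terms over $p_1,\dots,p_j$ produces a total contribution of $O_{q,j}\bigl(x\,\ep_q(x,1+1/(q-1);\alpha_1,\dots,\alpha_j)\bigr)$ in the notation of Definition~\ref{ep_q}. Proposition~\ref{recarg2} bounds this by $\ll_{q,j}x(\log\log x)^j/(\log x)^{1+1/(q-1)}$, which is $\ll x(\log\log x)^{j-1}/(\log x)^{1/(q-1)}$ and hence is absorbed into the error term already present. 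Combining the two contributions establishes~\eqref{J def}.

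There is no single hard step, since the analytic substance has been front-loaded into Propositions~\ref{sdsum2}, \ref{recarg}, and~\ref{recarg2}; the points requiring a little care are confirming that $y\ge3$ so that Proposition~\ref{sdsum2} applies on the whole range, correctly matching the factor $\prod_{i=1}^j\frac1{p_i}(1-1/p_i)^{-1}$ to the factors $\frac{p_i+O(1)}{p_i^2}$ appearing in Definition~\ref{S_q}, and verifying that the secondary error term is dominated by the one in the statement.
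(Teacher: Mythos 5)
Your proposal is correct and follows essentially the same route as the paper's proof: apply Proposition~\ref{sdsum2} to the innermost sum (after noting $x/p_1\cdots p_j\ge3$), recognize the resulting main and error terms as $B_qx\,S_q(x;0;\alpha_1,\dots,\alpha_j)$ and $O\bigl(x\,\ep_q(x,1+1/(q-1);\alpha_1,\dots,\alpha_j)\bigr)$, and finish with Propositions~\ref{recarg} and~\ref{recarg2}. The extra care you take in matching $\frac1{p_i}(1-1/p_i)^{-1}=\frac{p_i+O(1)}{p_i^2}$ to Definition~\ref{S_q} is exactly the bookkeeping the paper performs implicitly.
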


\begin{proof}
Throughout this proof, let~$J$ denote the left-hand side of equation~\eqref{J def}.
Note that the condition $p_j\le x/3p_1\cdots p_{j-1}$ in the second-to-last sum implies that the bound $x/p_1\cdots p_j$ in the innermost sum is at least~$3$. Therefore, by Proposition~\ref{sdsum2},
\begin{align*}
J&=\sum_{\substack{p_1\le x/3\\\nu_q(p_1-1)=\alpha_1}}\sum_{\substack{p_2\le x/3p_1\\p_2\ne p_1\\\nu_q(p_2-1)=\alpha_2}}\cdots \sum_{\substack{p_j\le x/3p_1\cdots p_{j-1}\\p_j\ne p_1,\ldots,p_{j-1}\\\nu_q(p_j-1)=\alpha_j}}\Bigg\{ B_q\frac x{p_1\cdots p_j} \bigg( \log\frac x{p_1\cdots p_j}\bigg)^{-1/(q-1)} \prod_{i=1}^j\bigg(1-\frac 1{p_i}\bigg)^{-1} \\
&\qquad{}+O\bigg( \frac x{p_1\cdots p_j}\bigg( \log\frac x{p_1 \cdots p_j}\bigg)^{-1-1/(q-1)}\bigg) \Bigg\} \\
&=B_q x\sum_{\substack{p_1\le x/3\\\nu_q(p_1-1)=\alpha_1}}\cdots \sum_{\substack{p_j\le x/3p_1\cdots p_{j-1}\\p_j\ne p_1,\ldots,p_{j-1}\\\nu_q(p_j-1)=\alpha_j}}\frac 1{p_1\cdots p_j} \bigg( \log\frac x{p_1\cdots p_j}\bigg)^{-1/(q-1)}\prod_{i=1}^j\bigg(1-\frac 1{p_i}\bigg)^{-1}\\
&\hspace{1.5cm}+O\bigg(x\sum_{\substack{p_1\le x/3\\\nu_q(p_1-1)=\alpha_1}}\sum_{\substack{p_2\le x/3p_1\\p_2\ne p_1\\\nu_q(p_2-1)=\alpha_2}}\cdots \sum_{\substack{p_j\le x/3p_1\cdots p_{j-1}\\p_j\ne p_1,\ldots,p_{j-1}\\\nu_q(p_j-1)=\alpha_j}}\frac 1{p_1\cdots p_j}\bigg( \log\frac x{p_1 \cdots p_j}\bigg)^{-1-1/(q-1)}\bigg) \\
&=B_qx S_q(x,0;\alpha_1,\dots,\alpha_j) + O\bigg( x \ep_q\bigg( x,1+\frac1{q-1};\alpha_1,\dots,\alpha_j \bigg) \bigg)
\end{align*}
in the notation of Definitions~\ref{S_q} and~\ref{ep_q}. Thus, by Propositions~\ref{recarg} and~\ref{recarg2},
\begin{align*}
J &= B_qx \frac1{q^{\sum_{i=1}^j} \alpha_i} S_q(x,j) + O\bigg( \frac{x (\log\log x)^j}{(\log x)^{1+1/(q-1)}} \bigg) \\
&= B_qx \frac1{q^{\sum_{i=1}^j} \alpha_i} \bigg( \frac{(\log\log x)^j}{(\log x)^{1/(q-1)}}+O_q\bigg( \frac{(\log\log x)^{j-1}}{(\log x)^{1/(q-1)}}\bigg) \bigg) + O\bigg( \frac{x (\log\log x)^j}{(\log x)^{1+1/(q-1)}} \bigg)
\end{align*}
by equation~\eqref{Sqxk def}, which establishes the proposition.
\end{proof}

The work in this section leads immediately to an asymptotic formula for~$D_0(\Z_{q^\balpha},x)$.

\begin{theorem} \label{D0 prop}
Let $q$ be an odd prime, and let $\balpha=(\alpha_1,\dots,\alpha_j)$ be a partition. For any $x\ge3$,
\[
D_0(\Z_{q^\balpha},x) = C(\balpha) \frac{B_q}{q^{\sum_{i=1}^j\alpha_i}} \frac{x(\log\log x)^j}{(\log x)^{1/(q-1)}} + O_{q,\balpha}\bigg(\frac{x(\log\log x)^{j-1}}{(\log x)^{1/(q-1)}}\bigg).
\]
\end{theorem}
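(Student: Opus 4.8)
The plan is to combine the two halves of the split in equation~\eqref{D_0later} directly. By Proposition~\ref{initial D0 prop}, the quantity $D_0(\Z_{q^\balpha},x)$ equals $C(\balpha)$ times the sum of the two iterated sums displayed in equation~\eqref{D_0later}. The first of these iterated sums is precisely the left-hand side of equation~\eqref{J def}, so Proposition~\ref{firstsum} evaluates it as
\[
\frac{B_q}{q^{\sum_{i=1}^j\alpha_i}} \frac{x(\log\log x)^j}{(\log x)^{1/(q-1)}} + O_q\bigg( \frac{x(\log\log x)^{j-1}}{(\log x)^{1/(q-1)}} \bigg).
\]
The second iterated sum is exactly the quantity bounded in Lemma~\ref{secondsum}, which shows it is $\ll_{q,\balpha} x(\log\log x)^{j-1}/\log x$. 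Since $1/\log x \le 1/(\log x)^{1/(q-1)}$ for $q\ge3$ and $x\ge3$ (as $1/(q-1)\le 1$), this contribution is absorbed into the error term already present.

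Putting these together, I would write
\[
D_0(\Z_{q^\balpha},x) = C(\balpha)\bigg( \frac{B_q}{q^{\sum_{i=1}^j\alpha_i}} \frac{x(\log\log x)^j}{(\log x)^{1/(q-1)}} + O_{q,\balpha}\bigg( \frac{x(\log\log x)^{j-1}}{(\log x)^{1/(q-1)}} \bigg) \bigg),
\]
and then absorb the fixed constant $C(\balpha)$ into the implied constant of the error term (it depends only on $\balpha$, which is already a permitted dependency), yielding the stated formula.

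There is essentially no obstacle here: this theorem is purely a bookkeeping step that assembles Proposition~\ref{initial D0 prop}, Proposition~\ref{firstsum}, and Lemma~\ref{secondsum}. The only points requiring a moment's care are (i) verifying that the $1/\log x$ saving in Lemma~\ref{secondsum} is at least as strong as the $1/(\log x)^{1/(q-1)}$ in the main error term, which holds because $q$ is an odd prime and hence $q-1\ge2$; and (ii) noting that multiplying a main term and an error term by the constant $C(\balpha)$ preserves both the shape of the main term and the order of the error term, with the dependence on $\balpha$ already accounted for in the subscript of the $O$. I would also note in passing that $C(\balpha)$ could in principle be zero only if some factorial in equation~\eqref{Calpha def} were infinite, which never happens, so the main term is genuinely present whenever the partition is nonempty.
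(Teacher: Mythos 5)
Your proof is correct and is exactly the argument the paper gives: assemble the split in equation~\eqref{D_0later}, apply Proposition~\ref{firstsum} to the main piece, apply Lemma~\ref{secondsum} to the tail piece, observe $1/\log x\le 1/(\log x)^{1/(q-1)}$ so the tail is absorbed, and fold the constant $C(\balpha)$ into the $\balpha$-dependent implied constant. The paper compresses this to a single sentence, but the bookkeeping you spell out is precisely what that sentence is invoking.
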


\begin{proof}
Thanks to the expression~\eqref{D_0later}, the theorem follows immediately from Proposition~\ref{firstsum} and Lemma~\ref{secondsum}.
\end{proof}

\subsection{Proof of Theorem~\ref{main}}

As mentioned earlier, each of the counting functions $D_k(H,x)$, if nonzero, is equal to $D_0(\Z_{H'},x)$ for a certain $q$-group~$H'$ depending on~$H$ and~$k$. We make this precise enough for our purposes in the following lemma.

\begin{lemma} \label{handle Dk lemma}
Let $q$ be an odd prime, let $\balpha=(\alpha_1,\dots,\alpha_j)$ be a partition, and let $k\in\N$.
\begin{enumerate}
\item When $k=1$, we have $D_1(\Z_{q^\balpha},x) = D_0(\Z_{q^\balpha},\frac xq)$.
\item If $k\ge \alpha_1+2$, then $D_k(\Z_{q^\balpha},x)=0$.
\item For any $k\ge2$, we have $D_k(\Z_{q^\balpha},x)\ll_{q,\balpha,k} x(\log\log x)^{j-1}/(\log x)^{1/(q-1)}$.
\end{enumerate}
\end{lemma}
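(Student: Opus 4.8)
\textbf{Proof plan for Lemma~\ref{handle Dk lemma}.}

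The plan is to analyze how writing $n = q^k n'$ with $q \nmid n'$ affects the Sylow $q$-subgroup $G_q(n)$, and then reduce each case to a statement already proved about $D_0$. For part~(a), when $k=1$ we have $n = qn'$ with $q \nmid n'$; since $q$ is odd, the Chinese remainder theorem gives $\Z_n^\times \cong \Z_q^\times \times \Z_{n'}^\times$, and $\Z_q^\times \cong \Z_{q-1}$ has order coprime to~$q$. Hence $G_q(n) = G_q(n')$, and the map $n \mapsto n' = n/q$ is a bijection between $\{n \le x : \nu_q(n)=1,\ G_q(n) \cong \Z_{q^\balpha}\}$ and $\{n' \le x/q : \nu_q(n')=0,\ G_q(n') \cong \Z_{q^\balpha}\}$, which yields $D_1(\Z_{q^\balpha},x) = D_0(\Z_{q^\balpha}, x/q)$ directly from Definition~\ref{DkHx def}.

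For part~(b), if $\nu_q(n) = k \ge 2$, then the Sylow $q$-subgroup of $\Z_{q^k}^\times \cong \Z_{q^{k-1}(q-1)} \cong \Z_{q^{k-1}} \times \Z_{q-1}$ is exactly $\Z_{q^{k-1}}$. Writing $n = q^k n'$ with $q \nmid n'$ and applying CRT, we get $G_q(n) \cong \Z_{q^{k-1}} \times G_q(n')$, so $G_q(n)$ always has a cyclic factor $\Z_{q^{k-1}}$ forced into its invariant factor decomposition. If $G_q(n) \cong \Z_{q^\balpha}$, then the largest part $\alpha_1$ of $\balpha$ must be at least $k-1$; equivalently, if $k \ge \alpha_1 + 2$, no such $n$ exists, giving $D_k(\Z_{q^\balpha},x) = 0$. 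For part~(c), with $2 \le k \le \alpha_1+1$ fixed, the same decomposition shows that $n = q^k n'$ contributes to $D_k(\Z_{q^\balpha},x)$ only if $G_q(n') \cong \Z_{q^{\balpha'}}$ where $\balpha'$ is obtained from $\balpha$ by removing one part equal to $k-1$ (such a part must exist, and there are at most finitely many choices of which copies to remove if $k-1$ is repeated, but the group $\Z_{q^{\balpha'}}$ is determined). One deduces $D_k(\Z_{q^\balpha}, x) \le D_0(\Z_{q^{\balpha'}}, x/q^k)$, and since $\balpha'$ has length $j-1$ (one fewer part than $\balpha$, as $k-1 \ge 1$), Theorem~\ref{D0 prop} gives $D_0(\Z_{q^{\balpha'}}, x/q^k) \ll_{q,\balpha,k} x(\log\log x)^{j-1}/(\log x)^{1/(q-1)}$, as claimed.

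The main obstacle is purely bookkeeping: making sure the reduction $n \mapsto n' = n/q^k$ correctly tracks the isomorphism type, i.e., verifying that $\balpha'$ really is $\balpha$ with a single part $k-1$ deleted (and in particular has length exactly $j-1$), and confirming that $k-1 \ge 1$ is what prevents the length from staying at $j$. There is a small subtlety when $k-1$ occurs with multiplicity greater than one in $\balpha$: then different integers $n'$ with slightly different "which part was removed" bookkeeping all map to the same group $\Z_{q^{\balpha'}}$, but this only helps, since we are proving an upper bound and $D_k(\Z_{q^\balpha},x)$ is bounded by a single $D_0$ term. No delicate analytic estimate is needed beyond citing Theorem~\ref{D0 prop}.
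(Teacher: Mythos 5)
Your proof is correct and follows essentially the same route as the paper's: write $n=q^k m$ with $q\nmid m$, apply CRT to get $\Z_n^\times \cong \Z_{q^{k-1}(q-1)}\times\Z_m^\times$, deduce $G_q(n)\cong\Z_{q^{k-1}}\times G_q(m)$, and reduce each part of the lemma to a statement about $D_0$ via Theorem~\ref{D0 prop}. One small point worth tightening in part~(c): when $k-1\notin\balpha$ the statement ``such a part must exist'' doesn't hold — rather $D_k(\Z_{q^\balpha},x)=0$ outright (the paper notes this explicitly), so the bound is vacuous in that case; also the correspondence $n\mapsto n/q^k$ is actually a bijection, so you may replace your $\le$ with $=$, as $\balpha'=\balpha\setminus\{k-1\}$ is a well-defined multiset difference regardless of the multiplicity of $k-1$.
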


\begin{proof}
In all parts, the integers counted by $D_k(\Z_{q^\balpha},x) = \#\{n\le x \colon \nu_q(n)=k,\, G_q(n)=\Z_{q^\balpha}\}$ can be written as $n=q^km$ where $m\le \frac x{q^k}$ and $q\nmid m$; we also have $\Z_n^\times \cong \Z_{q^{k-1}(q-1)} \times \Z_m^\times$, and in particular the Sylow $q$-subgroup of $\Z_n^\times$ is congruent to the product of $\Z_{q^{k-1}}$ and the Sylow $q$-subgroup of $\Z_m^\times$.

When $k=1$, these two Sylow $q$-subgroups are identical, and therefore $D_1(\Z_{q^\balpha},x) = D_0(\Z_{q^\balpha},\frac xq)$ as claimed in part~(a).

When $k\ge\alpha_1+2$, the Sylow $q$-subgroup of $\Z_n^\times$ will include a copy of $\Z_{q^{k-1}}$; since the largest primary subgroup of $\Z_{q^\balpha}$ is $\Z_{q^{\alpha_1}}$, the fact that $k-1>\alpha_1$ makes it impossible for that Sylow $q$-subgroup to equal $\Z_{q^\balpha}$, as claimed in part~(b). The same is true if $k-1\le\alpha_1$ but $k-1\notin\balpha$.

Finally, suppose $k\ge2$ and $k-1\in\balpha$. Let $\balpha'=\balpha\setminus\{k-1\}$ denote the partition obtained from~$\balpha$ by removing one occurrence of $k-1$. Then the fact that $\Z_n^\times \cong \Z_{q^{k-1}(q-1)} \times \Z_m^\times$ implies that the Sylow $q$-subgroup of $\Z_m^\times$ equals $\Z_{q^{\balpha'}}$, and therefore
\[
D_k(\Z_{q^\balpha},x) = D_0\bigg( \Z_{q^{\balpha'}}, \frac x{q^k} \bigg) \ll_{q,\balpha'} \frac{(x/q^k)(\log\log (x/q^k))^{j-1}}{(\log (x/q^k))^{1/(q-1)}} \ll_{q,\balpha,k} \frac{x(\log\log x)^{j-1}}{(\log x)^{1/(q-1)}},
\]
as claimed in part~(c).
\end{proof}

We have now completed the last preparatory step necessary to prove our main theorem, which we do after defining the leading constant that emerges from the calculation.

\begin{definition} \label{Eqalpha def}
For a prime $q$ and a partition $\balpha=(\alpha_1,\dots,\alpha_j)$, define
\[
E_q(\balpha)=\frac{q+1}{q^{1+\sum_{i=1}^j \alpha_i}}.
\]
Recall also~$B_q$ and~$C(\balpha)$ from Definition~\ref{Bq def} and equation~\eqref{Calpha def}, respectively:
\begin{align*}
B_q &= \frac{1}{\Gamma(1-1/(q-1))}\bigg(1-\frac1q \bigg)^{1-1/(q-1)}\prod_{\substack{p\ne q\\ p\not\equiv 1 \mod q}} \bigg(1-\frac1{p^{k_p}} \bigg)^{-1/k_p}\prod_{\chi\ne \chi_0}L(1,\chi)^{-1/(q-1)} \\
C(\balpha) &= \prod_{u=1}^\infty \frac1{(a_u-a_{u+1})!} = \prod_{u=1}^{\alpha_1} \frac1{(a_u-a_{u+1})!},
\end{align*}
where $(a_1,a_2,\dots)$ is the conjugate of the partition~$\balpha$.
Then, given the finite abelian $q$-group $\Z_{q^\balpha}$, define the constant
\[
K(\Z_{q^\balpha})=B_qC(\balpha)E_q(\balpha).
\]
\end{definition}

\begin{proof}[Proof of Theorem~\ref{main}]
Write $\balpha=(\alpha_1,\dots,\alpha_j)$.
Beginning with Definition~\ref{DkHx def},
\begin{align*}
D(\Z_{q^\balpha},x) = \sum_{k=0}^{\infty} D_k(\Z_{q^\balpha},x) &= \sum_{k=0}^{\alpha_1+1} D_k(\Z_{q^\balpha},x) \\
&= D_0(\Z_{q^\balpha},x) + D_0\big( \Z_{q^\balpha},\tfrac xq \big) + O_{q,\balpha} \bigg( \frac{x(\log\log x)^{j-1}}{(\log x)^{1/(q-1)}} \bigg)
\end{align*}
by Lemma~\ref{handle Dk lemma}. Now Theorem~\ref{D0 prop} gives
\begin{multline*}
D(\Z_{q^\balpha},x) = C(\balpha) \frac{B_q}{q^{\sum_{i=1}^j\alpha_i}} \frac{x(\log\log x)^j}{(\log x)^{1/(q-1)}} + C(\balpha) \frac{B_q}{q^{\sum_{i=1}^j\alpha_i}} \frac{(x/q)(\log\log (x/q))^j}{(\log (x/q))^{1/(q-1)}} \\
+ O_{q,\balpha} \bigg( \frac{x(\log\log x)^{j-1}}{(\log x)^{1/(q-1)}} \bigg),
\end{multline*}
which establishes the theorem since $(1+1/q)/q^{\sum_{i=1}^j\alpha_i} = E_q(\balpha)$ and $j=\ell(\balpha)$.
\end{proof}

\section{Maximally non-cyclic multiplicative groups---proof of Theorem~\ref{mnc theorem}} \label{mnc section}

Rather than focusing on local Sylow subgroups, we now wish to focus on the global structure of the group~$\Z_n^\times$, and in particular (as described in the introduction) when this group is as far from being cyclic as possible. To define this notion precisely, recall that the {\em primary decomposition} of a finite abelian group~$G$ is the unique isomorphism of the shape $G\cong\Z_{p_1^{r_1}}\times\cdots\times\Z_{p_k^{r_k}}$ where the $p_j^{r_j}$ are prime powers (with $r_j=1$ possible), while its {\em invariant factor decomposition} is the unique isomorphism of the shape $G\cong\Z_{d_1}\times\cdots\times \Z_{d_\ell}$ where $d_1\mid d_2\mid\cdots\mid d_\ell$.

\begin{definition} \label {mncdef}
Let $G$ be a finite abelian group of cardinality~$m$. We say that~$G$ is \textit{maximally non-cyclic} if any of the four following equivalent conditions hold:
\begin{enumerate}
\item each factor of the primary decomposition of~$G$ is of the form $\Z_p$ for some prime~$p$;
\item for any prime $p$, the Sylow $p$-subgroup of~$G$ is an elementary $p$-group, that is, is of the form $\Z_p\times \Z_p\times\cdots \times\Z_p$;
\item the invariant factors~$d_j$ are squarefree for every $1\le j\le \ell$;
\item the largest invariant factor~$d_\ell$ is minimal among all finite abelian groups of order~$m$.
\end{enumerate}
That these four conditions are indeed equivalent is a straightforward exercise in undergraduate algebra. (We remark in passing that it is possible for a finite abelian group to be both cyclic and maximally non-cyclic: such groups are precisely the cyclic groups of squarefree order, which are the orders for which there exists exactly one finite abelian group.)
\end{definition}

The most intuitive definitions of maximally non-cyclic are conditions~(a)/(b) (which are nearly identical) and condition~(d); condition~(c), on the other hand, is less intuitive but turns out to be useful in the proof of the characterization of integers whose multiplicative group is maximally non-cyclic.
We remark that these equivalent conditions imply that the length~$\ell$ of the invariant factor decomposition is maximal among all finite abelian groups of order~$m$, although this is not a two-way implication as shown by the examples $\Z_6\times \Z_6$ (which is maximally non-cyclic) and $\Z_2\times \Z_{18}$ (which is not) corresponding to $m=36$ and $\ell=2$.

Our goal in this section is to asymptotically evaluate the counting function for the number of integers~$n\le x$ such that $\Z_n^\times$ is maximally non-cyclic. As in Section~\ref{DHx section}, we can accomplish this evaluation by giving a characterization of this group-theoretic property in terms of the prime factorization of~$n$.

\begin{proposition} \label{max non cyc characterization prop}
For any $n\in \N$, the group $\Z_n^\times$ is maximally non-cyclic if and only if:
\begin{enumerate}
\item $2^4\dnd n$;
\item $p^3\dnd n$ for every odd prime~$p$; and
\item $p-1$ is squarefree for every $p\mid n$.
\end{enumerate}
\end{proposition}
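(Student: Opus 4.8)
The plan is to decompose $\Z_n^\times$ via the Chinese remainder theorem according to the prime-power factorization of~$n$ and then to detect maximal non-cyclicity through characterization~(a) of Definition~\ref{mncdef}. The key elementary observation is that a finite direct product of cyclic groups $\Z_{m_1}\times\cdots\times\Z_{m_\ell}$ is maximally non-cyclic if and only if each~$m_i$ is squarefree: splitting each $\Z_{m_i}$ into its prime-power cyclic constituents exhibits the primary decomposition of the product, and condition~(a) of Definition~\ref{mncdef} requires every one of these constituents to be of the form~$\Z_t$, which happens exactly when no~$m_i$ is divisible by the square of a prime.

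Writing $n=2^\beta p_1^{\beta_1}\cdots p_r^{\beta_r}$ with $p_1,\dots,p_r$ distinct odd primes, the Chinese remainder theorem together with the standard isomorphisms $\Z_{p^\beta}^\times\cong\Z_{p^{\beta-1}(p-1)}$ for odd~$p$ and $\Z_{2^\beta}^\times\cong\Z_2\times\Z_{2^{\beta-2}}$ for $\beta\ge2$ (with $\Z_{2^0}$ interpreted as trivial, so that $\Z_4^\times\cong\Z_2$) yields
\[
\Z_n^\times \cong \Z_{2^\beta}^\times \times \prod_{i=1}^r \Z_{p_i^{\beta_i-1}(p_i-1)},
\]
where $\Z_{2^\beta}^\times$ is trivial when $\beta\le1$. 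By the criterion of the previous paragraph, $\Z_n^\times$ is maximally non-cyclic if and only if $\Z_{2^\beta}^\times$ is maximally non-cyclic and $p_i^{\beta_i-1}(p_i-1)$ is squarefree for every~$i$. For the prime~$2$, the group $\Z_{2^\beta}^\times$ is maximally non-cyclic precisely when $\beta\le3$ (it is trivial, $\Z_2$, or $\Z_2\times\Z_2$ in those cases, and contains a copy of~$\Z_4$ as soon as $\beta\ge4$), and $\beta\le3$ is exactly condition~(a). For each odd prime~$p_i$, the coprimality $\gcd(p_i,p_i-1)=1$ shows that $p_i^{\beta_i-1}(p_i-1)$ is squarefree if and only if both $\beta_i-1\le1$ and $p_i-1$ is squarefree; the first of these says $p_i^3\dnd n$, which is condition~(b), and the second is condition~(c) restricted to the odd prime divisors, while condition~(c) at the prime~$2$ is automatic because $2-1=1$. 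Assembling these equivalences over all prime divisors of~$n$ gives the proposition.

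There is no serious obstacle here; the one point demanding care is the asymmetry between~$2$ and the odd primes. Because $\Z_{2^\beta}^\times$ carries an extra cyclic factor~$\Z_2$ on top of a cyclic group of order~$2^{\beta-2}$, rather than being cyclic of order~$2^{\beta-1}$ as the naive analogue of the odd-prime case would suggest, the squarefreeness constraint on the $2$-part of~$n$ translates into $2^4\dnd n$ instead of $2^3\dnd n$.
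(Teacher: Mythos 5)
Your proof is correct and follows essentially the same route as the paper's: decompose $\Z_n^\times$ by the Chinese remainder theorem into the groups $\Z_{p^r}^\times$ with their standard cyclic structure, and reduce maximal non-cyclicity to squarefreeness of the orders of the resulting cyclic factors. The only cosmetic difference is that you derive the squarefreeness criterion once and for all from condition~(a) of Definition~\ref{mncdef}, whereas the paper invokes condition~(c) for the factors $\Z_{p-1}$; both handle the asymmetry at $p=2$ identically.
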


\begin{proof}
By part~(a) of Definition~\ref{mncdef}, together with uniqueness of primary decompositions, a product $H_1\times\cdots\times H_k$ of finite abelian groups is maximally non-cyclic if and only if each~$H_j$ is maximally non-cyclic. In particular, $\Z_n^\times$ is maximally non-cyclic if and only if $\Z_{p^r}^\times$ is maximally non-cyclic for every $p^r\| n$. Since
\begin{equation*}
\Z_{2^r}^\times \cong \begin{cases}
\text{trivial},  &\text{ if } r\le1,\\
\Z_2,  &\text{ if } r=2\\
\Z_2\times \Z_2,  &\text{ if } r=3\\
\Z_{2^{r-2}}\times \Z_2,  &\text{ if } r\ge 4,
\end{cases}
\end{equation*}
we see that $2^4\dnd n$ is one necessary condition. Similarly, since for odd primes~$p$,
\begin{equation*}
\Z_{\phi(p^r)} \cong \begin{cases}
\Z_{p-1}, &\text{ if } r=1\\
\Z_p\times \Z_{p-1},  &\text{ if } r=2\\
\Z_{p^{r-1}}\times\Z_{p-1},  &\text{ if } r\ge 3,
\end{cases}
\end{equation*}
we see that $p^3\dnd n$ for all odd primes~$p$ is a second necessary condition. Finally, by part~(c) of Definition~\ref{mncdef}, the cyclic group $\Z_{p-1}$ is maximally non-cyclic if and only if $p-1$ is squarefree, which is the source of the third and final condition.
\end{proof}

\begin{definition}\label{Artin}
Let $\xi$ denote Artin's constant
\[
\xi=\prod_p \bigg(1-\frac 1{p(p-1)}\bigg),
\]
and define the positive constant
\begin{align*}
A &= \frac {15}{14\Gamma(\xi)}\lim_{x\to\infty}\bigg\{ \prod_{\substack{p\le x\\\mu^2(p-1)=1}}\bigg(1+\frac 1p+\frac 1{p^2}\bigg) \prod_{p\le x}\bigg(1-\frac 1p\bigg)^\xi\bigg\} \\
&= \frac {15}{14\Gamma(\xi)} \prod_p \bigg(1+\frac{(p+1)\mu^2(p-1)}{p^2}\bigg) \bigg(1-\frac 1p\bigg)^\xi.
\end{align*}
We will see that the product defining~$A$ converges in the proof of Theorem~\ref{mnc theorem} below.
\end{definition}

It is known that Artin's constant is also the density of the primes~$p$ with the property that $p-1$ is squarefree; we provide a proof for the sake of completeness.

\begin{lemma}\label{sfsum}
For $x\ge2$, we have
$\displaystyle
\#\{p\le x\colon p-1 \text{ is squarefree}\} = \frac{\xi x}{\log x}  + O\bigg( \frac {x}{\log^2 x}\bigg).
$
\end{lemma}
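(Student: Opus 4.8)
The plan is to count the primes $p\le x$ for which $p-1$ is squarefree by sieving out those $p$ such that $d^2\mid p-1$ for some prime $d$, via inclusion–exclusion. Writing $\mu^2(p-1)=\sum_{d^2\mid p-1}\mu(d)$, we have
\[
\#\{p\le x\colon p-1\text{ squarefree}\} = \sum_{p\le x}\sum_{\substack{d\ge1\\ d^2\mid p-1}}\mu(d) = \sum_{d\ge1}\mu(d)\,\pi(x;d^2,1),
\]
where $\pi(x;d^2,1)$ counts primes $p\le x$ with $p\equiv1\pmod{d^2}$. The main term should come from the small moduli $d$, where $\pi(x;d^2,1)\sim \li(x)/\phi(d^2)$, and the contribution of the tail must be shown to be an acceptable error.

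The key steps, in order: First I would split the sum at $d\le z$ and $d>z$ for a suitable parameter $z$ (a small power of $\log x$ will do, say $z=(\log x)^{2}$, since we only want an error term of size $x/\log^2 x$). For $d\le z$, apply the Siegel–Walfisz theorem: $\pi(x;d^2,1) = \li(x)/\phi(d^2) + O\bigl(x\exp(-c\sqrt{\log x})\bigr)$ uniformly in this range, so that $\sum_{d\le z}\mu(d)\pi(x;d^2,1) = \li(x)\sum_{d\le z}\mu(d)/\phi(d^2) + O\bigl(z\,x\exp(-c\sqrt{\log x})\bigr)$, and the accumulated error is negligible compared to $x/\log^2 x$. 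Then I would complete the sum $\sum_{d\le z}\mu(d)/\phi(d^2)$ to the infinite sum $\sum_{d=1}^\infty \mu(d)/\phi(d^2)$, whose tail is $O(1/z)$ (each term is $\ll 1/d^2$ up to $\log\log$ factors, or more simply $\ll d^{-2+\epsilon}$), giving a contribution of $O(\li(x)/z) = O(x/\log^3 x)$, which is absorbed. The resulting constant is recognized as an Euler product: since $\mu(d)/\phi(d^2)$ is multiplicative and vanishes unless $d$ is squarefree, in which case $\phi(d^2)=d\,\phi(d)$, we get
\[
\sum_{d=1}^\infty \frac{\mu(d)}{\phi(d^2)} = \prod_p\Bigl(1-\frac1{p(p-1)}\Bigr) = \xi,
\]
Artin's constant. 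Finally, for the tail $d>z$ I would bound $\sum_{d>z}\pi(x;d^2,1)$ trivially: a prime $p\le x$ with $p\equiv1\pmod{d^2}$ forces $d^2\le x$, so the number of admissible $p$ is $\ll x/d^2+1$; summing over $z<d\le\sqrt x$ gives $\ll x/z + \sqrt x \ll x/\log^2 x$. (One can do slightly better with Brun–Titchmarsh for the intermediate range $z<d\le x^{1/2-\epsilon}$ and the trivial bound beyond, but the crude estimate already suffices for the claimed error term.) Combining the three contributions and using $\li(x) = x/\log x + O(x/\log^2 x)$ yields the lemma.

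I expect the only genuine obstacle to be the usual one in such estimates, namely controlling the error uniformly in $d$: one must ensure the Siegel–Walfisz range $d\le(\log x)^{2}$ is compatible with the trivial/Brun–Titchmarsh bound taking over, and that the number-of-terms loss $z$ in the Siegel–Walfisz error does not overwhelm the savings $\exp(-c\sqrt{\log x})$ — this is immediate since $z$ is only polylogarithmic. The identification of the constant as $\xi$ and the convergence of the Dirichlet series are entirely routine, as is completing the sum. No deep input beyond Siegel–Walfisz (or even just a Bombieri–Vinogradov-on-average argument, which would also work) is required.
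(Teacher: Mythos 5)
Your proposal is correct and takes essentially the same approach as the paper: expand $\mu^2(p-1)$ via $\sum_{d^2\mid p-1}\mu(d)$, split at $z=\log^2 x$, apply Siegel--Walfisz for small $d$ and the trivial bound $\pi(x;d^2,1)\le x/d^2$ for the tail, then complete and factor the resulting sum into Artin's constant. The only cosmetic difference is your additional $+1$ in the trivial tail bound, which the paper avoids by noting the sum over $d$ is automatically restricted to $d\le\sqrt{x}$.
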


\begin{proof}
Using the well-known identity $\mu^2(n)=\sum_{d^2\mid n}\mu(d)$~\cite[equation~(2.4)]{MV}, we have
\begin{align*}
\#\{p\le x\colon p-1 \text{ is squarefree}\} &= \sum_{p\le x}\mu(p-1)^2 \\
&= \sum_{p\le x}\sum_{d^2\mid p-1}\mu(d) = \sum_{d^2\le x}\mu(d)\sum_{\substack{p\le x\\d^2\mid p-1}}1 = \sum_{d\le \sqrt{x}}\mu(d)\pi(x;d^2,1).
\end{align*}
For $d\le \log^2 x$ we use the Siegel--Walfisz theorem~\cite[Corollary~11.21]{MV}
\[
\pi(x;d^2,1)=\frac{\li(x)}{\phi(d^2)}+O(xe^{-c_1\sqrt{\log x}}) = \frac{\li(x)}{d\phi(d)} + O\bigg( \frac x{\log^4x} \bigg),
\]
while for $\log^2x < d \le \sqrt x$ we use the trivial estimate $\pi(x;d^2,1)\le x/d^2$. We find that
\begin{align*}
\sum_{d\le \sqrt{x}}\mu(d)\pi(x;d^2,1) &= \sum_{d\le\log^2x} \mu(d) \bigg( \frac{\li(x)}{d\phi(d)} + O\bigg( \frac x{\log^4x} \bigg) \bigg) + O\bigg( \sum_{\log^2x<d\le\sqrt x} \frac x{d^2} \bigg) \\
&= \li(x) \bigg( \sum_{d=1}^\infty \frac{\mu(d)}{d\phi(d)} + O\bigg( \sum_{d>\log^2x} \frac1{d\phi(d)} \bigg) \bigg) + O\bigg( \frac x{\log^2x} \bigg) \\
&= \li(x) \prod_p \bigg( 1 + \frac{\mu(p)}{p\phi(p)} + 0 + \cdots\bigg) + O\bigg( \frac x{\log^2x} \bigg),
\end{align*}
which implies the statement of the lemma.
\end{proof}

The proof of Theorem~\ref{mnc theorem} is straightforward if we use the Wirsing--Odoni method; below is a statement of this method~\cite[Proposition~4]{FinchMartinSebah} with one hypothesis simplified for our purposes.

\begin{proposition} \label{WOmethod}
Let~$f$ be a multiplicative function. Suppose that $0\le f(p^r)\le 1$ for all primes~$p$ and all positive integers~$r$. Suppose also that there exist real numbers $\omega>0$ and $0<\beta<1$ such that 
\[
\sum_{p\le P}f(p)=\omega\frac P{\log{P}}+O\bigg( \frac P{(\log{P})^{1+\beta}}\bigg)
\]
as $P\to\infty$. Then the product over all primes 
\[
C_f=\frac 1{\Gamma(\omega)}\prod_p\bigg(1+\frac{f(p)}p+\frac{f(p^2)}{p^2}+\frac{f(p^3)}{p^3}+\cdots\bigg) \bigg(1-\frac 1p\bigg)^\omega
\]
converges (hence is positive), and 
\[
\sum_{n\le N}f(n)=C_fN(\log{N})^{\omega-1}+O_f(N(\log{N})^{\omega-1-\beta})
\]
as $N\to\infty$.
\end{proposition}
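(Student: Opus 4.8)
The plan is to obtain Proposition~\ref{WOmethod} from \cite[Proposition~4]{FinchMartinSebah}, which states the Wirsing--Odoni method; since the hypotheses there have been ``simplified'' for our purposes, two things need checking: (i) that the hypotheses as stated above imply those of \cite[Proposition~4]{FinchMartinSebah}, and (ii) that the Euler product defining~$C_f$ converges. For~(i) the point is a routine partial summation. Because $\beta<1$, the estimate $\sum_{p\le P}f(p)=\omega P/\log P+O(P/(\log P)^{1+\beta})$ is equivalent to the Chebyshev-weighted version $\sum_{p\le P}f(p)\log p=\omega P+O(P/(\log P)^{\beta})$, which is the customary way to phrase Wirsing's hypothesis; comparing with the prime number theorem $\pi(P)=\li(P)+O\bigl(P\exp(-c\sqrt{\log P})\bigr)$ and using $1+\beta<2$ also gives $\sum_{p\le P}\bigl(f(p)-\omega\bigr)\ll P/(\log P)^{1+\beta}$, a bound I will use in~(ii).

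For~(ii), write $F(s)=\sum_{n\ge1}f(n)n^{-s}=\prod_pE_p(s)$ where $E_p(s)=\sum_{r\ge0}f(p^r)p^{-rs}$; since $0\le f(n)\le1$ this converges absolutely for $\Re s>1$, and $\mathcal G(s):=F(s)\zeta(s)^{-\omega}=\prod_pE_p(s)(1-p^{-s})^\omega$. Taking logarithms of the Euler factors and using $0\le f(p^r)\le1$ gives $\log E_p(1)+\omega\log(1-1/p)=(f(p)-\omega)/p+O(1/p^2)$, so that, once convergence is justified, $\Gamma(\omega)C_f=\exp\bigl(\sum_p(\log E_p(1)+\omega\log(1-1/p))\bigr)$. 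The contribution of the $O(1/p^2)$ terms converges absolutely, while $\sum_p(f(p)-\omega)/p$ converges conditionally by Abel summation from $\sum_{p\le P}(f(p)-\omega)\ll P/(\log P)^{1+\beta}$, since $\int_2^\infty\bigl(u(\log u)^{1+\beta}\bigr)^{-1}\,du<\infty$ when $1+\beta>1$. Hence $C_f$ is a well-defined positive real number, equal to $\mathcal G(1)/\Gamma(\omega)$.

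Granting~(i) and~(ii), the asymptotic $\sum_{n\le N}f(n)=C_fN(\log N)^{\omega-1}+O_f\bigl(N(\log N)^{\omega-1-\beta}\bigr)$ is exactly the conclusion of \cite[Proposition~4]{FinchMartinSebah}: Wirsing's mean-value theorem for nonnegative multiplicative functions produces the order of magnitude $N(\log N)^{\omega-1}$ from the prime-sum hypothesis, Odoni's refinement identifies the leading constant as $\mathcal G(1)/\Gamma(\omega)=C_f$, and the error term $O(N(\log N)^{\omega-1-\beta})$ is the quantitative version attached to the logarithmic power-saving $O(P/(\log P)^{1+\beta})$ in the hypothesis. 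I would present this step as a citation together with the verifications~(i) and~(ii) above.

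The main obstacle is that this is genuinely \emph{not} a black-box application of the Landau--Selberg--Delange contour method, which is why I would lean on the cited result rather than reprove it from scratch: that method requires a holomorphic continuation of $\mathcal G$ slightly past the line $\Re s=1$ together with polynomial growth on vertical lines, and our hypotheses supply neither --- the prime-sum estimate gives only that $\mathcal G$ extends continuously up to $\Re s=1$. What rescues the statement is the nonnegativity of~$f$: Wirsing's argument is real-variable in nature and exploits $f\ge0$ through an integral inequality rather than a contour shift, and the delicate bookkeeping inside it is precisely the propagation of the exponent~$\beta$ from the hypothesis to the $(\log N)^{-\beta}$ loss in the conclusion. (A self-contained contour proof is possible, but would first require Shiu-- or Halberstam--Richert-type upper bounds for $\sum_{n\le N}f(n)$ to furnish the missing control on vertical lines, which is more than the statement requires.)
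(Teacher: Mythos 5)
Your proposal is correct and matches the paper's treatment: the paper offers no proof of Proposition~\ref{WOmethod} at all, simply quoting it as \cite[Proposition~4]{FinchMartinSebah} with one hypothesis specialized, which is exactly the citation you lean on. Your additional verifications --- the partial-summation equivalence of the prime-sum hypotheses, the deduction $\sum_{p\le P}(f(p)-\omega)\ll P/(\log P)^{1+\beta}$ (using $1+\beta<2$ against $\li(P)-P/\log P$), and the conditional convergence of the Euler product via $\log E_p(1)+\omega\log(1-1/p)=(f(p)-\omega)/p+O(1/p^2)$ --- are all sound and only make explicit what the paper leaves implicit.
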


\begin{proof}[Proof of Theorem~\ref{mnc theorem}]
Fix $0<\ep<1$.
Let~$f$ be the indicator function of the set of integers~$n$ with the property that $\Z_n^\times$ is maximally non-cyclic. By Proposition~\ref{max non cyc characterization prop}, the function~$f$ is multiplicative with values on prime powers
\[
f(p^r) = \begin{cases}
1, &\text{if $p=2$ and $1\le r\le3$}, \\
1, &\text{if $p\ge3$ and $1\le r\le2$ and $p-1$ is squarefree}, \\
0, &\text{otherwise}.
\end{cases}
\]
By Lemma~\ref{sfsum}, this function satisfies the hypotheses of Proposition~\ref{WOmethod} with $\omega=\xi$ and $\beta=1-\ep$. We conclude from that proposition that
\[
\#\big\{ n\le x\colon \Z_n^\times \text{ is maximally non-cyclic} \} = \frac{Ax}{(\log x)^{1-\xi}} + O_\ep\bigg( \frac x{(\log x)^{2-\xi-\ep}} \bigg),
\]
where~$A$ is given by the convergent product
\begin{align*}
A&=\frac 1{\Gamma(\xi)}\prod_p\bigg(1+\frac{f(p)}{p}+\frac{f(p^2)}{p^2}+\frac{f(p^3)}{p^3}+\cdots\bigg) \bigg(1-\frac{1}{p}\bigg)^{\xi}\\
&=\frac 1{\Gamma(\xi)} {\bigg(1+\frac 12+\frac 14+\frac 18\bigg)} \lim_{x\to\infty}\bigg(\prod_{\substack{3\le p\le x\\p-1\text{ squarefree}}}\bigg(1+\frac 1p+\frac 1{p^2}\bigg) \prod_{2\le p\le x}\bigg(1-\frac 1p\bigg)^\xi\bigg)\\
&=\frac {15}{14\Gamma(\xi)}\lim_{x\to\infty}\bigg(\prod_{\substack{2\le p\le x\\p-1\text{ squarefree}}}\bigg(1+\frac 1p+\frac 1{p^2}\bigg) \prod_{2\le p\le x}\bigg(1-\frac 1p\bigg)^\xi\bigg)
\end{align*}
as claimed.
\end{proof}

\bibliography{mybibliography}
\bibliographystyle{plain}

\end{document}